\documentclass[11pt,a4paper]{article}

\usepackage{epsf,epsfig,amsfonts,amsgen,amsmath,amstext,amsbsy,amsopn,amsthm,cases,listings,color
}
\usepackage{ebezier,eepic}
\usepackage{color}
\usepackage{multirow}
\usepackage{epstopdf}
\usepackage{graphicx}
\usepackage{pgf,tikz}
\usepackage{mathrsfs}
\usepackage[marginal]{footmisc}
\usepackage{enumitem}
\usepackage[titletoc]{appendix}
\usepackage{booktabs}
\usepackage{url}
\usepackage{mathtools}

\usepackage{pgfplots}
\usepackage{authblk}
\usepackage{amssymb}

\usepackage{wasysym}

\usepackage{empheq}

\usepackage{dsfont}

\usepackage{subfigure}
\usepackage{mathrsfs}
\usepackage{wasysym} 
\usetikzlibrary{arrows}
\usepackage{aligned-overset}

 \usepackage[backref=page]{hyperref}

\allowdisplaybreaks[1]

\definecolor{uuuuuu}{rgb}{0.27,0.27,0.27}
\definecolor{sqsqsq}{rgb}{0.1255,0.1255,0.1255}

\setlength{\textwidth}{150mm} \setlength{\oddsidemargin}{7mm}
\setlength{\evensidemargin}{7mm} \setlength{\topmargin}{-5mm}
\setlength{\textheight}{245mm} \topmargin -18mm

\newtheorem{definition}{Definition} [section]
\newtheorem{theorem}[definition]{Theorem}
\newtheorem{lemma}[definition]{Lemma}
\newtheorem{proposition}[definition]{Proposition}

\newtheorem{corollary}[definition]{Corollary}
\newtheorem{conjecture}[definition]{Conjecture}
\newtheorem{claim}[definition]{Claim}
\newtheorem{problem}[definition]{Problem}

\newtheorem{fact}[definition]{Fact}
\newcommand{\uproduct}{\mathbin{\;{\rotatebox{90}{\textnormal{$\small\Bowtie$}}}}}

\newcommand{\blow}[2]{#1(\!(#2)\!)}

\setlength{\parindent}{0pt}
\parskip=8pt

\begin{document}
\title{\bf\Large A step towards a general density Corr\'{a}di--Hajnal Theorem}
\date{\today}
\author[1]{Jianfeng Hou\thanks{Research was supported by National Natural Science Foundation of China (Grant No. 12071077). Email: \texttt{jfhou@fzu.edu.cn}}}
\author[1]{Heng Li\thanks{Email: \texttt{hengli.fzu@gmail.com}}}
\author[2]{Xizhi Liu\thanks{Research was supported by ERC Advanced Grant 101020255. Email: \texttt{xizhi.liu@warwick.ac.uk}}}
\author[3]{Long-Tu Yuan\thanks{Research was supported by the National Natural Science Foundation of China (No. 12271169) and Science and Technology Commission of Shanghai Municipality, China (No. 22DZ2229014). \newline
Email: \texttt{ltyuan@math.ecnu.edu.cn}}}
\author[1]{Yixiao Zhang\thanks{Email: \texttt{fzuzyx@gmail.com}}}
\affil[1]{Center for Discrete Mathematics,
            Fuzhou University, Fujian, 350003, China}
\affil[2]{Mathematics Institute and DIMAP,
            University of Warwick, 
            Coventry, CV4 7AL, UK}
\affil[3]{School of Mathematical Sciences and Shanghai Key Laboratory of PMMP,
            \newline
            East China Normal University, 
            Shanghai, 200240, China}
\maketitle
\begin{abstract}
%
For a nondegenerate $r$-graph $F$, large $n$, and $t$ in the regime $[0, c_{F} n]$, where $c_F>0$ is a constant depending only on $F$, 
we present a general approach for determining the maximum number of edges in an $n$-vertex $r$-graph that does not contain $t+1$ vertex-disjoint copies of $F$. 
In fact, our method results in a rainbow version of the above result and includes a  characterization of the extremal constructions. 

Our approach applies to many well-studied hypergraphs (including graphs) such as 
the edge-critical graphs, the Fano plane,  the generalized triangles, hypergraph expansions, the expanded triangles, and hypergraph books. 
Our results extend old results of Simonovits~\cite{SI68} and Moon~\cite{Moon68} on complete graphs and can be viewed as a step towards a general density version of the classical Corr\'{a}di--Hajnal Theorem~\cite{CH63}.

\medskip

\textbf{Keywords:} Hypergraph Tur\'{a}n problems, the Corr\'{a}di--Hajnal Theorem, $F$-matching, stability, vertex-extendability. 
\end{abstract}
\section{Introduction}\label{SEC:Introduction}
\subsection{Motivation}\label{SUBSEC:Motivation}
Fix an integer $r\ge 2$, an $r$-graph $\mathcal{H}$ is a collection of $r$-subsets of some finite set $V$. We identify a hypergraph $\mathcal{H}$ with its edge set and use $V(\mathcal{H})$ to denote its vertex set. The size of $V(\mathcal{H})$ is denoted by $v(\mathcal{H})$. 

Given two $r$-graphs $F$ and $\mathcal{H}$ we use $\nu(F, \mathcal{H})$ to denote the maximum of $k\in \mathbb{N}$ such that there exist $k$ vertex-disjoint copies of $F$ in $\mathcal{H}$. We call $\nu(F, \mathcal{H})$ the \textbf{$F$-matching number} of $\mathcal{H}$.
If $F = K_{r}^{r}$ (i.e. an edge), then 
we use $\nu(\mathcal{H})$ to represent $\nu(F, \mathcal{H})$ for simplicity. 
The number $\nu(\mathcal{H})$ is also known as the \textbf{matching number} of $\mathcal{H}$. 

The study of the following problem  encompasses several central topics in Extremal Combinatorics.
Given an $r$-graph $F$ and integers $n, t\in \mathbb{N}$:
\begin{align*}
    \textit{What kinds of constraints on an $n$-vertex $r$-graph $\mathcal{H}$ force it to satisfy $\nu(F, \mathcal{H}) \ge t+1$?}
\end{align*}

For $r=2$ and $F = K_{2}$, 
the celebrated Erd\H{o}s--Gallai Theorem~\cite{EG59} states that for
all integers $n, \ell \in \mathbb{N}$ with $t+1 \le n/2$ and for every $n$-vertex graph $G$,
\begin{align*}
|G| > \max \left\{\binom{2t+1}{2}, \binom{n}{2}-\binom{n-t}{2}\right\}
\quad\Rightarrow\quad \nu(G) \ge t+1.      
\end{align*}
Here we use the symbol $\Rightarrow$ to indicate that the constraint on the left side forces the conclusion on the right side. 

Extending the Erd\H{o}s--Gallai Theorem to $r$-graphs for $r\ge 3$ is a major open problem, and the following conjecture of Erd\H{o}s is still open in general
(see e.g. \cite{Frankl13,Frankl17A,Frankl17B, HLS12} for some recent progress on this topic). 

\begin{conjecture}[Erd\H{o}s~\cite{Erdos65}]\label{CONJ:Erdos-matching}
Suppose that $n, t, r \in \mathbb{N}$ satisfy $r\ge 3$ and $t+1 \le n/r$. 
Then for every $n$-vertex $r$-graph $\mathcal{H}$,
\begin{align*}
    |\mathcal{H}|> \max\left\{\binom{r(t+1)-1}{r}, \binom{n}{r}-\binom{n-t}{r}\right\}
    \quad\Rightarrow\quad \nu(\mathcal{H}) \ge t+1. 
\end{align*}
\end{conjecture}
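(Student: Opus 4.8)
Conjecture~\ref{CONJ:Erdos-matching} is the Erd\H{o}s Matching Conjecture and is open in general, so what follows is an attack plan rather than a complete argument. It is convenient to argue the contrapositive: assuming $\nu(\mathcal{H})\le t$ for an $n$-vertex $r$-graph $\mathcal{H}$, one wants $|\mathcal{H}|\le \max\{\binom{r(t+1)-1}{r},\,\binom{n}{r}-\binom{n-t}{r}\}$, the two bounds being attained by the \emph{clique family} $\mathcal{C}_{2}$ of all $r$-subsets of a fixed $(r(t+1)-1)$-set and the \emph{cover family} $\mathcal{C}_{1}$ of all $r$-subsets meeting a fixed $t$-set. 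The first move is \emph{shifting}: a left-compression $S_{ij}$ never increases the matching number, so it suffices to treat \emph{shifted} $\mathcal{H}$, and shifted families carry enough structure (canonical matchings, a well-behaved ``kernel'' of vertices lying in essentially every large matching) to give the rest a fighting chance.

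For shifted $\mathcal{H}$ I would induct on $n$ with $t$ fixed, splitting on the largest vertex $n$: write $|\mathcal{H}|=|\mathcal{H}_{\bar n}|+|\mathcal{H}_{n}|$, where $\mathcal{H}_{\bar n}$ is the $r$-graph on $[n-1]$ of edges avoiding $n$ and the link $\mathcal{H}_{n}$ is an $(r-1)$-graph on $[n-1]$. One always has $\nu(\mathcal{H}_{\bar n})\ge\nu(\mathcal{H})-1$, and shiftedness says that when $\mathcal{H}_{\bar n}$ has a $t$-matching $M$, no edge of $\mathcal{H}_{n}$ can avoid $V(M)$ (otherwise that edge together with $\{n\}$ extends $M$ to a $(t+1)$-matching of $\mathcal{H}$), so crudely $|\mathcal{H}_{n}|\le\binom{n-1}{r-1}-\binom{n-1-rt}{r-1}$. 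Combined with the Pascal identity $\binom{n}{r}-\binom{n-t}{r}=\bigl[\binom{n-1}{r}-\binom{n-1-t}{r}\bigr]+\bigl[\binom{n-1}{r-1}-\binom{n-1-t}{r-1}\bigr]$, this would close the induction in the large-$n$ regime \emph{provided} the link bound is sharpened so that $\mathcal{H}_{n}$ is forced to meet a set of size $t$ rather than $rt$; in the small-$n$ regime one instead compares the totals against $\binom{r(t+1)-1}{r}$. The extremal characterization should then drop out of the equality analysis: either the link is a full ``meet a $t$-set'' family and $\mathcal{H}_{\bar n}$ is itself extremal of cover type, or $v(\mathcal{H})\le r(t+1)-1$ and $\mathcal{H}$ is a clique.

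The main obstacle is exactly this sharpening. Passing from ``$\mathcal{H}_{n}$ meets $rt$ vertices'' to ``$\mathcal{H}_{n}$ meets $t$ vertices, or is small'' amounts to a precise description of the kernel of a shifted family with $\nu\le t$ as a function of its density, and that is where the difficulty concentrates. For $t=1$ it is the Erd\H{o}s--Ko--Rado / Hilton--Milner phenomenon and is tractable; for general $t$ it is open in the intermediate range of $n$ --- roughly $n$ of order $2rt$, where $\binom{r(t+1)-1}{r}$ and $\binom{n}{r}-\binom{n-t}{r}$ agree up to lower-order terms --- and even the base cases near $n=r(t+1)$ are delicate. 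Frankl's refinement of the shifting argument~\cite{Frankl13} pushes the cover regime down to about $n\ge(2t+1)r$, and the clique regime near $r(t+1)$ is also understood, but the two arguments do not currently meet. I expect that bridging them requires either a genuinely sharper stability statement for shifted families (controlling the kernel exactly, not merely up to constant factors) or a detour through the fractional matching relaxation, where the corresponding bound is known and clean, followed by a rounding/transference step back to the integral problem --- that rounding step being, to my mind, the most promising but least routine part of the plan and the likeliest place for it to stall.
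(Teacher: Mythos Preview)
You correctly recognize that this is the Erd\H{o}s Matching Conjecture and that it remains open in general; the paper does the same. The statement is labeled \texttt{conjecture} and is presented purely as background motivation --- the surrounding text explicitly says the conjecture ``is still open in general'' and cites recent partial progress. No proof is offered in the paper, so there is nothing to compare your attack plan against.

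Your outline of the shifting approach and the identification of the intermediate-$n$ regime as the obstruction is a reasonable summary of the state of the art, but since the paper makes no attempt to prove Conjecture~\ref{CONJ:Erdos-matching}, the appropriate response here is simply that no proof is expected or given.
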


For general $r$-graphs $F$, determining the minimum number of edges in an $n$-vertex $r$-graph $\mathcal{H}$ that guarantees $\nu(F,\mathcal{H}) \ge 1$ is closely related to the Tur\'{a}n problem. 
For our purpose in this work, let us introduce the following notions. 

Fix an $r$-graph $F$, we say another $r$-graph $\mathcal{H}$ is \textbf{$F$-free} if $\nu(F,\mathcal{H}) = 0$. In other words, $\mathcal{H}$ does not contains $F$ as a subgraph. 
%
%
The \textbf{Tur\'{a}n number} $\mathrm{ex}(n,F)$ of $F$ is the maximum number of edges in an $F$-free $r$-graph on $n$ vertices.
The \textbf{Tur\'{a}n density} of $F$ is defined as $\pi(F) := \lim_{n\to \infty}\mathrm{ex}(n,F)/\binom{n}{r}$,
the existence of the limit follows from a simple averaging argument of Katona, Nemetz, and Simonovits~\cite{KNS64} (see Proposition~\ref{PROP:KNS}).

An $r$-graph $F$ is called \textbf{nondegenerate} if $\pi(F) > 0$.
We use $\mathrm{EX}(n,F)$ to denote the collection of all $n$-vertex $F$-free $r$-graphs with exactly $\mathrm{ex}(n,F)$ edges, and
call members in $\mathrm{EX}(n,F)$ the \textbf{extremal constructions} of $F$. 
The study of $\mathrm{ex}(n,F)$ and $\mathrm{EX}(n,F)$ is a central topic in Extremal Combinatorics. 

Much is known when $r=2$, 
and one of the earliest results in this regard is Mantel's theorem~\cite{Mantel07}, which states that $\mathrm{ex}(n,K_3) = \lfloor n^2/4 \rfloor$. 
For every integer $\ell\ge 2$ let $T(n,\ell)$ denote the balanced complete $\ell$-partite graph on $n$ vertices. Here, balanced means that the sizes of any two parts differ by at most one.
We call $T(n,\ell)$ the \textbf{Tur\'{a}n graph}, and
use $t(n,\ell)$ to denote the number of edges in $T(n,\ell)$. 
The seminal Tur\'{a}n Theorem states that $\mathrm{EX}(n, K_{\ell+1}) = \{T(n,\ell)\}$ for all integers $n \ge \ell \ge 2$. 
%
Later, Tur\'{a}n's theorem was extended to general graphs $F$ in 
the celebrated Erd\H{o}s--Stone--Simonovits Theorem~\cite{ES66,ES46}, which says that
$\pi(F) = \left(\chi(F)-2\right)/\left(\chi(F)-1\right)$.
Here $\chi(F)$ is the \textbf{chromatic number} of~$F$.

For $r\ge 3$, determining $\mathrm{ex}(n, F)$ or even $\pi(F)$ for an $r$-graph $F$ is known to be notoriously hard in general.  
The problem of determining $\pi(K_{\ell}^{r})$ raised by Tur\'{a}n~\cite{TU41}, 
where $K_{\ell}^{r}$ is the complete $r$-graph on $\ell$ vertices, is still wide open for all $\ell>r\ge 3$. 
Erd\H{o}s offered $\$ 500$ for the determination of any $\pi(K_{\ell}^{r})$ with $\ell > r \ge 3$ and $\$ 1000$ for all $\pi(K_{\ell}^{r})$ with $\ell > r \ge 3$.
We refer the reader to an excellent survey~\cite{KE11} by Keevash for related results before 2011. 

Another related central topic in Extremal Combinatorics is the Factor Problem. 
We say an $r$-graph $\mathcal{H}$ has an \textbf{$F$-factor} if it contains a collection of vertex-disjoint copies of $F$ that covers all vertices in $V(\mathcal{H})$. 
In other words, $\nu(F, \mathcal{H}) = \frac{v(\mathcal{H})}{v(F)}$ (in particular, $v(F) \mid v(\mathcal{H})$). 

For an $r$-graph $\mathcal{H}$ and a vertex $v\in V(\mathcal{H})$
the \textbf{degree} $d_{\mathcal{H}}(v)$ of $v$ in $\mathcal{H}$ is the number of edges in $\mathcal{H}$ containing $v$.
We use $\delta(\mathcal{H})$, $\Delta(\mathcal{H})$, and $d(\mathcal{H})$ to denote the \textbf{minimum degree}, the \textbf{maximum degree}, and the \textbf{average degree} of $\mathcal{H}$, respectively.
We will omit the subscript $\mathcal{H}$ if it is clear from the context. 

A classical theorem of Corr\'{a}di and Hajnal~\cite{CH63} implies the following result for $K_3$. 

\begin{theorem}[Corr\'{a}di--Hajnal~\cite{CH63}]\label{THM:CH-min-deg}
    Suppose that $n, t \in \mathbb{N}$ are integers with $t\le n/3$. 
    Then for every $n$-vertex graph $G$,
    \begin{align*}
        \delta(G) \ge t + \left\lfloor \frac{n-t}{2} \right\rfloor 
        \quad\Rightarrow\quad
        \nu(K_3, G) \ge t. 
    \end{align*}
    In particular, if $3 \mid n$, then every $n$-vertex graph $G$ with $\delta(G) \ge 2n/3$ contains a $K_3$-factor. 
\end{theorem}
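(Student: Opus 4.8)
The plan is to deduce the theorem from the classical Corr\'{a}di--Hajnal theorem on vertex-disjoint cycles (every $n$-vertex graph $G$ with $n\ge 3k$ and $\delta(G)\ge 2k$ contains $k$ vertex-disjoint cycles), upgrading the cycles to triangles by a packing argument. The ``in particular'' clause is then immediate: when $3\mid n$ and $\delta(G)\ge 2n/3$, apply the theorem with $k=n/3$ to obtain $n/3$ vertex-disjoint cycles; they occupy at least $3\cdot(n/3)=n$ vertices, so each has length exactly $3$ and $G$ has a $K_3$-factor. (Consistently, $t+\lfloor(n-t)/2\rfloor=2t$ when $n=3t$.) The degree bound should be regarded as tight through the complete join $K_{t-1}\vee T(n-t+1,2)$: it contains no $t$ vertex-disjoint triangles, since every triangle in it meets the clique part $K_{t-1}$.

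For the general statement I would argue by contradiction via an extremal triangle packing. Let $\mathcal{T}=\{T_1,\dots,T_m\}$ be a collection of vertex-disjoint triangles in $G$ with $m$ maximum, suppose $m\le t-1$, and among all such collections choose one maximizing $e(G[R])$, where $R:=V(G)\setminus\bigcup_i V(T_i)$. Then $|R|=n-3m\ge n-3(t-1)\ge 3$, and maximality of $m$ forces $G[R]$ to be triangle-free, hence $\delta(G[R])\le |R|/2$. Thus every $v\in R$ sends at least $\delta(G)-|R|/2$ edges into $V(\mathcal{T})$; feeding in the degree hypothesis together with $|R|=n-3m$, a short computation shows this is more than $m$, so $v$ has two neighbours in some $T_i$ (in fact the count is about $\tfrac{t+3m}{2}$, which yields such pairs in many of the $T_i$).

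The heart of the matter is an exchange (rotation) step. If $v\in R$ has two neighbours $a,b$ in a triangle $T_i=\{a,b,c\}$, then swapping $T_i$ for $\{v,a,b\}$ yields another maximum packing whose residual set is $R'=(R\setminus\{v\})\cup\{c\}$; if $G[R']$ contains a triangle, then the $m-1$ untouched triangles, the triangle $\{v,a,b\}$, and that new triangle together form $m+1$ vertex-disjoint triangles, contradicting the choice of $m$. Hence it suffices to show that the degree lower bound on the vertices of $R$ always creates an improving rotation (or a triangle inside $R$ outright). Proving this -- by running through the possible neighbourhood patterns of two non-adjacent vertices $u,v\in R$ across the $T_i$ and using the $e(G[R])$-maximality to rule out the leftover configurations -- is the step I expect to be the real obstacle; reconciling the edge count between $R$ and $\mathcal{T}$ with the rotations available is exactly what makes Corr\'{a}di--Hajnal a genuine theorem rather than a counting exercise. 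If one would rather black-box this step, the bound can alternatively be extracted from the partial $K_3$-tiling results related to the Corr\'{a}di--Hajnal and Hajnal--Szemer\'{e}di theorems.
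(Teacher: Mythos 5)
The paper itself contains no proof of Theorem~\ref{THM:CH-min-deg}: it is quoted from~\cite{CH63} as classical background for the density results, so your argument has to stand on its own. The part that does work is the ``in particular'' clause: deducing the triangle factor from the cycle form of Corr\'{a}di--Hajnal when $3\mid n$ and $\delta(G)\ge 2n/3$ is correct, and $K_{t-1}\uproduct T(n-t+1,2)$ is indeed the relevant extremal configuration to keep in mind.

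For the main implication, however, there is a genuine gap, and you say so yourself: after setting up the standard machinery (a maximum triangle packing, the residual set $R$ chosen to maximize $e(G[R])$, and the rotation step), the decisive claim --- that the degree hypothesis always produces an augmenting exchange or a triangle inside $R$ --- is exactly what you leave unproved, and the proposed fallback of citing ``partial $K_3$-tiling results related to Corr\'{a}di--Hajnal and Hajnal--Szemer\'{e}di'' is circular, since that is precisely the statement to be established. Two further points. First, your counting step is not justified as written: triangle-freeness of $G[R]$ gives, via Mantel, only $e(G[R])\le |R|^2/4$, i.e.\ a bound on the average (equivalently the minimum) degree of $G[R]$; it does not give that \emph{every} $v\in R$ has at most $|R|/2$ neighbours in $R$, so the assertion that every $v\in R$ sends at least $\delta(G)-|R|/2$ edges into $V(\mathcal{T})$ does not follow. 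Second, a parity warning: with the floor as printed, the inequality can be met by triangle-free graphs when $n-t$ is odd --- your own extremal example shows this, e.g.\ for $t=1$, $n=6$ the graph $K_{3,3}=T(6,2)$ has $\delta=3=1+\lfloor 5/2\rfloor$ and no triangle --- so the degree bound must be read with $\lceil (n-t)/2\rceil$ (the Hajnal--Szemer\'{e}di form quoted just below it), and any complete argument has to track this rounding; in particular, no amount of work can close your sketch for the literal floor version in that parity class.
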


Later, 
Theorem~\ref{THM:CH-min-deg} was extended to all complete graphs in 
the classical Hajnal--Szemer\'{e}di Theorem~\cite{HS70}, which implies that 
for all integers $n\ge \ell \ge 2$, $t \le \lfloor n/(\ell+1) \rfloor$, and for every $n$-vertex graph $G$,
\begin{align*}
    \delta(G) \ge  t + \left\lfloor \frac{\ell-1}{\ell}(n-t) \right\rfloor 
        \quad\Rightarrow\quad
        \nu(K_{\ell+1}, G) \ge t. 
\end{align*}
%

%
For further related results, we refer the reader to a survey~\cite{KO09} by K\"{u}hn and Osthus. 

In this work, we are interested in density constraints that force an $r$-graph to have large $F$-matching number, where $F$ is a nondegenerate $r$-graph. 
Since our results are closely related to the Tur\'{a}n problem of $F$, 
we abuse the use of notation by letting $\mathrm{ex}\left(n, (t+1)F\right)$ denote the maximum number of edges in an $n$-vertex $r$-graph $\mathcal{H}$ with $\nu(F, \mathcal{H}) < t+1$. 

Given two $r$-graphs $\mathcal{G}$ and $\mathcal{H}$ whose vertex sets are disjoint, 
we define the \textbf{join} $\mathcal{G} \uproduct \mathcal{H}$ of $\mathcal{G}$ and $\mathcal{H}$ to be the $r$-graph obtained from $\mathcal{G}\sqcup \mathcal{H}$ (the vertex-disjoint union of $\mathcal{G}$ and $\mathcal{H}$) by adding all $r$-sets that have nonempty intersection with both $V(\mathcal{G})$ and $V(\mathcal{H})$. 
For simplicity, we define 
the join of an $r$-graph $\mathcal{H}$ and a family $\mathcal{F}$ of $r$-graphs as 
$\mathcal{H} \uproduct \mathcal{F} := \left\{\mathcal{H} \uproduct \mathcal{G} \colon \mathcal{G} \in \mathcal{F}\right\}$. 

Erd\H{o}s~\cite{Erdos62} considered the density problem for $K_3$ and proved the following result. 

\begin{theorem}[Erd\H{o}s~\cite{Erdos62}]\label{THM:Erdos-disjoint-triangle}
    Suppose that $n, t\in \mathbb{N}$ and $t\le \sqrt{n/400}$. 
    Then 
    \begin{align*}
        \mathrm{EX}\left(n, (t+1)K_3\right) = \{K_{t} \uproduct T(n-t,2)\}. 
    \end{align*}
\end{theorem}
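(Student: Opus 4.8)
\textbf{Proof proposal for Theorem~\ref{THM:Erdos-disjoint-triangle}.}

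The plan is to argue by induction on $t$, using Tur\'{a}n's theorem (the case $t=0$, where $\mathrm{EX}(n,K_3)=\{T(n,2)\}$) as the base. So suppose $t \ge 1$, let $G$ be an $n$-vertex $K_3$-free-of-multiplicity-$(t+1)$ graph (i.e.\ $\nu(K_3,G) \le t$) with $|G| = \mathrm{ex}(n,(t+1)K_3)$, and first establish the lower bound $|G| \ge t(n-t,2) + t(n-t) + \binom{t}{2}$, which is immediate since $K_t \uproduct T(n-t,2)$ has this many edges and has no $t+1$ disjoint triangles. For the upper bound, if $G$ is itself $K_3$-free we are done by Tur\'{a}n, so assume $G$ contains a triangle $S$ on vertex set $\{x,y,z\}$. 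Then $\nu(K_3, G - S) \le t-1$, so by the induction hypothesis $|G-S| \le t(n-t-2,2) + (t-1)(n-t-2) + \binom{t-1}{2}$, with equality characterizing $G-S$. The number of edges incident to $\{x,y,z\}$ is at most $3(n-3) - 3 + 3 = 3n-9$ trivially, but this crude bound loses too much; instead I would bound $e(\{x,y,z\}, V(G)\setminus\{x,y,z\})$ carefully: since $G$ has no $t+1$ disjoint triangles, after deleting a near-extremal $K_3$-matching of size $t-1$ from $G - S$ one cannot complete another triangle using $\{x,y,z\}$ together with the remaining (triangle-free) part, which forces $\{x,y,z\}$ to have small joint neighbourhood structure outside the matched vertices.

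The cleaner route, which I expect to be the intended one, is to combine a \emph{global} counting bound with a \emph{stability} step. First show directly that $\mathrm{ex}(n,(t+1)K_3) \le \mathrm{ex}(n-t, K_3) + \binom{t}{2} + t(n-t) = t(n-t,2) + \binom{t}{2} + t(n-t)$: take any $G$ with $\nu(K_3,G)\le t$; greedily remove vertices of triangles — more precisely, repeatedly pick a triangle and delete one vertex from it — and observe that after deleting at most $t$ vertices (one per triangle in a maximum $K_3$-matching is not quite right, so instead: let $W$ be a minimum set of vertices meeting every triangle; then $|W| \le 3t$ but in fact one shows $|W|$ can be taken $\le t$ using that any triangle-cover of an extremal configuration is small — this needs the $t \le \sqrt{n/400}$ hypothesis to rule out the ``clique'' competitor $\binom{3t+2}{2}$ appearing in the Erd\H{o}s--Gallai-type bound). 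Edges within $W$ contribute at most $\binom{|W|}{2}$, edges from $W$ to the rest at most $|W|(n-|W|)$, and the triangle-free remainder at most $\mathrm{ex}(n-|W|,K_3)$; optimizing over $|W| \le t$ (the function is increasing in $|W|$ in this range, since $t$ is tiny compared to $n$) gives the bound. Then an argument shows equality forces $|W|=t$, the graph on $W$ complete, all $W$-to-rest edges present, and the remainder in $\mathrm{EX}(n-t,K_3) = \{T(n-t,2)\}$.

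The main obstacle is the step bounding the minimum triangle-cover size: one must show that if $\nu(K_3,G)\le t$ and $|G|$ is close to extremal, then a \emph{single} vertex per triangle suffices to get down to $t$ deletions, i.e.\ the extremal structure cannot ``hide'' many disjoint triangles in a small dense clique. Concretely, if $G$ contained a clique (or dense piece) on about $3t+2$ vertices, that alone has $\binom{3t+2}{2}$ edges with matching number $t$, and one must verify that for $t \le \sqrt{n/400}$ this is strictly smaller than $t(n-t,2)+\binom{t}{2}+t(n-t) \approx n^2/4$; this is where the quantitative hypothesis $t \le \sqrt{n/400}$ is used, and the constant $400$ is presumably chosen to make a clean slack in the comparison $\binom{3t+2}{2} \le \tfrac{(3t+2)^2}{2} < \tfrac{9}{2}\cdot\tfrac{n}{400}\cdot(\text{something}) \ll n^2/4$ together with room for the lower-order error terms in the stability analysis. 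Once the cover size is pinned to $t$, the characterization of equality is routine: extremality of each layer (within $W$, between $W$ and the rest, and the triangle-free remainder) must be tight simultaneously, and Tur\'{a}n's uniqueness finishes it.
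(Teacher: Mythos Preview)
The paper does not contain a proof of Theorem~\ref{THM:Erdos-disjoint-triangle}: it is stated as a classical result of Erd\H{o}s and cited to~\cite{Erdos62}. The paper's own contribution in this direction is Theorem~\ref{THM:edge-critical} (the special case $F=K_3$), which yields $\mathrm{EX}(n,(t+1)K_3)=\{K_t\uproduct T(n-t,2)\}$ for all sufficiently large $n$ and $t\le c_{K_3}n$, via the general smoothness/boundedness machinery of Theorem~\ref{THM:Main-simple}. That route is entirely different from yours: rather than covering triangles, the paper isolates the set $L$ of \emph{heavy} vertices (those of degree exceeding $d(n-t,K_3)+\frac{1-\pi(K_3)}{2m}\binom{n-t}{1}$), shows $|L|\le t$ using the boundedness hypothesis, and then handles the low-degree remainder with Lemma~\ref{LEMMA:Case1-small-maxdeg}.

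As for your proposal itself, there is a genuine gap at the point you flag as ``the main obstacle''. You assert that one can take a triangle vertex-cover $W$ with $|W|\le t$, but for an arbitrary graph with $\nu(K_3,G)\le t$ the minimum triangle cover can be as large as $3t$, and nothing you have written brings it down to $t$. Your justification --- that the clique competitor $K_{3t+2}$ has only $\binom{3t+2}{2}\ll n^2/4$ edges when $t\le\sqrt{n/400}$ --- only compares two specific constructions; it does not show that an \emph{arbitrary} near-extremal $G$ has a small cover. Indeed, the conclusion ``$|W|=t$, $G[W]$ complete, all $W$-to-rest edges present, remainder is $T(n-t,2)$'' is precisely the theorem, so deducing $|W|\le t$ at the outset is circular. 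The actual work in Erd\H{o}s's argument (and in the analogous step of the paper's proof, Claim~\ref{CLAIM:ell<t+1}) is to show that too many ``bad'' vertices would allow one to build $t+1$ disjoint triangles; your sketch does not supply this step. The induction approach you outline first is, as you note, too lossy with the trivial bound $3(n-3)$ on cross-edges, and you do not indicate how to repair it.
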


Later, Moon~\cite{Moon68} extended it to all complete graphs. 

\begin{theorem}[Moon~\cite{Moon68}]\label{THM:Moon-disjoint-clique}
    Suppose that integers $n, t, \ell\in \mathbb{N}$ satisfy $\ell \ge 2$, $t\le \frac{2n-3\ell^2+2\ell}{\ell^3+2\ell^2+\ell+1}$, and $\ell \mid (n-t)$. 
    Then 
    \begin{align}\label{equ:Moon}
        \mathrm{EX}\left(n, (t+1)K_{\ell+1}\right) = \left\{K_{t}\uproduct T(n-t,\ell)\right\}.  
    \end{align}
\end{theorem}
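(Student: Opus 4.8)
The plan is to prove Theorem~\ref{THM:Moon-disjoint-clique} by induction on $t$, using Theorem~\ref{THM:Erdos-disjoint-triangle}'s underlying philosophy but carried out for general $\ell$. The base case $t=0$ is just the Tur\'{a}n Theorem, $\mathrm{EX}(n, K_{\ell+1}) = \{T(n,\ell)\}$, which holds whenever $\ell \mid n$ (and more generally). So assume $t \ge 1$ and that the statement holds for $t-1$ (with the parameter range adjusted accordingly, which is why the bound on $t$ is linear in $n$ with the stated slope). Let $\mathcal{H} = G$ be an $n$-vertex graph with $\nu(K_{\ell+1}, G) \le t$ and $|G| \ge t(n-t,\ell) + \binom{t}{2} + t(n-t) = |K_t \uproduct T(n-t,\ell)|$; the goal is to show $G = K_t \uproduct T(n-t,\ell)$.

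First I would establish that $G$ must contain at least one copy of $K_{\ell+1}$: otherwise $|G| \le \mathrm{ex}(n, K_{\ell+1}) = t(n,\ell)$, and a short computation shows $t(n,\ell) < t(n-t,\ell) + \binom{t}{2} + t(n-t)$ for the given range of $t$ (essentially because moving $t$ vertices into a clique joined to everything beats the marginal gain from keeping them inside the Tur\'{a}n graph, once $n$ is large relative to $t$). Fix such a copy $K$ on vertex set $S$ with $|S| = \ell+1$. Now comes the main step: I want to find a \emph{single} vertex $v$ whose deletion drops the $K_{\ell+1}$-matching number, i.e. $\nu(K_{\ell+1}, G - v) \le t-1$, and whose degree is close to $n$. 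The natural candidate is a vertex of maximum degree inside a cleverly chosen copy of $K_{\ell+1}$. The key counting fact (the analogue of what drives Erd\H{o}s's and Moon's arguments) is: if $G$ has $\nu(K_{\ell+1}, G) = t$, one can greedily extract $t$ vertex-disjoint copies, and among all such families choose one, say on vertex set $W$ with $|W| = t(\ell+1)$, maximizing the number of edges inside $W$ (or minimizing edges leaving $W$, or some such extremal choice). One then argues that $G - W$ is $K_{\ell+1}$-free, so $|G - W| \le t(n - t(\ell+1), \ell)$, and the edges between $W$ and $V \setminus W$ plus inside $W$ are bounded by roughly $\binom{t(\ell+1)}{2} + t(\ell+1)(n - t(\ell+1))$. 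Comparing with the required lower bound on $|G|$ forces, for $t$ in the stated range, that almost all of these cross-edges are present and that the structure on $V \setminus W$ is essentially $T(n - t(\ell+1), \ell)$. From there, I would show that $t$ vertices of $W$ (one "extra" vertex from each of the $t$ copies) can be pulled out to form the clique $K_t$ joined to everything, with the remaining $t\ell$ vertices of $W$ merging cleanly into the $\ell$-partite structure, yielding $K_t \uproduct T(n-t, \ell)$ exactly; the tightness of the edge count kills all slack and pins down the extremal graph uniquely.

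An alternative, and probably cleaner, route is the deletion/induction argument directly: find a vertex $v$ with $d_G(v) \ge n - 1 - \text{(something small)}$ and $\nu(K_{\ell+1}, G - v) \le t - 1$, apply the inductive hypothesis to $G - v$ on $n - 1$ vertices to conclude $G - v = K_{t-1} \uproduct T(n-1-(t-1), \ell) = K_{t-1} \uproduct T(n-t, \ell)$, and then check that the only way to add $v$ back with enough edges while keeping $\nu(K_{\ell+1}, \cdot) \le t$ is to make $v$ dominating, producing $K_t \uproduct T(n-t,\ell)$. To find such a $v$: since $G$ contains $K_{\ell+1}$, take any copy on $S$; if some $v \in S$ has $\nu(K_{\ell+1}, G-v) \le t-1$ and large degree we are done, and if not, every vertex of $S$ lies in a $K_{\ell+1}$-matching of size $t$ avoiding... — this is exactly where care is needed. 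The divisibility hypothesis $\ell \mid (n-t)$ is used to ensure $T(n-t,\ell)$ is the relevant balanced object and that the inductive step stays within the divisibility-compatible family; one must track that $\ell \mid (n - 1 - (t-1)) = n - t$, which indeed holds, so the induction is consistent.

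The hard part will be the structural/stability step: converting the edge-count inequality $|G| \ge |K_t \uproduct T(n-t,\ell)|$ together with $\nu(K_{\ell+1}, G) \le t$ into exact structure, rather than merely approximate structure. In particular, one must rule out "spread out" near-extremal configurations where the $t$ deficiency-vertices are not a clean dominating clique but are distributed among the parts, and one must handle the boundary behavior of the Tur\'{a}n graph (the $\lfloor\cdot\rfloor$ in part sizes). This is precisely why the admissible range of $t$ is only linear in $n$ (Moon's bound $t \le \frac{2n - 3\ell^2 + 2\ell}{\ell^3 + 2\ell^2 + \ell + 1}$ comes from making the relevant quadratic-in-$t$ gap between $|K_t \uproduct T(n-t,\ell)|$ and the next-best $K_{\ell+1}$-free-plus-clique construction strictly positive): for larger $t$ the extremal construction changes (a larger clique or a mix), so the whole argument is valid only below that threshold, and I would verify that the threshold is exactly what the inductive comparison requires at each step.
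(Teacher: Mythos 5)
First, a point of context: the paper itself contains no proof of Theorem~\ref{THM:Moon-disjoint-clique}; it is quoted from Moon~\cite{Moon68} as background, and the paper's own machinery (Theorem~\ref{THM:Main-simple}, specialized as in Theorem~\ref{THM:edge-critical}) recovers the conclusion only for sufficiently large $n$ and $t\le c_F n$ with an unspecified constant, not for Moon's explicit threshold $t\le\frac{2n-3\ell^2+2\ell}{\ell^3+2\ell^2+\ell+1}$ and all $n$. So your attempt has to stand on its own, and as written it is a plan rather than a proof: the two decisive steps are exactly the ones you defer. The deletion/induction route hinges on producing a vertex $v$ of near-full degree with $\nu(K_{\ell+1},G-v)\le t-1$; you acknowledge ``this is exactly where care is needed'' and never supply the argument. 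Likewise, converting the hypotheses $|G|\ge\binom{n}{2}-\binom{n-t}{2}+t(n-t,\ell)$ and $\nu(K_{\ell+1},G)\le t$ into the \emph{exact} structure $K_t\uproduct T(n-t,\ell)$ is flagged as ``the hard part'' and left undone. These are not routine verifications; they are the content of Moon's theorem, and the stated threshold on $t$ emerges precisely from that analysis.

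Moreover, the one counting step you do sketch does not yield what you claim. Taking $W$ to be the vertex set of $t$ disjoint copies, so $|W|=t(\ell+1)$, the decomposition gives $|G|\le\binom{|W|}{2}+|W|\,(n-|W|)+t\bigl(n-|W|,\ell\bigr)$, and this upper bound exceeds $\bigl|K_t\uproduct T(n-t,\ell)\bigr|$ by roughly $tn$: the edges meeting $W$ need only amount to about $t\ell n$ out of a possible $t(\ell+1)n$ to satisfy the lower bound on $|G|$. Hence comparing with $|G|\ge\bigl|K_t\uproduct T(n-t,\ell)\bigr|$ does \emph{not} force ``almost all of these cross-edges are present,'' nor that $G-W$ is essentially Tur\'{a}n (for $t=\Theta(n)$ the slack is $\Theta(n^2)$, so even approximate structure does not follow without a genuine stability or swapping argument, e.g.\ bounding how many outside vertices can be completely joined to a fixed copy and exchanging vertices between $W$ and its complement). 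Until the high-degree deletable vertex is actually produced, or the exact-structure step is carried out with the correct refined counting (which is also where the precise bound on $t$ must be verified rather than asserted), the proposal has a genuine gap and does not constitute a proof of the theorem.
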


It is worth mentioning that, in fact, for $\ell = 2$, Moon proved that the constraint $\ell \mid (n-t)$ can be removed, and moreover, (\ref{equ:Moon}) holds for all $t \le \frac{2n-8}{9}$.
For $\ell \ge 3$, Moon remarked in~\cite{Moon68} that there are some difficulties to remove the constraint  $\ell \mid (n-t)$. 
Nevertheless, the divisibility constraint is not required in our results.
Meanwhile, Simonovits~\cite{SI68} also considered this problem and proved that
if $t\ge 1$ and $\ell \ge 2$ are fixed integers, then (\ref{equ:Moon}) holds for all sufficiently large $n$.

It becomes much more complicated when extending Theorem~\ref{THM:Moon-disjoint-clique} to larger $t$.
Indeed, a full density version of the Corr\'{a}di--Hajnal Theorem was obtained only very recently by Allen, B\"{o}ttcher, Hladk\'{y}, and Piguet~\cite{ABHP15} for large $n$. 
Their results show that, interestingly, there are four different extremal constructions for four different regimes of $t$, and the construction $K_{t} \uproduct T(n-t, 2)$ is extremal only for $t \le \frac{2n-6}{9}$. For the other three extremal constructions, we refer the reader to their paper for details.
For larger complete graphs, it seems that there are even no conjectures for the extremal constructions in general (see remarks in the last section of~\cite{ABHP15}).  

The objective of this work is to provide a general approach to determine $\mathrm{ex}(n, (t+1)F)$ for nondegenerate hypergraphs (including graphs) $F$ when $n$ is sufficiently large and $t$ is within the range of $[0, c_F n]$, where $c_F>0$ is a small constant depending only on $F$. 
Our main results are stated in the next section after the introduction of some necessary definitions. 
We hope our results could shed some light on a full generalization of the density version of the  Corr\'{a}di--Hajnal Theorem. 

\subsection{Main results}\label{SUBSEC:main-result}
Given an $r$-graph $F$ and an integer $n \in \mathbb{N}$ define
\begin{align*}
    \delta(n, F) 
         := \mathrm{ex}(n, F) - \mathrm{ex}(n-1, F) \quad \text{and} \quad
    d(n, F) 
         :=  \frac{r\cdot \mathrm{ex}(n, F)}{n}. 
\end{align*}
Observe that $d(n,F)$ is the average degree of hypergraphs in $\mathrm{EX}(n,F)$, and $\delta(n,F)$ is a lower bound for the minimum degree of hypergraphs in $\mathrm{EX}(n,F)$ (see Fact~\ref{FACT:min-degree-extremal}). 

The following two definitions are crucial for our main results. 
The first definition concerns the maximum degree of a near-extremal $F$-free $r$-graph. 

\begin{definition}[\textbf{Boundedness}]\label{DEFINITION:Bad-Degree}
Let $f_1, f_2 \colon \mathbb{N} \to \mathbb{R}$ be two nonnegative functions.
An $r$-graph $F$ is $\left(f_1, f_2\right)$-bounded if 
every $F$-free $r$-graph $\mathcal{H}$ on $n$ vertices with 
average degree at least $d(n, F) - f_1(n)$ 
satisfies $\Delta(\mathcal{H}) \le d(n, F) + f_2(n)$, i.e. 
\begin{align*}
    d(\mathcal{H}) \ge d(n, F) - f_1(n) \quad\Rightarrow\quad \Delta(\mathcal{H}) \le d(n, F) + f_2(n). 
\end{align*}
\end{definition}

Later we will prove that families with certain stability properties also have good boundedness (see Theorem~\ref{THM:vertex-extend-bounded}). 

The next definition concerns the smoothness of the Tur\'{a}n function $\mathrm{ex}(n,F)$. 

%
\begin{definition}[\textbf{Smoothness}]\label{DEFINITION:smoothness}
Let $g \colon \mathbb{N} \to \mathbb{R}$ be a nonnegative function. 
The Tur\'{a}n function $\mathrm{ex}(n, F)$ of an $r$-graph $F$ is $g$-smooth if 
\begin{align*}
    \left|\delta(n, F) - d(n-1, F)\right| \le g(n)  
    \quad\text{holds for all } n \in \mathbb{N}. 
\end{align*}
\end{definition}

Assumptions on the smoothness of $\mathrm{ex}(n, F)$ were used by several researchers before. 
See e.g. \cite{AKSV14,JLM21}  for degenerate graphs 
and see e.g.~\cite[Theorem~1.4]{KLM14} for nondegenerate hypergraphs. 

Now we are ready to state our main result. 

\begin{theorem}\label{THM:Main-simple}
Fix integers $m \ge r \ge 2$ and a nondegenerate $r$-graph $F$ on $m$ vertices. 
Suppose that there exists a constant $c >0$ such that for all sufficiently large $n\in \mathbb{N} \colon$
\begin{enumerate}[label=(\alph*)]
\item
    $F$ is $\left(c\binom{n}{r-1}, \frac{1-\pi(F)}{4m}\binom{n}{r-1}\right)$-bounded, and 
\item
    $\mathrm{ex}(n, F)$ is $\frac{1-\pi(F)}{8m}\binom{n}{r-1}$-smooth. 
\end{enumerate}
Then there exists $N_0$ such that for all integers $n \ge N_0$ and $t \le \min\left\{\frac{c}{4erm}n, \frac{1-\pi(F)}{64rm^2}n\right\}$, 
we have 
\begin{align}\label{equ:turan-number-structure-main}
    \mathrm{EX}\left(n, (t+1)F\right)
        = K_{t}^r \uproduct \mathrm{EX}(n-t, F),
\end{align}
and, in particular, 
\begin{align}\label{equ:turan-number-main}
    \mathrm{ex}\left(n, (t+1)F\right) 
        = \binom{n}{r} - \binom{n-t}{r} + \mathrm{ex}(n-t, F).
\end{align}
\end{theorem}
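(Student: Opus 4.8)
The plan is to prove the two equations simultaneously by induction on $t$, using the standard "stability + degree absorption" strategy that underlies this type of result. The base case $t=0$ is trivial since it reduces to the definition of $\mathrm{ex}(n,F)$ and $\mathrm{EX}(n,F)$. For the inductive step, fix $t \ge 1$ and let $\mathcal{H}$ be an $n$-vertex $r$-graph with $\nu(F,\mathcal{H}) \le t$ and $|\mathcal{H}| = \mathrm{ex}(n,(t+1)F)$. First I would establish the lower bound in \eqref{equ:turan-number-main}: the construction $K_{t}^{r} \uproduct G$ with $G \in \mathrm{EX}(n-t,F)$ has no $t+1$ vertex-disjoint copies of $F$ (any copy of $F$ must either lie inside $V(G)$, of which there are at most $t$ disjoint ones impossible since $G$ is $F$-free — wait, actually any copy of $F$ must use at least one of the $t$ apex vertices, except those entirely inside $G$, and $G$ is $F$-free, so at most $t$ disjoint copies), giving the bound $\binom{n}{r}-\binom{n-t}{r}+\mathrm{ex}(n-t,F)$.

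For the upper bound, the key is to find a vertex $v \in V(\mathcal{H})$ of very high degree, delete it, and apply induction to $\mathcal{H}-v$ on $n-1$ vertices with parameter $t-1$ (since $\nu(F,\mathcal{H}-v) \le t-1$ would follow if $v$ lies in every maximum $F$-matching — but more carefully, one argues $\nu(F,\mathcal{H}-v)\le t$ always, and uses a counting/extremal-number comparison to force the structure). The high-degree vertex is produced by combining hypotheses (a) and (b): since $|\mathcal{H}|$ is close to $\mathrm{ex}(n,(t+1)F)$, which by the lower bound and an iterated application of smoothness is close to $\mathrm{ex}(n,F) + \sum$ (degree-type terms), the average degree of $\mathcal{H}$ — or rather of a large $F$-free subgraph obtained by deleting a bounded $F$-matching — is at least $d(n',F) - f_1(n')$ for the appropriate $n'$. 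Then the boundedness hypothesis (a) gives a vertex of degree close to $d(n,F)$, and in fact one wants degree at least $\binom{n-1}{r-1}$-ish; this is where the precise constants $\frac{1-\pi(F)}{4m}$ and $\frac{1-\pi(F)}{8m}$ and the bound on $t$ enter, to ensure the accumulated error terms over $t$ steps stay below the gap $1-\pi(F)$.

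More concretely, the sequence of steps I would carry out: (1) prove the lower bounds for both \eqref{equ:turan-number-structure-main} and \eqref{equ:turan-number-main}; (2) use smoothness (b) to show $\mathrm{ex}(n,F) - \mathrm{ex}(n-t,F)$ is within $t \cdot g(n)$ of $\sum_{i=0}^{t-1} d(n-i,F)$, hence $\mathrm{ex}(n,(t+1)F) \ge \binom{n}{r}-\binom{n-t}{r}+\mathrm{ex}(n-t,F)$ is at least roughly $\mathrm{ex}(n,F) + (\text{something}) - t\cdot g(n)$; (3) take an extremal $\mathcal{H}$, greedily remove vertices of small degree (or remove a maximum $F$-matching's worth of vertices) to expose an $F$-free subgraph $\mathcal{H}'$ on $n' \ge n - tm$ vertices whose average degree meets the boundedness threshold $d(n',F) - c\binom{n'}{r-1}$; (4) apply (a) to get $\Delta(\mathcal{H}') \le d(n',F) + \frac{1-\pi(F)}{4m}\binom{n'}{r-1}$, and more importantly use the density surplus of $\mathcal{H}$ over $\mathcal{H}'$ to locate a vertex $v$ of degree at least $\binom{n-1}{r-1} - o(\binom{n-1}{r-1})$ and in fact (via a cleaning argument) a vertex $v$ whose link is so dense it must lie in $K_t^r$-like position; (5) verify $\nu(F, \mathcal{H}-v) \le t-1$ — if not, a copy of $F$ avoiding $v$ plus $t$ disjoint copies in $\mathcal{H}-v$... one shows that the high degree of $v$ forces $v$ into a copy of $F$ extending any $(t-1)$-matching, contradicting $\nu(F,\mathcal{H})\le t$ unless $\nu(F,\mathcal{H}-v)\le t-1$; (6) apply the induction hypothesis to $\mathcal{H}-v$ to get $|\mathcal{H}-v| \le \binom{n-1}{r}-\binom{n-1-t}{r}+\mathrm{ex}(n-t,F)$ with equality structure $K_{t-1}^r \uproduct \mathrm{EX}(n-t,F)$, and since $d_{\mathcal{H}}(v) \le \binom{n-1}{r-1}$ add back to get the desired upper bound; (7) trace the equality case to conclude $v$ is joined to everything and $\mathcal{H} = \{v\} \uproduct (\mathcal{H}-v) = K_t^r \uproduct G$ with $G \in \mathrm{EX}(n-t,F)$.

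The main obstacle, I expect, is step (5) together with the quantitative bookkeeping across steps (3)–(4): showing that a near-extremal $\mathcal{H}$ actually contains a single vertex of degree essentially $\binom{n-1}{r-1}$ (rather than merely that the average degree is large), and that this vertex can be removed while dropping the matching number by exactly one. This requires a stability-type argument — one must argue that if no such dominant vertex exists, the edge count falls strictly below $\mathrm{ex}(n,(t+1)F)$, which in turn relies on the boundedness hypothesis being strong enough to rule out "spread-out" near-extremal configurations. Controlling the error terms so that $t$ iterations of the induction do not accumulate past the $(1-\pi(F))$-sized gap is exactly what the hypotheses on $c$, on the smoothness rate, and the upper bound $t \le \min\{\frac{c}{4erm}n, \frac{1-\pi(F)}{64rm^2}n\}$ are calibrated for, so the proof amounts to checking these constants propagate correctly.
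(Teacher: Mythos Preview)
Your inductive strategy is different from the paper's direct argument, and while the core ingredients overlap, there is a genuine muddle in how you deploy hypothesis~(a). The paper does \emph{not} induct on $t$. Instead (see Theorem~\ref{THEOREM:main-techincal} and its proof), it defines in one shot the set $L$ of \emph{all} vertices whose degree exceeds the threshold $d(n-t,F)+\frac{1-\pi(F)}{2m}\binom{n-t}{r-1}$, proves $|L|\le t$ via a greedy argument (Claim~\ref{CLAIM:ell<t+1}), and then splits into two cases: if $|L|=t$ the structure follows immediately, while if $|L|<t$ a direct counting lemma (Lemma~\ref{LEMMA:Case1-small-maxdeg}) on the bounded-degree remainder $\mathcal{H}[V\setminus L]$ yields a contradiction. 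Your induction, by contrast, peels off one high-degree vertex at a time; this can be made to work, but it requires tracking the constants through $t$ iterations, which the paper avoids entirely.

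The real gap in your sketch is step~(4). You write ``apply (a) to get $\Delta(\mathcal{H}')\le\ldots$, and more importantly use the density surplus\ldots to locate a vertex $v$ of degree at least $\binom{n-1}{r-1}-o(\cdot)$''. But boundedness is an \emph{upper} bound on the maximum degree of an $F$-free graph with high average degree; it does not by itself produce a high-degree vertex in $\mathcal{H}$. The correct dichotomy (which the paper makes explicit) is: either \emph{no} vertex exceeds the threshold, in which case the counting argument of Lemma~\ref{LEMMA:Case1-small-maxdeg} bounds $|\mathcal{H}|$ strictly below the target; or some vertex $v$ does exceed it, and then boundedness is used \emph{contrapositively} (as in Claim~\ref{CLAIM:F-avoid-Bi}) to show that after deleting any $\le 2mt$ vertices the induced subgraph around $v$ still has average degree above $d(n',F)-f(n')$ and maximum degree (at $v$) above $d(n',F)+\frac{1-\pi(F)}{4m}\binom{n'}{r-1}$, hence contains a copy of $F$. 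This is what makes your step~(5) go through, and it is the only place hypothesis~(a) enters. Your proposal conflates these two roles and never states the dichotomy cleanly, so as written the argument does not close.
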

\textbf{Remark.}
Note that one cannot hope that (\ref{equ:turan-number-main}) holds for all nondegenerate $r$-graphs. 
Indeed, if we let $F = 2K_3$ and let $t \ge 2$, then
\begin{align*}
    \mathrm{ex}(n,(t+1)F) = \mathrm{ex}(n,(2t+2)K_3)
    & \ge \binom{n}{2}-\binom{n-2t-1}{2}+ \left\lfloor \frac{(n-2t-1)^2}{4} \right\rfloor \\
    & > \binom{n}{2}-\binom{n-t}{2} + \left\lfloor \frac{(n-1)^2}{4} \right\rfloor + n-1 \\
    & = \binom{n}{2}-\binom{n-t}{2} + \mathrm{ex}(n-t, F).   
\end{align*}

Fix an $r$-graph $F$ on $m$ vertices. 
We say a collection $\left\{\mathcal{H}_1, \ldots, \mathcal{H}_{t+1}\right\}$ of $r$-graphs on the same vertex set $V$ has a \textbf{rainbow $F$-matching} 
if there exists a collection $\left\{S_i \colon i \in [t+1]\right\}$ of pairwise disjoint $m$-subsets of $V$ such that $F \subset \mathcal{H}_{i}[S_i]$ for all $i\in [t+1]$.

Recently, there has been considerable interest in extending some classical results to a rainbow version. See e.g. \cite{AH,GLMY22,HLS12,KK21,LWY22,LWY23} for some recent progress on the rainbow version of the Erd\H{o}s Matching Conjecture. 
Here we include the following rainbow version of Theorem~\ref{THM:Main-simple}. 

\begin{theorem}\label{THM:main-rainbow}
The following holds under the assumption of Theorem~\ref{THM:Main-simple}. 
If a collection $\left\{\mathcal{H}_1, \ldots, \mathcal{H}_{t+1}\right\}$ of $n$-vertex $r$-graphs on the same vertex set satisfies
    \begin{align*}
        |\mathcal{H}_i| > \binom{n}{r}-\binom{n-t}{r}+\mathrm{ex}(n-t,F) 
        \quad\text{for all } i\in [t+1], 
    \end{align*}
then $\left\{\mathcal{H}_1, \ldots, \mathcal{H}_{t+1}\right\}$ contains a rainbow $F$-matching. 
\end{theorem}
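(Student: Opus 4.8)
The plan is to run the proof of Theorem~\ref{THM:Main-simple} with colours attached: reformulate the question via a maximum rainbow $F$-matching, reduce by the same counting to the ``stability'' regime, and there reuse the structural heart of that proof. Throughout we use the conclusions of Theorem~\ref{THM:Main-simple}, namely $\mathrm{ex}(n,(t+1)F)=\binom{n}{r}-\binom{n-t}{r}+\mathrm{ex}(n-t,F)$ and that any $n$-vertex $r$-graph with more than $\mathrm{ex}(n,(t+1)F)$ edges has $F$-matching number at least $t+1$; in particular $\nu(F,\mathcal{H}_i)\ge t+1$ for every $i$. We also note that hypothesis~(b), the monotonicity of $\mathrm{ex}(k,F)/\binom{k}{r}$ (Proposition~\ref{PROP:KNS}) and $\pi(F)<1$ together give $\delta(k,F)\le d(k-1,F)+\tfrac{1-\pi(F)}{8m}\binom{k}{r-1}<\binom{k-1}{r-1}$ for all large $k$, so that $\psi(k):=\binom{k}{r}-\mathrm{ex}(k,F)$ is strictly increasing on the range of interest.

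First I would assume, for contradiction, that the family has no rainbow $F$-matching, fix a maximum rainbow $F$-matching $\mathcal{M}=\{(S_1,c_1),\dots,(S_s,c_s)\}$ (distinct colours $c_i$, pairwise disjoint $m$-sets $S_i$), and put $W=S_1\cup\cdots\cup S_s$, so $|W|=sm$ and $s\le t$. The case $s=0$ is impossible because $|\mathcal{H}_1|>\mathrm{ex}(n,(t+1)F)\ge\mathrm{ex}(n,F)$. Choose a colour $j\notin\{c_1,\dots,c_s\}$; by maximality of $\mathcal{M}$ the $r$-graph $\mathcal{H}_j[V\setminus W]$ is $F$-free, so $|\mathcal{H}_j|\le\binom{n}{r}-\binom{n-sm}{r}+\mathrm{ex}(n-sm,F)$. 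Comparing this with $|\mathcal{H}_j|>\binom{n}{r}-\binom{n-t}{r}+\mathrm{ex}(n-t,F)$ yields $\psi(n-sm)<\psi(n-t)$, and since $\psi$ is strictly increasing this forces $sm>t$. Hence from now on we may assume $t/m<s\le t$.

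In this regime the crude count above is no longer contradictory, and we must use structure, exactly as in the corresponding part of Theorem~\ref{THM:Main-simple}. Because $t$ lies in the stated range, a routine estimate shows that the $F$-free $r$-graph $\mathcal{H}_j[V\setminus W]$ has average degree at least $d(n-sm,F)-c\binom{n-sm}{r-1}$, so hypothesis~(a) bounds its maximum degree by $d(n-sm,F)+\tfrac{1-\pi(F)}{4m}\binom{n-sm}{r-1}$. Since $\nu(F,\mathcal{H}_j)\ge t+1>s$, every copy of $F$ in $\mathcal{H}_j$ meets $W$; pick such a copy and a vertex $w$ of it lying in some $S_i$. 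Using the degree bound just obtained together with the vertex-extendability of $F$ (the robustness of near-extremal $F$-free configurations: each near-extremal $F$-free $r$-graph contains a copy of $F-v$ avoiding any prescribed $o(n)$ vertices), one can replace $(S_i,c_i)$ in $\mathcal{M}$ by a copy of $F$ in $\mathcal{H}_j$ through $w$ that misses $W\setminus S_i$; this is again a maximum rainbow $F$-matching, but now colour $c_i$ is free and the $m-1$ vertices $S_i\setminus\{w\}$ are uncommitted. Iterating this exchange produces an alternating chain which, by the same degree/robustness control, must terminate either by enlarging $\mathcal{M}$ or by exposing in some $\mathcal{H}_{c}$, for a currently free colour $c$, a copy of $F$ disjoint from the current matching; either outcome contradicts maximality. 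Keeping all error terms below the margins afforded by (a) and (b) is where the bound $t\le\min\{\tfrac{c}{4erm}n,\tfrac{1-\pi(F)}{64rm^2}n\}$ comes from, just as in Theorem~\ref{THM:Main-simple}.

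The main obstacle is this alternating-chain step: converting a mere degree bound on $\mathcal{H}_j[V\setminus W]$, rather than an exact structural description, into a genuine enlargement of $\mathcal{M}$ or a disjoint free-colour copy. The delicate points are that vertex-extendability must be invoked repeatedly to re-route the $O(tm)$ already-committed vertices without collisions; that each re-colouring may use only the single colour freed at that step; and that the chain must close up before the committed set grows past the point where the slack in hypothesis~(a) is exhausted. This is precisely the coloured analogue of the core argument in the proof of Theorem~\ref{THM:Main-simple}, and I would carry it out in the same way.
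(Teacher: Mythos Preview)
There is a genuine gap in the alternating-chain step, and it stems from invoking a hypothesis you do not have. Theorem~\ref{THM:main-rainbow} assumes only boundedness and smoothness of $F$; vertex-extendability is \emph{not} among the hypotheses (it appears in the paper only as one route, via Theorem~\ref{THM:vertex-extend-bounded}, to establish boundedness). Moreover, what you describe parenthetically as vertex-extendability --- ``each near-extremal $F$-free $r$-graph contains a copy of $F-v$ avoiding any prescribed $o(n)$ vertices'' --- is not what that notion says, and is not a consequence of the stated hypotheses. Without it, your exchange step has no engine: you need, for a designated vertex $w\in W$, a copy of $F$ in $\mathcal{H}_j$ through $w$ that avoids $W\setminus S_i$, and nothing in (a) or (b) furnishes this. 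The chain argument is also underspecified: after one swap you still have a rainbow matching of size $s$ with one colour freed, and you give no reason why iteration terminates in an augmentation rather than cycling. A secondary issue is logical order: in the paper Theorem~\ref{THM:main-rainbow} is proved \emph{first} and the Tur\'an-number conclusion of Theorem~\ref{THM:Main-simple} is derived from it, so using $\mathrm{ex}(n,(t+1)F)=\binom{n}{r}-\binom{n-t}{r}+\mathrm{ex}(n-t,F)$ as an input is circular.

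The paper's argument avoids exchanges entirely. It first isolates a maximal \emph{heavy-rainbow} set $L=\{v_1,\dots,v_k\}$, one vertex per colour, with $d_{\mathcal{H}_i}(v_i)\ge d(n-t,F)+\tfrac{1-\pi(F)}{2m}\binom{n-t}{r-1}$. By maximality, each remaining colour $j>k$ has $\Delta(\mathcal{H}_j)$ globally bounded, and a direct counting lemma (Lemma~\ref{LEMMA:Case1-small-maxdeg-rainbow}) --- your crude estimate with the trivial bound $\binom{n}{r}-\binom{n-|W|}{r}$ on crossing edges replaced by $|W|\cdot\Delta(\mathcal{H}_j)$ --- produces a rainbow $F$-matching $\{S_{k+1},\dots,S_{t+1}\}$ for those colours inside $V\setminus L$. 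For each heavy colour $i\le k$ one uses boundedness \emph{in contrapositive}: after deleting any set $B_i$ of size $\le 2mt$, the graph $\mathcal{H}_i[V\setminus B_i]$ still has average degree above the boundedness threshold while $v_i$ retains degree above $d(n-2mt,F)+\tfrac{1-\pi(F)}{4m}\binom{n-2mt}{r-1}$; were it $F$-free, hypothesis~(a) would cap its maximum degree, a contradiction. This yields copies of $F$ in the heavy colours avoiding any small prescribed set, and a greedy pass completes the rainbow matching. The key idea you are missing is precisely this contrapositive use of boundedness to force a copy of $F$ through a high-degree vertex.
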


Observe that (\ref{equ:turan-number-main}) follows immediately
by letting $\mathcal{H}_1 = \cdots = \mathcal{H}_{t+1}$ in Theorem~\ref{THM:main-rainbow}. 
In fact, we will prove Theorem~\ref{THM:main-rainbow} first (which yields (\ref{equ:turan-number-main})), and then we prove (\ref{equ:turan-number-structure-main}) by adding some further argument. 

\subsection{Boundedness and smoothness}\label{SUBSEC:Bound-Smooth}
In this subsection, we present some simple sufficient conditions for an $r$-graph to have good boundedness and smoothness. Before stating our results, let us introduce some necessary definitions. 

For many nondegenerate Tur\'{a}n problems the extremal constructions usually have simple structures. We use the following notions to encode the structural information of a hypergraph. 

Let an \textbf{$r$-multiset} mean an unordered collection of $r$ elements with repetitions allowed. Let $E$ be a collection of $r$-multisets on $[k]$.  
Let $V_1,\dots,V_k$ be disjoint sets and let $V:=V_1\cup\dots\cup V_k$. 
The \textbf{profile} of an $r$-set $X\subseteq V$ (with respect to $V_1,\dots,V_k$) is
the $r$-multiset on $[k]$ that contains $i\in [k]$ with multiplicity $|X\cap V_i|$. 
For an $r$-multiset $Y\subseteq [k]$, 
let $\blow{Y}{V_1,\dots,V_k}$ consist of all $r$-subsets of $V$ whose profile is $Y$. 
The $r$-graph $\blow{Y}{V_1,\dots,V_k}$ is called the \textbf{blowup} of $Y$ (with respect to $V_1,\dots,V_k$) and
the $r$-graph
\begin{align*}
    \blow{E}{V_1,\dots,V_k}:=\bigcup_{Y\in E} \blow{Y}{V_1,\dots,V_k}    
\end{align*}
is called the \textbf{blowup} of $E$ (with respect to $V_1,\dots,V_k$).

An \textbf{($r$-uniform) pattern} is a pair $P=(k,E)$ where $k$ is a positive integer and 
$E$ is a collection of $r$-multisets on $[k]$. 
It is clear that pattern is a generalization of $r$-graphs, 
since an $r$-graph is a pattern in which $E$ consists of only simple $r$-sets. 
If it is clear from the context, we will use $E$ to represent the pattern $P$ for simplicity (like what we did for hypergraphs). 
Moreover, if $E$ consists of a single element, we will use this element to represent $E$. 

We say an $r$-graph $\mathcal{G}$ is a \textbf{$P$-construction} on a set $V$ 
if there exists a partition $V = V_1 \cup \cdots \cup V_{k}$ such that $\mathcal{G} = \blow{E}{V_1,\dots,V_k}$. 
An $r$-graph $\mathcal{H}$ is a \textbf{$P$-subconstruction} if it is a subgraph of some $P$-construction. 
For example, the Tur\'{a}n graph $T(n,\ell)$ is a $K_{\ell}$-constrction on $[n]$, and an $\ell$-partite graph is a $K_{\ell}$-subconstrction. 

Let $\Lambda(P, n)$ denote the maximum number of edges in a $P$-construction with $n$ vertices and define the \textbf{Lagrangian} of $P$ as the limit
\begin{align*}
    \lambda(P) := \lim_{n\to \infty} \frac{\Lambda(P, n)}{\binom{n}{r}}. 
\end{align*}
Using a simple averaging argument, one can show that ${\Lambda(P, n)}/{\binom{n}{r}}$ is nonincreasing, and hence, the limit exists. 
We say a pattern $P = (k,E)$ is \textbf{minimum} if $\lambda(P-i) < \lambda(P)$ for all $i\in [k]$, 
where $P-i$ denotes the new pattern obtained from $P$ by removing $i$ from $[k]$ and removing all $r$-multisets containing $i$ from $E$.
Note that the Lagrangian of a pattern is a generalization
of the well-known \textbf{hypergraph Lagrangian} (see e.g.~\cite{BT11,FR84}) that
has been successfully applied to Tur\'an-type problems, with
the basic idea going back to Motzkin and Straus~\cite{MS65}.

\textbf{Remark.}
The notion of pattern was introduced by Pikhurko in~\cite{PI14} to study the general properties of nondegenerate hypergraph Tur\'{a}n problems, and it was also used very recently in~\cite{LP22,LP22a}. 
Note that the definition of pattern in~\cite{PI14} is more general by allowing recursive parts. 
Our results about patterns in this work can be easily extended to this more general setting.

Let $F$ be an $r$-graph and $P$ be a pattern. 
We say $(F,P)$ is a \textbf{Tur\'{a}n pair} if every $P$-construction is $F$-free 
and every maximum $F$-free construction is a $P$-construction. 
For example, it follows from the Tur\'{a}n Theorem that $\left(K_{\ell+1}, K_{\ell}\right)$ is a Tur\'{a}n pair for all $\ell \ge 2$.
It is easy to observe that for a Tur\'{a}n pair $(F,P)$, we have
\begin{align}\label{equ:Turan-density-Lagrangian}
    \pi(F) = \lambda(P). 
\end{align}

For hypergraphs in Tur\'{a}n pairs, we have the following result concerning the smoothness of their Tur\'{a}n functions. 

\begin{theorem}\label{THM:Turan-pair-smooth}
    Suppose that $F$ is an $r$-graph and $P$ is a minimal pattern  such that $(F,P)$ is a Tur\'{a}n pair. 
    Then $\mathrm{ex}(n, F)$ is $4\binom{n-1}{r-2}$-smooth. 
\end{theorem}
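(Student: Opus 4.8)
The plan is to bound $|\delta(n,F) - d(n-1,F)|$ by controlling both quantities through the extremal structure of the Tur\'an pair $(F,P)$. Recall $\delta(n,F) = \mathrm{ex}(n,F) - \mathrm{ex}(n-1,F)$ and $d(n-1,F) = \frac{r\cdot\mathrm{ex}(n-1,F)}{n-1}$. Since $(F,P)$ is a Tur\'an pair, $\mathrm{ex}(n,F) = \Lambda(P,n)$ for all $n$, so it suffices to understand how $\Lambda(P,n)$ grows when we pass from $n-1$ to $n$. The key is that an optimal $P$-construction on $n$ vertices is a blowup $\blow{E}{V_1,\dots,V_k}$ with part sizes $n_1,\dots,n_k$ chosen to maximize the edge count; because $P$ is minimal, all parts are nonempty, and moreover the optimal part sizes vary by $O(1)$ as $n$ increases by $1$ (the optimization is a discrete concave-type problem whose optimum shifts continuously). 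So I would first fix near-optimal part sizes for $n$ and, by deleting one vertex from an appropriate part, produce a $P$-construction on $n-1$ vertices; conversely, adding one vertex to a near-optimal construction on $n-1$ vertices gives one on $n$ vertices.

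The heart of the argument is a clean formula for the number of edges gained by adding a single vertex $v$ to part $V_j$ in a blowup $\blow{E}{V_1,\dots,V_k}$. Each new edge through $v$ uses $v$ together with $r-1$ other vertices whose multiset of parts, augmented by $j$, lies in $E$; the count of such $(r-1)$-sets is a sum, over multisets $Y\in E$ containing $j$, of products of the $n_i$'s (with $j$'s multiplicity reduced by one), which is a polynomial in the $n_i$'s of degree $r-1$ with leading term of order $\binom{n}{r-1}$-ish, but more precisely the degree of each monomial is exactly $r-1$. The point is that this "added degree" equals $d(n-1,F)$ up to an error term: summing the added degree over all $n$ ways to build the $n$-vertex construction one vertex at a time and dividing by $n$ recovers $\frac{r\,\mathrm{ex}(n,F)}{n} = d(n,F)$, and $d(n,F)$ and $d(n-1,F)$ themselves differ by only $O(\binom{n-1}{r-2})$ because $\mathrm{ex}(n,F) - \mathrm{ex}(n-1,F) \le \binom{n-1}{r-1}$ crudely and the averaging smooths this out. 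So I would show $\delta(n,F)$ — which is exactly the maximum added degree over choices of part to extend — lies within $O(\binom{n-1}{r-2})$ of the average added degree, which in turn is within $O(\binom{n-1}{r-2})$ of $d(n-1,F)$; chaining these gives the constant $4\binom{n-1}{r-2}$ after bookkeeping.

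In more detail, the steps I would carry out in order are: (1) record that $\mathrm{ex}(m,F) = \Lambda(P,m)$ for all $m$ and that optimal part sizes are balanced up to an additive constant depending only on $P$; (2) write the explicit polynomial $D_j(n_1,\dots,n_k)$ for the added degree when extending part $j$, and observe each of its monomials has degree $r-1$, so replacing one $n_i$ by $n_i \pm O(1)$ perturbs $D_j$ by $O(\binom{n}{r-2})$; (3) show $\delta(n,F) = \max_j D_j(\mathbf{n})$ where $\mathbf{n}$ is optimal for $n-1$ (deleting the extended vertex must return an optimal $(n-1)$-construction, using minimality of $P$ and optimality), and that this max differs from the $n_j$-weighted average $\frac{1}{n}\sum_j n_j D_j(\mathbf{n})$ by $O(\binom{n}{r-2})$ since all $n_j$ are within $O(1)$ of $n/k$; (4) identify that weighted average as $\frac{r\,\Lambda(P,n)}{n} = d(n,F)$ by double counting incidences; (5) bound $|d(n,F)-d(n-1,F)|$ by $O(\binom{n-1}{r-2})$ using $|\Lambda(P,n)-\Lambda(P,n-1)| \le \binom{n-1}{r-1}$ together with $\Lambda(P,n) = \Theta(n^r)$; (6) combine. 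I expect the main obstacle to be step (3): arguing carefully that the maximum-edge $P$-construction on $n$ vertices is obtained from an optimal one on $n-1$ vertices by a single vertex addition (not some global reshuffling of part sizes), which needs the quasi-concavity of $m \mapsto \Lambda(P,m)$ and a short exchange argument showing two optimal part-size vectors for consecutive $m$ can be taken to differ in a single coordinate by one — this is where the "minimal pattern" hypothesis does real work, and where I would be most careful to keep all error terms genuinely $O(\binom{n-1}{r-2})$ rather than $O(\binom{n}{r-1})$.
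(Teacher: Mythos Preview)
Your proposal heads in a workable direction but is vastly more complicated than necessary, and contains a concrete error. The paper's proof is three lines; none of your structural analysis of $P$-constructions is required.

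Here is the paper's argument. First, for \emph{any} $r$-graph $F$ one has $\delta(n,F) \le d(n,F)$: take a maximum $n$-vertex $F$-free $r$-graph $\mathcal{H}$ and delete a vertex of minimum degree; the result is $F$-free on $n-1$ vertices, so $\mathrm{ex}(n-1,F) \ge |\mathcal{H}| - \delta(\mathcal{H})$, whence $\delta(n,F) \le \delta(\mathcal{H}) \le d(\mathcal{H}) = d(n,F)$ (this is Fact~\ref{FACT:min-degree-extremal}). Second, since $(F,P)$ is a Tur\'an pair, $\delta(n,F) \ge d(n-1,F)$: take a maximum $(n-1)$-vertex $F$-free $r$-graph $\mathcal{H}$, which is a $P$-construction, and \emph{duplicate} a vertex of maximum degree; the result is still a $P$-construction, hence $F$-free, with $|\mathcal{H}| + \Delta(\mathcal{H})$ edges, so $\delta(n,F) \ge \Delta(\mathcal{H}) \ge d(\mathcal{H}) = d(n-1,F)$ (this is Fact~\ref{FACT:(F,P)-Turan-Pair}). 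These two inequalities sandwich $\delta(n,F)$ between $d(n-1,F)$ and $d(n,F)$, so
\[
|\delta(n,F) - d(n-1,F)| = \delta(n,F) - d(n-1,F) \le d(n,F) - d(n-1,F) \le 4\binom{n-1}{r-2}
\]
by Lemma~\ref{LEMMA:smooth-degree}, which holds for arbitrary $F$ via the Katona--Nemetz--Simonovits averaging. Observe that minimality of $P$ is never invoked.

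As for your plan: your claim in steps~(1) and~(3) that optimal part sizes are within $O(1)$ of $n/k$ is false --- e.g.\ for the semibipartite pattern $S_3 = (2,\{1,2,2\})$ the optimal sizes are roughly $n/3$ and $2n/3$. More importantly, the obstacle you flag in step~(3) --- that an optimal $n$-vertex $P$-construction arises from an optimal $(n-1)$-vertex one by a single vertex addition --- is exactly what the delete/duplicate trick sidesteps. You never need the two optimal constructions to be nested; you only need the two one-sided inequalities obtained by comparing an optimal construction on one side to a \emph{suboptimal} (but still $F$-free) construction on the other. Once you see this, your steps~(2)--(4) collapse, and your step~(5) is just Lemma~\ref{LEMMA:smooth-degree}.
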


The boundedness of $F$ is closely related to the stability of $F$. 
So we introduce some definitions related to stability. 
Suppose that $(F,P)$ is a Tur\'{a}n pair. 
\begin{itemize}
    \item We say $F$ is \textbf{edge-stable} with respect to $P$ if for every $\delta>0$ there exist constants $N_0$ and $\zeta >0$ such that for every $F$-free $r$-graph $\mathcal{H}$ on $n \ge N_0$ vertices with at least $\left(\pi(F)-\zeta\right)\binom{n}{r}$ edges, there exists a subgraph $\mathcal{H}'\subset \mathcal{H}$ with at least $\left(\pi(F)-\delta\right)\binom{n}{r}$ edges such that $\mathcal{H}'$ is a $P$-subconstruction. 
    \item We say $F$ is \textbf{vertex-extendable} with respect to $P$ if there exist constants $N_0$ and $\zeta >0$ such that for every $F$-free $r$-graph $\mathcal{H}$ on $n \ge N_0$ vertices satisfing $\delta(\mathcal{H}) \ge \left(\pi(F)-\zeta\right)\binom{n-1}{r-1}$ the following holds: 
    if $\mathcal{H}-v$ is a $P$-subconstruction for some vertex $v\in V(\mathcal{H})$, then $\mathcal{H}$ is also a $P$-subconstruction.
    \item We say $F$ is \textbf{weakly vertex-extendable} with respect to $P$ if for every $\delta>0$ there exist constants $N_0$ and $\zeta >0$ such that for every $F$-free $r$-graph $\mathcal{H}$ on $n \ge N_0$ vertices satisfying $\delta(\mathcal{H}) \ge \left(\pi(F)-\zeta\right)\binom{n-1}{r-1}$ the following holds: 
    if $\mathcal{H}-v$ is a $P$-subconstruction for some vertex $v\in V(\mathcal{H})$, then $d_{\mathcal{H}}(v) \le \left(\pi(F)+\delta\right)\binom{n-1}{r-1}$. 
\end{itemize}
For simplicity, 
if $P$ is clear from the context, we will simply say that $F$ is edge-stable, vertex-extendable, and weakly vertex-extendable, respectively.

The first stability theorem which states that $K_{\ell+1}$ is edge-stable with respect to $K_{\ell}$ was proved independently by Erd\H{o}s and Simonovits~\cite{SI68}, and it was used first by Simonovits~\cite{SI68} to determine the exact Tur\'{a}n number $\mathrm{ex}(n,F)$ of an edge-critical graph $F$ for large $n$. 
Later, Simonovits' method (also known as the Stability Method) was used by many researchers to determine the Tur\'{a}n numbers of a large collection of hypergraphs (see Section~\ref{SEC:Applications} for more details).

The definition of vertex-extendability was introduced by Mubayi, Reiher, and the third author in~\cite{LMR23unif} for a unified framework for proving the stability of a large class of hypergraphs. 

The definition of weak vertex-extendability seems to be new, and it is clear from (\ref{equ:Turan-density-Lagrangian}) and the following lemma that for a Tur\'{a}n pair $(F,P)$ the vertex-extendability implies the weak vertex-extendability. 
There are several examples showing that the inverse is not true in general (see e.g Section~\ref{SUBSEC:expanded-triangle}). 
It seems interesting to explore the relations between the weak vertex-extendability and other types of stability (see~\cite{LMR23unif} for more details). 

\begin{lemma}[{\cite[Lemma~21]{LP22}}]\label{LEMMA:LP-pattern-min-max}
    Suppose that $P$ is a minimal pattern. 
    Then for every $\delta>0$ there exist $N_0$ and $\varepsilon>0$ such that every $P$-subconstruction $\mathcal{H}$ on $n \ge N_0$ vertices with $\delta(\mathcal{H}) \ge \left(\lambda(P)-\varepsilon\right)\binom{n-1}{r-1}$ satisfies $\Delta(\mathcal{H}) \le \left(\lambda(P)+ \delta\right)\binom{n-1}{r-1}$. 
\end{lemma}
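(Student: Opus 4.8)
The plan is to prove Lemma~\ref{LEMMA:LP-pattern-min-max} by extracting, from the minimality of $P$, a quantitative "deficiency" bound: if some part $V_i$ of a blowup is too large, then the number of edges drops by a fixed positive fraction of $\binom{n}{r}$, which forces some vertex to have small degree, contradicting the minimum-degree hypothesis. First I would fix a minimal pattern $P=(k,E)$ and set $\lambda := \lambda(P)$ and $\lambda_i := \lambda(P-i) < \lambda$ for each $i \in [k]$; let $\eta := \lambda - \max_i \lambda_i > 0$. Given $\delta > 0$, I want to choose $\varepsilon > 0$ (and $N_0$) small enough that the conclusion $\Delta(\mathcal{H}) \le (\lambda+\delta)\binom{n-1}{r-1}$ holds for every $P$-subconstruction $\mathcal{H}$ on $n \ge N_0$ vertices with $\delta(\mathcal{H}) \ge (\lambda - \varepsilon)\binom{n-1}{r-1}$.

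The key steps, in order. (1) Write $\mathcal{H} \subseteq \blow{E}{V_1,\dots,V_k}$ for some partition $V(\mathcal{H}) = V_1 \cup \cdots \cup V_k$ with $|V_i| = \alpha_i n$, $\sum \alpha_i = 1$. (2) Show that if the partition is badly unbalanced — precisely, if some $\alpha_i$ is smaller than a threshold $\beta = \beta(\varepsilon) > 0$ to be chosen — then $\mathcal{H}$, and even the full construction $\blow{E}{V_1,\dots,V_k}$, already has a vertex of degree well below $(\lambda - \varepsilon)\binom{n-1}{r-1}$: indeed, collapsing $V_i$ into the other parts shows $|\blow{E}{V_1,\dots,V_k}| \le \Lambda(P-i, n) + (\text{edges meeting } V_i) \le (\lambda_i + o(1))\binom{n}{r} + |V_i|\binom{n-1}{r-1}$, and by averaging some vertex has degree at most roughly $r\lambda_i \binom{n-1}{r-1}/\!\big(\text{something}\big) + \ldots$; more cleanly, a vertex of $V_i$ has degree at most $\binom{n-1}{r-1} - \binom{n - |V_i|}{r-1} \le (1 - (1-\beta)^{r-1} + o(1))\binom{n-1}{r-1}$ only if $|V_i|$ is small, but that is an upper bound of the wrong type — so instead I count that the average degree over $V(\mathcal{H})$ is at most $(\lambda_i + r\beta + o(1))\binom{n-1}{r-1}$, hence for $\beta$ small (depending on $\eta$) this is below $(\lambda - \varepsilon)\binom{n-1}{r-1}$, contradicting $\delta(\mathcal{H}) \ge (\lambda-\varepsilon)\binom{n-1}{r-1}$. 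So every $\alpha_i \ge \beta$. (3) With all parts of size $\ge \beta n$, bound the degree of an arbitrary vertex $v \in V_i$: $d_{\mathcal{H}}(v) \le d_{\blow{E}{V_1,\dots,V_k}}(v)$, and the latter is a polynomial in $\alpha_1,\dots,\alpha_k$ times $\binom{n-1}{r-1}(1+o(1))$ whose value is at most $\max$ over the simplex, which is controlled by $\lambda(P)$ up to $O(1/n)$ — more precisely, the degree of a vertex in a $P$-construction is at most $r \cdot \Lambda(P,n)/n \cdot (1+o(1))$-type quantities only on average, so here I need the pointwise statement: the link of $v$ in $\blow{E}{V_1,\dots,V_k}$ is itself (a subgraph of) a blowup of the "link pattern" of $i$ in $P$ on $n-1$ vertices, whose number of edges is at most $\lambda' \binom{n-1}{r-1} + O(\binom{n-1}{r-2})$ where $\lambda' \le$ the maximum density of such a link, and a short argument using $\lambda(P) = \max$ over the simplex shows $\lambda' \le \lambda(P)$. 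Thus $d_{\mathcal{H}}(v) \le (\lambda + o(1))\binom{n-1}{r-1} \le (\lambda + \delta)\binom{n-1}{r-1}$ for $n \ge N_0$.

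The main obstacle is Step~(3): turning the "average degree $\le r\Lambda(P,n)/n$" bound into a pointwise maximum-degree bound valid for every vertex, using only the minimality of $P$. The clean route is to observe that for a vertex $v$ in part $V_i$ of a $P$-construction, the link $L_v$ is contained in the blowup (with respect to $V_1,\dots,V_{i}\setminus\{v\},\dots,V_k$) of the pattern $P^{(i)}$ on $[k]$ whose edge set consists of all $(r-1)$-multisets $Y'$ on $[k]$ such that $Y' \cup \{i\}$ (as an $r$-multiset) lies in $E$. One then needs $|L_v| \le (\lambda(P) + o(1))\binom{n-1}{r-1}$; this follows because the maximum of $\sum_{Y' \in E(P^{(i)})} c(Y') \prod_{j} x_j^{m_j(Y')}$ over the simplex, call it $\mu_i$, satisfies $\mu_i \le \lambda(P)$ — which is exactly the derivative/Euler-type relation for the Lagrangian polynomial (the link density at $v$ is the partial derivative with respect to coordinate $i$, which at the maximizer cannot exceed the value $r\lambda(P)$ divided by... ) so care is needed with the constant factor $r$. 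I would handle the constant by comparing against the balanced construction directly rather than via calculus: take the $P$-construction $\mathcal{G}$ on $n-1$ vertices that maximizes $|\mathcal{G}|$ subject to $v$'s part being nonempty, add $v$ to that part to get a $P$-construction on $n$ vertices, and note $d(v) = |\mathcal{G}'| - |\mathcal{G}| = \Lambda$-type differences, which telescope to give $d(v) \le \Lambda(P,n) - \Lambda(P,n-1) + O(\binom{n}{r-2})$; and since $\Lambda(P,n)/\binom{n}{r}$ is nonincreasing with limit $\lambda(P)$, a convexity/monotonicity argument bounds this difference by $(\lambda(P) + o(1))\binom{n-1}{r-1}$. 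The remaining details — choosing $\varepsilon$ after $\beta$ after $\eta$, and absorbing all $O(\binom{n}{r-2}) = o(\binom{n-1}{r-1})$ error terms into $\delta$ by taking $N_0$ large — are routine.
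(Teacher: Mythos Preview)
The paper does not prove this lemma; it is imported verbatim from~\cite[Lemma~21]{LP22}, so there is no in-paper argument to compare against. That said, your proposal has a genuine gap in Step~(3).

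The assertion that the link density satisfies $\lambda' \le \lambda(P)$, where $\lambda'$ is the \emph{Lagrangian} (i.e.\ the maximum over the simplex) of the link pattern $P^{(i)}$, is false. Take $P = K_2$ (so $r=2$, $\lambda(P)=1/2$): the link pattern at either vertex is the $1$-uniform pattern $\{\{2\}\}$, whose Lagrangian is $\max_{x_1+x_2=1} x_2 = 1 > 1/2$. Concretely, in the bipartite blowup with $|V_1| = 0.1n$, $|V_2| = 0.9n$, every vertex of $V_1$ has degree $0.9n \approx 0.9\binom{n-1}{1}$, far above $(\lambda+\delta)\binom{n-1}{1}$. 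Your alternative route via $\Lambda(P,n)-\Lambda(P,n-1)$ does not help either: that difference bounds the degree of a vertex added to an \emph{extremal} $P$-construction, not to the specific blowup with proportions $\alpha$ that $\mathcal{H}$ sits inside. Step~(2) as you state it (all $\alpha_i \ge \beta$ for some small $\beta$) is too weak to rule out configurations like $(0.1,0.9)$.

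What is missing is the stronger consequence of the minimum-degree hypothesis: from $\delta(\mathcal{H}) \ge (\lambda-\varepsilon)\binom{n-1}{r-1}$ one gets $|\blow{E}{V_1,\dots,V_k}| \ge |\mathcal{H}| \ge (\lambda-\varepsilon)\binom{n}{r}$, hence the Lagrange polynomial satisfies $p(\alpha) \ge \lambda - \varepsilon - o(1)$. By compactness of the simplex and continuity of $p$, this forces $\alpha$ to lie within $\gamma(\varepsilon)$ of the set of maximizers of $p$, with $\gamma \to 0$ as $\varepsilon \to 0$. Minimality of $P$ guarantees every maximizer $\alpha^\ast$ is interior, and at an interior maximizer the KKT conditions together with Euler's identity for the degree-$r$ homogeneous polynomial $p$ give $\partial_i p(\alpha^\ast) = r\lambda$ for every $i$, i.e.\ each link density $q_i(\alpha^\ast) = \lambda$. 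Continuity of $q_i$ then yields $q_i(\alpha) \le \lambda + \delta$ for $\varepsilon$ small enough, and hence $d_{\mathcal{H}}(v) \le d_{\blow{E}{V_1,\dots,V_k}}(v) = (q_i(\alpha)+o(1))\binom{n-1}{r-1} \le (\lambda+\delta)\binom{n-1}{r-1}$. Your outline has most of the pieces but skips precisely this near-maximizer step, without which the pointwise degree bound fails.
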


Let us add another remark about the weak vertex-extendability that might be useful
for readers who are familiar with the stability method. 
In a standard stability argument in determining the exact value of $\mathrm{ex}(n,F)$, one usually defines a set $\mathcal{B}$ of bad edges and a set $\mathcal{M}$ of missing edges, and then tries to prove that $|\mathcal{M}| > |\mathcal{B}|$. 
One key step in this argument is to prove that the maximum degree of $\mathcal{B}$ is small (more specifically, $\Delta(B) = o(n^{r-1})$), which, informally speaking, usually implies the weak vertex-extendability of $F$.

For a Tur\'{a}n pair $(F,P)$ with the weak vertex-extendability, we have the following result concerning the boundedness of $F$. 

\begin{theorem}\label{THM:vertex-extend-bounded}
    Suppose that $F$ is an $r$-graph and $P$ is a minimal pattern such that $F$ is edge-stable and weakly vertex-extendable (or vertex-extendable) with respect to $P$. Then there exists a constant $c >0$ such that 
    $F$ is $\left(c \binom{n-1}{r-1}, \frac{1-\pi(F)}{8m} \binom{n-1}{r-1}\right)$-bounded for large $n$. 
\end{theorem}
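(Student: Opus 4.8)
\textbf{Proof proposal for Theorem~\ref{THM:vertex-extend-bounded}.}
The plan is to show that an $F$-free $r$-graph $\mathcal{H}$ on $n$ vertices with large average degree cannot have a vertex of too-large degree, and this proceeds by a vertex-deletion argument that peels off low-degree vertices to reach a configuration where edge-stability and weak vertex-extendability apply. First I would fix the constant: since $F$ is edge-stable with respect to $P$, apply the edge-stability hypothesis with $\delta$ chosen small enough relative to $\frac{1-\pi(F)}{8m}$ (so that a $P$-subconstruction carrying all but a $\delta$-fraction of the possible edges still controls degrees via Lemma~\ref{LEMMA:LP-pattern-min-max}); this yields a stability parameter $\zeta>0$. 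Then set $c := \zeta/2$ (up to minor adjustments), so that the boundedness hypothesis $d(\mathcal{H}) \ge d(n,F) - c\binom{n-1}{r-1}$ translates, using $d(n,F) = r\,\mathrm{ex}(n,F)/n \ge r\pi(F)\binom{n}{r}/n = \pi(F)\binom{n-1}{r-1}$, into $|\mathcal{H}| \ge \left(\pi(F) - \zeta\right)\binom{n}{r}$ roughly.

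The core of the argument is the following cleaning step. Suppose for contradiction that $\mathcal{H}$ is $F$-free with $d(\mathcal{H}) \ge d(n,F) - c\binom{n-1}{r-1}$ but $\Delta(\mathcal{H}) > d(n,F) + \frac{1-\pi(F)}{8m}\binom{n-1}{r-1}$; let $u$ be a vertex of maximum degree. Starting from $\mathcal{H}$, repeatedly delete a vertex of minimum degree \emph{other than} $u$, as long as the current graph $\mathcal{H}_j$ on $n-j$ vertices has minimum degree (over $V(\mathcal{H}_j)\setminus\{u\}$) below $\left(\pi(F)-\zeta\right)\binom{n-j-1}{r-1}$. A counting argument shows this process cannot run too long: each deletion removes few edges (at most $\left(\pi(F)-\zeta\right)\binom{n-j-1}{r-1}$), so after $j$ steps the number of edges is still at least $|\mathcal{H}| - j\cdot\left(\pi(F)-\zeta\right)\binom{n}{r-1}$, which, compared against the trivial bound $\binom{n-j}{r}$, forces $j = O(\zeta n)$; in particular the process stops at some $\mathcal{H}^* $ on $n^* = n - O(\zeta n)$ vertices with $u \in V(\mathcal{H}^*)$, with $\delta(\mathcal{H}^* - u)$ (equivalently $\delta(\mathcal{H}^*)$, after verifying $u$ itself still has large degree) at least $\left(\pi(F)-\zeta\right)\binom{n^*-1}{r-1}$. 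Since edges were only deleted, $\mathcal{H}^*$ is still $F$-free, and by the edge count it satisfies the hypothesis of edge-stability, so there is a $P$-subconstruction $\mathcal{H}'\subseteq \mathcal{H}^*$ on $n^*$ vertices missing only a $\delta$-fraction of edges; a short argument (removing the at most $\delta\binom{n^*}{r}/\delta'$ vertices incident to many non-$P$ edges) upgrades this to: $\mathcal{H}^* - W$ is a $P$-subconstruction for a small set $W$, and then by iterating weak vertex-extendability along the vertices of $W$ — or more directly, by deleting $W$ and one additional vertex and invoking weak vertex-extendability once — we conclude $d_{\mathcal{H}^*}(u) \le \left(\pi(F)+\delta\right)\binom{n^*-1}{r-1}$. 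Since degrees only decrease under deletion and $\binom{n^*-1}{r-1} \le \binom{n-1}{r-1}$, this contradicts $d_{\mathcal{H}}(u) = \Delta(\mathcal{H}) > d(n,F) + \frac{1-\pi(F)}{8m}\binom{n-1}{r-1} \ge \left(\pi(F)+\frac{1-\pi(F)}{8m}\right)\binom{n-1}{r-1}$ once $\delta < \frac{1-\pi(F)}{8m}$, completing the proof.

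The main obstacle I anticipate is the bookkeeping in the cleaning step: one must ensure simultaneously that (i) the vertex $u$ of maximum degree is never deleted and retains degree $\gtrsim \pi(F)\binom{n^*-1}{r-1}$ after the peeling (so that the minimum-degree condition of weak vertex-extendability holds for $\mathcal{H}^*$ including $u$), (ii) the total number of deleted vertices is genuinely $o(n)$ and in fact small enough that the constants $\zeta, \delta$ on the $n^*$-vertex graph are still usable, and (iii) passing from the "$\mathcal{H}'$ is a large $P$-subconstruction" conclusion of edge-stability to the "$\mathcal{H}^* - W$ is a $P$-subconstruction" form needed to apply vertex-extendability — this last symmetrization/cleaning is standard but needs the minimality of $P$ (via Lemma~\ref{LEMMA:LP-pattern-min-max}) to argue that the few "bad" vertices can be removed without destroying the minimum-degree hypothesis. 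If one instead has full vertex-extendability (not just the weak form), step (iii) directly yields that $\mathcal{H}^*$ itself is a $P$-subconstruction, and then Lemma~\ref{LEMMA:LP-pattern-min-max} bounds $\Delta(\mathcal{H}^*)$ directly, which is cleaner; the weak case requires the extra observation that weak vertex-extendability is exactly the statement controlling $d(u)$, which is all we need for boundedness. One should also double-check the elementary inequality $d(n,F) \ge \pi(F)\binom{n-1}{r-1}$ and that $\mathrm{ex}(n,F) - \mathrm{ex}(n^*,F)$ is comparable to what the peeling removes, but these follow from $\mathrm{ex}(n,F) \ge \pi(F)\binom{n}{r}$ (monotonicity of $\mathrm{ex}(n,F)/\binom{n}{r}$) and are routine.
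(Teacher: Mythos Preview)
Your overall strategy --- assume a counterexample, apply edge-stability to locate a large $P$-subconstruction, clean to get high minimum degree, and then invoke weak vertex-extendability to bound the degree of the offending vertex --- is exactly what the paper does. The paper's execution, however, is considerably simpler and avoids the bookkeeping you flag as obstacles. Rather than iteratively peeling low-degree vertices from $\mathcal{H}$ while protecting $u$, the paper first \emph{removes} the max-degree vertex $v$, applies edge-stability directly to $\mathcal{H}_0 := \mathcal{H}-v$ (which still has nearly $\mathrm{ex}(n,F)$ edges) to obtain a $P$-subconstruction $\mathcal{H}_1 \subset \mathcal{H}_0$, and then does a single one-shot removal of the low-degree set $Z$ of $\mathcal{H}_1$ via Fact~\ref{FACT:small-deg-set-Z} to get $\mathcal{H}_2 := \mathcal{H}_1[V_0\setminus Z]$ with $\delta(\mathcal{H}_2)\ge (\pi(F)-3\varepsilon_1^{1/2})\binom{n-1}{r-1}$. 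Finally it \emph{reattaches} $v$ by setting $\mathcal{H}_3 := \mathcal{H}_2 \cup \{e\in \mathcal{H}[V\setminus Z]: v\in e\}$; this $\mathcal{H}_3$ is $F$-free (being a subgraph of $\mathcal{H}$), has high minimum degree, satisfies $\mathcal{H}_3 - v = \mathcal{H}_2$ a $P$-subconstruction, and $d_{\mathcal{H}_3}(v)$ is still at least $(\pi(F)+\tfrac{1-\pi(F)}{9m})\binom{n-1}{r-1}$ since $|Z|\le \varepsilon_1^{1/2}n$ --- contradicting weak vertex-extendability directly.

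Compared to this, your proposal has one imprecision worth flagging: the sentence ``$\mathcal{H}^* - W$ is a $P$-subconstruction'' is not what edge-stability gives you. Edge-stability yields only a \emph{spanning subgraph} $\mathcal{H}'\subseteq \mathcal{H}^*$ that is a $P$-subconstruction; the induced subgraph $\mathcal{H}^*[V^*\setminus W]$ need not be one, no matter how $W$ is chosen. What you actually need (and what the paper does) is to build an auxiliary $F$-free graph by taking the cleaned $P$-subconstruction and adjoining $u$ together with all of $u$'s edges into the remaining vertex set --- this is the graph to which weak vertex-extendability applies. Once you do this, your preliminary peeling step becomes unnecessary: the only min-degree cleaning required is on the $P$-subconstruction $\mathcal{H}_1$, and Fact~\ref{FACT:small-deg-set-Z} handles that in one stroke. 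Lemma~\ref{LEMMA:LP-pattern-min-max} is not actually invoked in the proof.
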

\textbf{Remark.}
It seems possible to extend
Theorems~\ref{THM:Turan-pair-smooth} and~\ref{THM:vertex-extend-bounded} to nonminimal patterns, 
but we do not aware of any $r$-graph $F$ whose extremal construction is a $P$-construction for some nonminimal pattern $P$.
However, there does exist a finite family $\mathcal{F}$ of $r$-graphs whose extremal construction is a $P$-construction for some nonminimal pattern $P$ (see~\cite{HLLMZ22} for more details).
 
In many cases, (weak) vertex-extendability of $F$ follows from a stronger type of stability that was studied by many researchers before. 
Suppose that $(F,P)$ is a Tur\'{a}n pair.
We say $F$ is \textbf{degree-stable} with respect to $P$ if there exists $\zeta>0$ such that
for large $n$ every $n$-vertex $F$-free $r$-graph $\mathcal{H}$ with $\delta(\mathcal{H}) \ge \left(\pi(F) - \zeta\right) \binom{n-1}{r-1}$ is a $P$-subconstruction.  
It is easy to observe from the definition that if $F$ is degree-stable with respect to $P$, 
then $F$ is edge-stable and vertex-extendable with respect to $P$. 
Therefore, we have the following corollary of Theorems~\ref{THM:Turan-pair-smooth} and~\ref{THM:vertex-extend-bounded}.

\begin{corollary}\label{CORO:degree-stable-smooth-bounded}
    Suppose that $F$ is an $r$-graph and $P$ is a minimal pattern such that $F$ is degree-stable with respect to $P$. Then there exists a constant $c >0$ such that 
    \begin{enumerate}[label=(\alph*)]
        \item  
            $\mathrm{ex}(n,F)$ is $4\binom{n-1}{r-2}$-smooth, and 
        \item
            $F$ is $\left(c \binom{n-1}{r-1}, \frac{1-\pi(F)}{8m} \binom{n-1}{r-1}\right)$-bounded. 
    \end{enumerate}
\end{corollary}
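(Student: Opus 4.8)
The plan is to derive Corollary~\ref{CORO:degree-stable-smooth-bounded} as an essentially immediate consequence of Theorems~\ref{THM:Turan-pair-smooth} and~\ref{THM:vertex-extend-bounded}, the only genuine content being the verification that degree-stability is strong enough to trigger both of those results. First I would record the elementary fact, already asserted in the text right before the corollary, that if $F$ is degree-stable with respect to a minimal pattern $P$, then $(F,P)$ is in particular a Tur\'an pair with $F$ both edge-stable and vertex-extendable with respect to $P$. Indeed, degree-stability says every $n$-vertex $F$-free $\mathcal{H}$ with $\delta(\mathcal{H}) \ge (\pi(F)-\zeta)\binom{n-1}{r-1}$ is a $P$-subconstruction; taking $\mathcal{H}$ to be an extremal $F$-free graph (whose minimum degree satisfies this bound for large $n$ by Fact~\ref{FACT:min-degree-extremal} together with $\pi(F)>0$) shows every maximum $F$-free construction is a $P$-construction, giving the Tur\'an pair property. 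Edge-stability then follows since any $F$-free $\mathcal{H}$ with almost-extremal edge count can, after deleting a sublinear-degree set of vertices, be assumed to have large minimum degree, and is hence a $P$-subconstruction on the retained vertices; vertex-extendability follows because the hypothesis of the vertex-extendability definition already forces $\delta(\mathcal{H}) \ge (\pi(F)-\zeta)\binom{n-1}{r-1}$, so $\mathcal{H}$ itself is a $P$-subconstruction.

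With these reductions in hand, part~(a) is exactly the conclusion of Theorem~\ref{THM:Turan-pair-smooth}: since $(F,P)$ is a Tur\'an pair with $P$ a minimal pattern, $\mathrm{ex}(n,F)$ is $4\binom{n-1}{r-2}$-smooth, verbatim. Part~(b) is exactly the conclusion of Theorem~\ref{THM:vertex-extend-bounded}: since $F$ is edge-stable and vertex-extendable (a fortiori weakly vertex-extendable, via Lemma~\ref{LEMMA:LP-pattern-min-max} and~(\ref{equ:Turan-density-Lagrangian})) with respect to the minimal pattern $P$, there is a constant $c>0$ for which $F$ is $\bigl(c\binom{n-1}{r-1},\ \tfrac{1-\pi(F)}{8m}\binom{n-1}{r-1}\bigr)$-bounded for large $n$. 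Nothing further is required; one just notes that the constants $N_0$ and $c$ produced by the two theorems can be taken as a common pair by enlarging $N_0$ and shrinking $c$.

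The only place where a little care is needed — and hence the main (though still mild) obstacle — is the passage from ``almost-extremal in edge count'' to ``almost-extremal in minimum degree'' used to deduce edge-stability from degree-stability. This is the standard symmetrisation/min-degree-cleaning step: given an $F$-free $\mathcal{H}$ on $n$ vertices with at least $(\pi(F)-\zeta)\binom{n}{r}$ edges, one repeatedly deletes vertices of degree below $(\pi(F)-\sqrt{\zeta})\binom{n-1}{r-1}$; since $\pi(F)>0$ the deletion process cannot remove more than $O(\sqrt{\zeta}\,n)$ vertices before the edge density would drop too far, leaving a large $F$-free induced subgraph of high minimum degree, to which degree-stability applies. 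One should double-check that the resulting $P$-subconstruction, after adding back the deleted vertices, still retains a $(\pi(F)-\delta)\binom{n}{r}$ fraction of edges — which it does because only $O(\sqrt{\zeta}\,n^{r-1}) + O(\sqrt{\zeta}\,n)\cdot\binom{n-1}{r-1}$ edges are lost, a quantity that is $o(n^r)$ and in particular below $\delta\binom{n}{r}$ once $\zeta$ is chosen small in terms of $\delta$. Apart from tracking these elementary quantitative relations between $\zeta$, $\delta$, and the deletion count, the corollary is a direct corollary in the literal sense.
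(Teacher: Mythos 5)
Your proposal is correct and follows the paper's own route: the corollary is obtained by observing that degree-stability implies edge-stability and vertex-extendability and then quoting Theorem~\ref{THM:Turan-pair-smooth} for part~(a) and Theorem~\ref{THM:vertex-extend-bounded} for part~(b); your sketches of the two implications (the standard low-degree-cleaning argument for edge-stability, e.g.\ via Fact~\ref{FACT:small-deg-set-Z}, and the immediate one for vertex-extendability) are exactly the ``easy to observe'' step the paper leaves implicit. One caveat: your re-derivation of the Tur\'an-pair property is both unnecessary and, as written, not justified --- unnecessary because the paper defines degree-stability (like the other stability notions) only under the standing assumption that $(F,P)$ is a Tur\'an pair, so this is part of the hypothesis; and unjustified because Fact~\ref{FACT:min-degree-extremal} only yields $\delta(\mathcal{H})\ge\delta(n,F)$, while the lower bound $\delta(n,F)\ge\left(\pi(F)-\zeta\right)\binom{n-1}{r-1}$ is supplied by Fact~\ref{FACT:(F,P)-Turan-Pair}, which itself presupposes the Tur\'an-pair property, so invoking it there would be circular. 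Since that step is superfluous, the proof stands as essentially the paper's argument.
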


In the next section, we show some applications of Theorems~\ref{THM:Main-simple},~\ref{THM:Turan-pair-smooth} and~\ref{THM:vertex-extend-bounded}, and Corollary~\ref{CORO:degree-stable-smooth-bounded}. 
We omit the applications of Theorem~\ref{THM:main-rainbow} since they are quite straightforward to obtain once we present the corresponding applications of Theorem~\ref{THM:Main-simple}. 
The proofs for Theorems~\ref{THM:Main-simple} and~\ref{THM:main-rainbow} are included in Section~\ref{SEC:Proof-Main}.  
The proofs for Theorems~\ref{THM:Turan-pair-smooth} and~\ref{THM:vertex-extend-bounded} are included in Section~\ref{SEC:Proof-applications}. 
\section{Applications}\label{SEC:Applications}
Combining some known stability results with Theorems~\ref{THM:Main-simple},~\ref{THM:Turan-pair-smooth}, and~\ref{THM:vertex-extend-bounded} (or Corollary~\ref{CORO:degree-stable-smooth-bounded}) we can immediately obtain results in this section. 
To demonstrate a way to apply Theorems~\ref{THM:Main-simple},~\ref{THM:Turan-pair-smooth}, and~\ref{THM:vertex-extend-bounded} in general, 
we include the short proof for the weak vertex-extendability of $\mathbb{F}_{3,2}$ (even though it can be deduced from results in~\cite{FPS053Book3page}). 

\subsection{Edge-critical graphs}\label{SUBSEC:edge-critical-graphs}
Recall that for a graph $F$ its chromatic number is denoted by $\chi(F)$.
We say a graph $F$ is \textbf{edge-critical} if there exists an edge $e\in F$ such that $\chi(F-e) < \chi(F)$.
Using the stability method, Simonovits proved in~\cite{SI68} that if a graph $F$ is edge-critical and $\chi(F) \ge 3$, then $\mathrm{EX}(n,F) = \{T(n, \chi(F)-1)\}$ for all sufficiently large $n$. 

Extending the classical Andr\'{a}sfai--Erd\H{o}s--S\'{o}s Theorem~\cite{AES74}, 
Erd\H{o}s and Simonovits~\cite{ES73} proved that every edge-critical graph with chromatic number at least $3$ is degree-stable. 
Theorefore, combined with Theorem~\ref{THM:Main-simple} and Corollary~\ref{CORO:degree-stable-smooth-bounded}, we obtain the following result.

\begin{theorem}\label{THM:edge-critical}
    Suppose that $F$ is an edge-critical graph with $\chi(F) \ge 3$.
    Then there exist constants $N_0$ and $c_F>0$ such that for all integers $n\ge N_0$ and $t \in [0, c_F n]$ we have 
    \begin{align*}
        \mathrm{EX}(n, (t+1)F) = \left\{K_{t}\uproduct T(n-t, \chi(F)-1)\right\}. 
    \end{align*}
\end{theorem}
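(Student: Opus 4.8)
The plan is to reduce Theorem~\ref{THM:edge-critical} to a direct invocation of Theorem~\ref{THM:Main-simple} together with Corollary~\ref{CORO:degree-stable-smooth-bounded}. Given an edge-critical graph $F$ with $\chi(F) \ge 3$, set $\ell := \chi(F) - 1 \ge 2$ and let $P := K_{\ell}$ be the pattern whose blowups are the complete $\ell$-partite graphs. First I would record that $(F, K_\ell)$ is a Tur\'an pair for large $n$: Simonovits' theorem~\cite{SI68} gives $\mathrm{EX}(n, F) = \{T(n,\ell)\}$ for $n$ large, every $K_\ell$-construction is $(\ell{+}1)$-colourable and hence $F$-free since $\chi(F) = \ell + 1$, and the unique maximum $F$-free graph $T(n,\ell)$ is a $K_\ell$-construction. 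It is also routine that $K_\ell$ is a minimal pattern (deleting any part strictly decreases the Lagrangian, as $\lambda(K_{\ell-1}) = \frac{\ell-2}{\ell-1} < \frac{\ell-1}{\ell} = \lambda(K_\ell)$). By the Erd\H{o}s--Simonovits degree-stability result~\cite{ES73}, $F$ is degree-stable with respect to $K_\ell$.

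Next I would feed this into Corollary~\ref{CORO:degree-stable-smooth-bounded}: since $F$ is degree-stable with respect to the minimal pattern $K_\ell$, there is a constant $c > 0$ such that $\mathrm{ex}(n,F)$ is $4\binom{n-1}{r-2}$-smooth and $F$ is $\bigl(c\binom{n-1}{r-1}, \frac{1-\pi(F)}{8m}\binom{n-1}{r-1}\bigr)$-bounded, where here $r = 2$ and $m = v(F)$. I then need to check that these quantities satisfy the hypotheses (a) and (b) of Theorem~\ref{THM:Main-simple} with the roles of $\binom{n}{r-1}$. For $r = 2$ we have $\binom{n}{r-1} = n$ and $\binom{n-1}{r-2} = \binom{n-1}{0} = 1$, so the smoothness bound $4$ is certainly at most $\frac{1-\pi(F)}{8m}\binom{n}{1} = \frac{1-\pi(F)}{8m} n$ for large $n$, giving (b); and the boundedness statement from the corollary, with $c\binom{n-1}{1} = c(n-1) \ge c'\binom{n}{1}$ for a slightly smaller constant $c'$ and $\frac{1-\pi(F)}{8m}\binom{n-1}{1} \le \frac{1-\pi(F)}{4m}\binom{n}{1}$, supplies (a) after possibly shrinking the constant. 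So Theorem~\ref{THM:Main-simple} applies.

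Theorem~\ref{THM:Main-simple} then yields directly that for $n$ large and $t \le c_F n$ (with $c_F := \min\{\frac{c'}{4erm}, \frac{1-\pi(F)}{64rm^2}\}$),
\begin{align*}
    \mathrm{EX}\bigl(n, (t+1)F\bigr) = K_t^r \uproduct \mathrm{EX}(n-t, F).
\end{align*}
Since $r = 2$, $K_t^r = K_t$, and by Simonovits' theorem $\mathrm{EX}(n-t, F) = \{T(n-t, \ell)\} = \{T(n-t, \chi(F)-1)\}$ provided $n - t$ is large, which holds since $t \le c_F n$ forces $n - t \ge (1 - c_F)n \to \infty$. Substituting gives $\mathrm{EX}(n,(t+1)F) = \{K_t \uproduct T(n-t,\chi(F)-1)\}$, as claimed.

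I expect the only genuine content to be the bookkeeping in the second paragraph: verifying that the constants coming out of Corollary~\ref{CORO:degree-stable-smooth-bounded} (stated in terms of $\binom{n-1}{r-1}$ and $\binom{n-1}{r-2}$) really do fit the slightly different normalisations ($\binom{n}{r-1}$) demanded by Theorem~\ref{THM:Main-simple}, and handling the degenerate binomial $\binom{n-1}{r-2} = 1$ when $r = 2$. This is purely a constant-chasing exercise with no conceptual obstacle; everything substantive has been outsourced to~\cite{SI68}, \cite{ES73}, and the general machinery of Theorems~\ref{THM:Main-simple} and~\ref{THM:vertex-extend-bounded}. One should also double-check that Simonovits' exact result is quoted for the range of $n$ that we use, but since we only need it for sufficiently large $n$ and for $n - t$ which is also large, this causes no difficulty.
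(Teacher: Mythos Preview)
Your proposal is correct and follows essentially the same route as the paper: invoke the Erd\H{o}s--Simonovits degree-stability result for edge-critical graphs, feed it into Corollary~\ref{CORO:degree-stable-smooth-bounded} to obtain smoothness and boundedness, and then apply Theorem~\ref{THM:Main-simple} together with Simonovits' exact result $\mathrm{EX}(n-t,F)=\{T(n-t,\chi(F)-1)\}$. The constant-matching you flag between $\binom{n-1}{r-1}$ and $\binom{n}{r-1}$ is indeed just bookkeeping.
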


\textbf{Remarks.}
\begin{itemize}
    \item For Theorem~\ref{THM:edge-critical} and all other theorems in this section, we did not try to optimize the constant $c_F$, but it seems possible to obtain a reasonable bound\footnote{It seems possible to get a polynomial dependency between $c_F$ and $\frac{1}{rm}$.} for $c_F$ by a more careful analysis of the proof for Theorem~\ref{THM:vertex-extend-bounded} (and the proof for the (weak) vertex-extendability of $F$ in some cases). 
    \item The case when $F$ is an odd cycle was also considered in a recent paper~{\cite[Theorem~1.1]{LMH23}}. 
    \item It might be true that Theorem~\ref{THM:edge-critical} holds for a broader class of graphs, and it would be interesting to characterize the class of graphs for which Theorem~\ref{THM:edge-critical} holds. 
\end{itemize}

\subsection{The Fano plane}\label{SUBSEC:Fano}
The \textbf{Fano plane} $\mathbb{F}$ is a $3$-graph with vertex set $\{1,2,3,4,5,6,7\}$ and edge set
\begin{align*}
    \{123,345,561,174,275,376,246\}. 
\end{align*}
Let $[n] = V_1 \cup V_2$ be a partition with $|V_1| = \lfloor n/2 \rfloor$ and $|V_2| = \lceil n/2 \rceil$. 
Let $B_3(n)$ denote the $3$-graph on $[n]$ whose edge set consists of all triples that have a nonempty intersection with both $V_1$ and $V_2$.  
Note that $|B_3(n)| \sim \frac{3}{4}\binom{n}{3}$. 

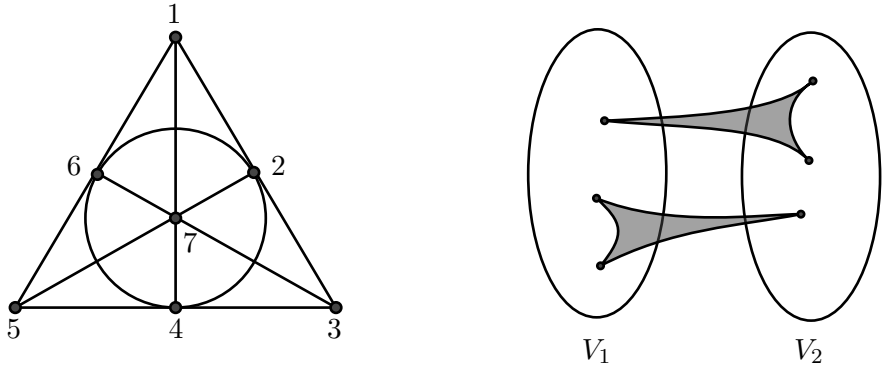
\begin{figure}[htbp]
\centering

\tikzset{every picture/.style={line width=1pt}} 

\begin{tikzpicture}[x=0.75pt,y=0.75pt,yscale=-1,xscale=1,line join=round]

\draw   (389.01,216.91) .. controls (369.98,216.81) and (354.71,184.29) .. (354.92,144.28) .. controls (355.12,104.27) and (370.71,71.91) .. (389.74,72.01) .. controls (408.77,72.11) and (424.03,104.62) .. (423.83,144.63) .. controls (423.63,184.64) and (408.04,217) .. (389.01,216.91) -- cycle ;
\draw   (496.07,218.55) .. controls (477.04,218.56) and (461.59,186.13) .. (461.57,146.12) .. controls (461.55,106.11) and (476.96,73.66) .. (495.99,73.65) .. controls (515.02,73.64) and (530.47,106.07) .. (530.49,146.08) .. controls (530.51,186.1) and (515.1,218.54) .. (496.07,218.55) -- cycle ;
\draw [fill=uuuuuu, fill opacity=0.5]  (393,118) .. controls (431,116) and (479,115) .. (497,98)  .. controls (480,110) and (484,127) .. (495,138) .. controls (478,120) and (434,122) .. (393,118) -- cycle;
\draw  [fill=uuuuuu, fill opacity=0.5]  (389,157) .. controls (425,172) and (472,165) .. (491,165) .. controls (460,171) and (427,172) .. (391,191) .. controls (404,175) and (402,169) .. (389,157) --cycle;
\draw   (134,167) .. controls (134,142.15) and (154.15,122) .. (179,122) .. controls (203.85,122) and (224,142.15) .. (224,167) .. controls (224,191.85) and (203.85,212) .. (179,212) .. controls (154.15,212) and (134,191.85) .. (134,167) -- cycle ;
\draw   (179,76) -- (259,212) -- (99,212) -- cycle ;
\draw    (179,76) -- (179,212) ;
\draw    (99,212) -- (218,144) ;
\draw    (141,145) -- (259,212) ;
\draw [fill=uuuuuu] (393,118) circle (1.2pt);
\draw [fill=uuuuuu]  (389,157) circle (1.2pt);
\draw [fill=uuuuuu] (491,165) circle (1.2pt);
\draw [fill=uuuuuu] (391,191) circle (1.2pt);
\draw [fill=uuuuuu] (497,98) circle (1.2pt);
\draw [fill=uuuuuu] (495,138) circle (1.2pt);
\draw (380,226) node [anchor=north west][inner sep=0.75pt]   [align=left] {$V_1$};
\draw (486,226) node [anchor=north west][inner sep=0.75pt]   [align=left] {$V_2$};
\draw [fill=uuuuuu] (179,76) circle (2pt);
\draw (173,58) node [anchor=north west][inner sep=0.75pt]   [align=left] {$1$};
\draw [fill=uuuuuu] (218,144) circle (2pt);
\draw (225,134) node [anchor=north west][inner sep=0.75pt]   [align=left] {$2$};
\draw [fill=uuuuuu] (259,212) circle (2pt);
\draw (253,216) node [anchor=north west][inner sep=0.75pt]   [align=left] {$3$};
\draw [fill=uuuuuu] (179,212) circle (2pt);
\draw (174,216) node [anchor=north west][inner sep=0.75pt]   [align=left] {$4$};
\draw [fill=uuuuuu] (99,212) circle (2pt);
\draw (93,216) node [anchor=north west][inner sep=0.75pt]   [align=left] {$5$};
\draw [fill=uuuuuu] (140,145) circle (2pt);
\draw (123,134) node [anchor=north west][inner sep=0.75pt]   [align=left] {$6$};
\draw [fill=uuuuuu] (179,167) circle (2pt);
\draw (181,173) node [anchor=north west][inner sep=0.75pt]   [align=left] {$7$};

\end{tikzpicture}

\caption{The Fano plane and the complete bipartite $3$-graph $B_3(n)$.}
\label{fig:Fano}
\end{figure}

It was conjectured by S\'{o}s~\cite{Sos76} and famously proved by De Caen and F\"uredi~\cite{DF00} that $\pi(\mathbb{F}) = 3/4$. 
Later, using a stability argument, Keevash and Sudakov~\cite{KS05}, and independently, F\"uredi and Simonovits~\cite{FS05} proved that $\mathrm{EX}(n, \mathbb{F}) = \{B_3(n)\}$ for all sufficienly large $n$. 
Recently, Bellmann and Reiher~\cite{BR19} proved that $\mathrm{ex}(n, \mathbb{F}) = |B_3(n)| = \frac{n-2}{2}\lfloor \frac{n^2}{4}\rfloor$ for all $n \ge 7$, and moreover, they proved that $B_3(n)$ is the unique extremal construction for all $n \ge 8$. 

It follows from the result of Keevash and Sudakov~\cite{KS05}, and independently, F\"uredi and Simonovits~\cite{FS05} that $\mathbb{F}$ is degree-stable. Therefore, we obtain the following result. 

\begin{theorem}\label{THEOREM:Fano}
There exist constants $N_0$ and $c_{\mathbb{F}}>0$ such that for all integers $n\ge N_0$ and $t \in [0, c_{\mathbb{F}} n]$ we have 
\begin{align*}
    \mathrm{EX}(n, (t+1)\mathbb{F}) = \left\{K_{t}^{3} \uproduct B_3(n-t)\right\}.
\end{align*}
\end{theorem}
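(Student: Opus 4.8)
The plan is to deduce Theorem~\ref{THEOREM:Fano} directly from Theorem~\ref{THM:Main-simple} together with Corollary~\ref{CORO:degree-stable-smooth-bounded}, exactly as in the proof of Theorem~\ref{THM:edge-critical}. The pattern here is $P = (2, E)$ where $E$ consists of all $3$-multisets on $[2]$ that meet both parts, i.e. the multisets $\{1,1,2\}$ and $\{1,2,2\}$; its blowup on a balanced partition is precisely $B_3(n)$, so $\lambda(P) = 3/4 = \pi(\mathbb{F})$ and $(\mathbb{F}, P)$ is a Tur\'{a}n pair by the results of De Caen--F\"uredi~\cite{DF00} and Keevash--Sudakov~\cite{KS05}/F\"uredi--Simonovits~\cite{FS05}. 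One first checks that $P$ is a minimal pattern: deleting either part of $[2]$ leaves a pattern with empty edge set (every surviving multiset used both parts), so $\lambda(P-i) = 0 < 3/4 = \lambda(P)$ for $i \in \{1,2\}$.

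Next I would invoke the degree-stability of $\mathbb{F}$ with respect to $P$, which is exactly what the stability proofs of Keevash--Sudakov and F\"uredi--Simonovits establish: there is $\zeta > 0$ so that for large $n$ every $n$-vertex $\mathbb{F}$-free $3$-graph $\mathcal{H}$ with $\delta(\mathcal{H}) \ge (3/4 - \zeta)\binom{n-1}{2}$ is a $P$-subconstruction. By Corollary~\ref{CORO:degree-stable-smooth-bounded}, there is a constant $c > 0$ such that $\mathrm{ex}(n, \mathbb{F})$ is $4\binom{n-1}{1}$-smooth and $\mathbb{F}$ is $\bigl(c\binom{n-1}{2}, \tfrac{1-3/4}{8\cdot 7}\binom{n-1}{2}\bigr)$-bounded. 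Since $4\binom{n-1}{1} = 4(n-1)$ is eventually smaller than $\tfrac{1-3/4}{8\cdot 7}\binom{n-1}{2} = \tfrac{1}{224}\binom{n-1}{2}$, and $c\binom{n-1}{2}$ exceeds $c'\binom{n}{2}$ for a slightly smaller constant $c' > 0$, the hypotheses (a) and (b) of Theorem~\ref{THM:Main-simple} are met with $r = 3$, $m = 7$, $F = \mathbb{F}$, and an appropriate constant in place of its $c$ (one only needs to absorb the mild index shift from $\binom{n-1}{r-1}$ to $\binom{n}{r-1}$ and from $4\binom{n-1}{r-2}$ to $\tfrac{1-\pi(F)}{8m}\binom{n}{r-1}$, both of which hold for all large $n$).

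Applying Theorem~\ref{THM:Main-simple} then yields $N_0$ and a threshold $c_{\mathbb{F}} := \min\{\tfrac{c}{4\cdot e\cdot 3\cdot 7}, \tfrac{1-3/4}{64\cdot 3\cdot 49}\} > 0$ such that for all $n \ge N_0$ and $t \le c_{\mathbb{F}} n$,
\begin{align*}
    \mathrm{EX}(n, (t+1)\mathbb{F}) = K_t^3 \uproduct \mathrm{EX}(n-t, \mathbb{F}).
\end{align*}
Finally I would plug in the exact description of $\mathrm{EX}(n-t, \mathbb{F})$: by Bellmann--Reiher~\cite{BR19}, $\mathrm{EX}(n', \mathbb{F}) = \{B_3(n')\}$ for all $n' \ge 8$, so for $n \ge N_0$ large enough we get $\mathrm{EX}(n, (t+1)\mathbb{F}) = \{K_t^3 \uproduct B_3(n-t)\}$, which is the claimed statement. (If one only wanted to use the earlier stability papers, $\mathrm{EX}(n', \mathbb{F}) = \{B_3(n')\}$ for all sufficiently large $n'$ already suffices, since $n - t \ge (1 - c_{\mathbb{F}})N_0$ can be made large.)

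The only genuinely non-routine point is verifying that the degree-stability of $\mathbb{F}$ holds with respect to the specific minimal pattern $P$ described above, rather than merely edge-stability — but this is precisely what the Keevash--Sudakov and F\"uredi--Simonovits arguments give, since they show any near-extremal $\mathbb{F}$-free $3$-graph (in the minimum-degree sense) is a subgraph of some $B_3(n)$. Everything else is bookkeeping: confirming $P$ is minimal, matching the numerical forms of the smoothness and boundedness bounds to hypotheses (a) and (b) of Theorem~\ref{THM:Main-simple}, and substituting the known value of $\mathrm{EX}(n-t, \mathbb{F})$.
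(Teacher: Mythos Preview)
Your proposal is correct and follows exactly the approach the paper takes: invoke the degree-stability of $\mathbb{F}$ from Keevash--Sudakov/F\"uredi--Simonovits, apply Corollary~\ref{CORO:degree-stable-smooth-bounded} (after noting the pattern is minimal) to obtain the required smoothness and boundedness, and then apply Theorem~\ref{THM:Main-simple} together with the known description of $\mathrm{EX}(n-t,\mathbb{F})$. The paper states this in a single sentence, and your write-up simply spells out the routine bookkeeping.
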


\subsection{Generalized triangles}\label{SUBSEC:generalized-triangle}
The ($r$-uniform) \textbf{generalized triangle} $\mathds{T}_r$ is the $r$-graph with vertex set $[2r-1]$ and edge set  
\begin{align*}
    \left\{\{1,\ldots,r-1, r\}, \{1,\ldots, r-1, r+1\}, \{r,r+1, \ldots, 2r-1\}\right\}. 
\end{align*}
Note that $\mathds{T}_2$ is simply a triangle. 

Fix $n \ge r \ge 2$ and $\ell \ge r$. 
Let $[n] = V_1 \cup \cdots \cup V_{\ell}$ be a partition such that $|V_i| \in \left\{\lfloor \frac{n}{\ell}\rfloor, \lceil \frac{n}{\ell} \rceil\right\}$ for all $i\in [\ell]$. 
The \textbf{generalized Tur\'{a}n $r$-graph} $T_{r}(n,\ell)$ is the $r$-graph on $[n]$ whose edge set consists of all $r$-sets that contain at most one vertex from each $V_i$. 
Note that $T_2(n,\ell)$ is the Tur\'{a}n graph $T(n, \ell)$. 
Let $t_{r}(n,\ell)$ denote the number of edges in $T_{r}(n,\ell)$. 

\begin{figure}[htbp]
\centering

\tikzset{every picture/.style={line width=1pt}} 

\begin{tikzpicture}[x=0.75pt,y=0.75pt,yscale=-1,xscale=1,line join=round]

\draw   (417,88.5) .. controls (417,67.79) and (433.79,51) .. (454.5,51) .. controls (475.21,51) and (492,67.79) .. (492,88.5) .. controls (492,109.21) and (475.21,126) .. (454.5,126) .. controls (433.79,126) and (417,109.21) .. (417,88.5) -- cycle ;
\draw   (480,178.5) .. controls (480,157.79) and (496.79,141) .. (517.5,141) .. controls (538.21,141) and (555,157.79) .. (555,178.5) .. controls (555,199.21) and (538.21,216) .. (517.5,216) .. controls (496.79,216) and (480,199.21) .. (480,178.5) -- cycle ;
\draw   (356,178.5) .. controls (356,157.79) and (372.79,141) .. (393.5,141) .. controls (414.21,141) and (431,157.79) .. (431,178.5) .. controls (431,199.21) and (414.21,216) .. (393.5,216) .. controls (372.79,216) and (356,199.21) .. (356,178.5) -- cycle ;
\draw [fill=uuuuuu, fill opacity=0.5]   (159.54,93.17) .. controls (173.12,108.44) and (175.21,133.88) .. (164.76,151.69) .. controls (196.1,112.25) and (219.08,94.44) .. (258.78,69) .. controls (229.53,84.27) and (195.06,89.35) .. (159.54,93.17) -- cycle;
\draw [fill=uuuuuu, fill opacity=0.5]   (159.54,93.17) .. controls (194.01,107.16) and (230.57,128.79) .. (262.96,131.33) .. controls (222.22,131.33) and (197.14,130.06) .. (164.76,151.69)  .. controls (175.21,133.88) and (173.12,108.44) .. (159.54,93.17) -- cycle;
\draw [fill=uuuuuu, fill opacity=0.5]   (258.78,69) .. controls (276.54,77.9) and (275.5,131.33) .. (262.96,131.33) .. controls (280.72,140.24) and (276.54,202.57) .. (264,202.57) .. controls (295.34,191.12) and (283.85,77.9) .. (258.78,69) -- cycle;
\draw  [fill=uuuuuu, fill opacity=0.5]   (454.5,88.5) .. controls (465,132) and (483,161) .. (517.5,178.5) .. controls (481,158) and (441,156) .. (393.5,178.5) .. controls (423,158) and (456,121) .. (454.5,88.5) -- cycle;
\draw [fill=uuuuuu] (159.54,93.17) circle (2pt);
\draw [fill=uuuuuu] (164.76,151.69) circle (2pt);
\draw [fill=uuuuuu] (258.78,69) circle (2pt);
\draw [fill=uuuuuu] (262.96,131.33) circle (2pt);
\draw [fill=uuuuuu] (264,202.57) circle (2pt);
\draw [fill=uuuuuu] (517.5,178.5) circle (1.2pt);
\draw [fill=uuuuuu] (454.5,88.5) circle (1.2pt);
\draw [fill=uuuuuu] (393.5,178.5) circle (1.2pt);
\draw (145,86) node [anchor=north west][inner sep=0.75pt]   [align=left] {$1$};
\draw (149,144) node [anchor=north west][inner sep=0.75pt]   [align=left] {$2$};
\draw (254,74) node [anchor=north west][inner sep=0.75pt]   [align=left] {$3$};
\draw (257,136) node [anchor=north west][inner sep=0.75pt]   [align=left] {$4$};
\draw (258,208) node [anchor=north west][inner sep=0.75pt]   [align=left] {$5$};
\draw (443.99,29.21) node [anchor=north west][inner sep=0.75pt]   [align=left] {$V_1$};
\draw (514.99,222.21) node [anchor=north west][inner sep=0.75pt]   [align=left] {$V_2$};
\draw (379.99,221.21) node [anchor=north west][inner sep=0.75pt]   [align=left] {$V_3$};

\end{tikzpicture}

\caption{The generealized triangle $\mathds{T}_3$ and the Tur\'{a}n $3$-graph $T_{3}(n,3)$.}
\label{fig:T3}
\end{figure}
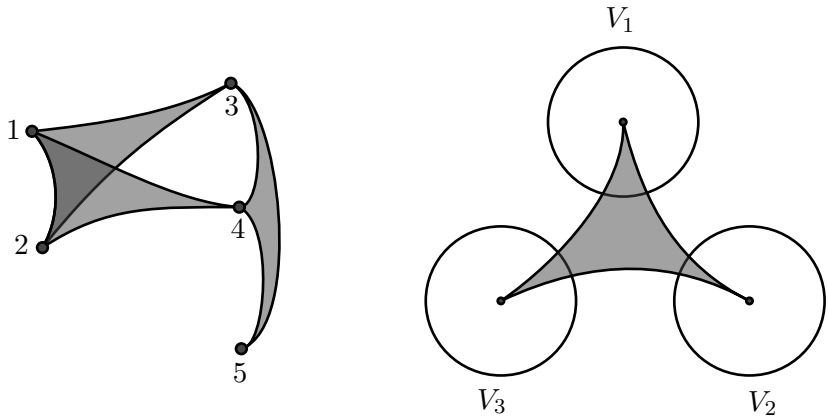

Katona conjectured and Bollob\'{a}s~\cite{BO74} proved that 
$\mathrm{EX}(n, \{\mathds{T}_3, K_{4}^{3-}\}) = \{T_3(n,3)\}$ for all $n\in \mathbb{N}$, 
where $K_{4}^{3-}$ is the unique $3$-graph with $4$ vertices and $3$ edges. 
Later, Frankl and F\"{u}redi~\cite{FF83} sharpened the result of Bollob\'{a}s by showing that $\mathrm{EX}(n, \mathds{T}_3) = \{T_3(n,3)\}$ for all $n\ge 3000$. 
In~\cite{KM04}, Keevash and Mubayi proved the edge-stability of $\mathds{T}_3$ and improved the lower bound of $n$ from $3000$ to $33$.
A short proof for the edge-stability with a linear dependency between the error parameters can be found in~\cite{LIU19}. 

The vertex-extendability of $\mathds{T}_3$ can be easily obtained from the proof of Lemma~4.4 in~\cite{LMR23unif} (also see the Concluding Remarks in~\cite{LMR23unif}).  
Therefore, we obtain the following result. 

\begin{theorem}\label{THEOREM:T_3}
There exist constants $N_0$ and $c_{\mathds{T}_3}$ such that for all integers $n \ge N_0$ and $t \in [0, c_{\mathds{T}_3}n]$ we have 
\begin{align*}
    \mathrm{EX}(n, (t+1)\mathds{T}_3) = \left\{K_{t}^{3}\uproduct T_3(n-t,3)\right\}. 
\end{align*}
\end{theorem}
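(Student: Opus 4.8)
The plan is to deduce Theorem~\ref{THEOREM:T_3} directly from the general machinery already assembled in the excerpt. The key observation is that all the real work has been outsourced: Theorem~\ref{THM:Main-simple} takes as input an $r$-graph $F$ together with a boundedness hypothesis (a) and a smoothness hypothesis (b), and outputs exactly the structural identity $\mathrm{EX}(n,(t+1)F) = K_t^r \uproduct \mathrm{EX}(n-t,F)$ for $n$ large and $t \le c_F n$. So the task reduces to verifying that $F = \mathds{T}_3$ satisfies the hypotheses, and then identifying $\mathrm{EX}(n-t,\mathds{T}_3)$.

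First I would recall the three facts about $\mathds{T}_3$ cited just above the statement: (i) by Frankl--F\"uredi~\cite{FF83}, $\mathrm{EX}(n,\mathds{T}_3) = \{T_3(n,3)\}$ for all $n \ge 3000$, so in particular $(\mathds{T}_3, K_3)$ is a Tur\'an pair with $K_3$ the (evidently minimal) pattern whose blowups are the complete $3$-partite $3$-graphs, and $\pi(\mathds{T}_3) = \lambda(K_3) = 2/9$; (ii) by Keevash--Mubayi~\cite{KM04} (see also~\cite{LIU19}), $\mathds{T}_3$ is edge-stable with respect to $K_3$; and (iii) by the proof of Lemma~4.4 in~\cite{LMR23unif}, $\mathds{T}_3$ is vertex-extendable with respect to $K_3$. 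With (ii) and (iii) in hand, Theorem~\ref{THM:vertex-extend-bounded} yields a constant $c>0$ such that $\mathds{T}_3$ is $\bigl(c\binom{n-1}{2}, \frac{1-\pi(\mathds{T}_3)}{8m}\binom{n-1}{2}\bigr)$-bounded for large $n$ (here $m = v(\mathds{T}_3) = 5$), which gives hypothesis (a) of Theorem~\ref{THM:Main-simple} after absorbing the $\binom{n-1}{2}$ versus $\binom{n}{2}$ discrepancy into the constant; and Theorem~\ref{THM:Turan-pair-smooth} gives that $\mathrm{ex}(n,\mathds{T}_3)$ is $4\binom{n-1}{1}$-smooth, which is certainly $\frac{1-\pi(\mathds{T}_3)}{8m}\binom{n}{2}$-smooth for large $n$ since the former is linear and the latter quadratic. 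Equivalently one can just cite Corollary~\ref{CORO:degree-stable-smooth-bounded}, noting that vertex-extendability plus edge-stability gives degree-stability in this case (or invoking it directly if~\cite{KM04} already establishes degree-stability).

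Then I would apply Theorem~\ref{THM:Main-simple} with $r = 3$, $m = 5$, $F = \mathds{T}_3$: it produces $N_0$ and, setting $c_{\mathds{T}_3} := \min\{\frac{c}{60e}, \frac{1-2/9}{64\cdot 75}\}$, gives for all $n \ge N_0$ and $t \le c_{\mathds{T}_3} n$ the identity $\mathrm{EX}(n,(t+1)\mathds{T}_3) = K_t^3 \uproduct \mathrm{EX}(n-t,\mathds{T}_3)$. Finally, by Frankl--F\"uredi, $\mathrm{EX}(n-t,\mathds{T}_3) = \{T_3(n-t,3)\}$ as long as $n - t \ge 3000$, which holds after enlarging $N_0$ if necessary (since $t \le c_{\mathds{T}_3} n < n$). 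Hence $\mathrm{EX}(n,(t+1)\mathds{T}_3) = \{K_t^3 \uproduct T_3(n-t,3)\}$, as claimed.

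I do not expect a genuine obstacle here, since this theorem is explicitly advertised as an immediate corollary of the general framework; the only points requiring a moment's care are bookkeeping ones: matching the normalizations in Definitions~\ref{DEFINITION:Bad-Degree} and~\ref{DEFINITION:smoothness} ($\binom{n}{r-1}$ in Theorem~\ref{THM:Main-simple} versus $\binom{n-1}{r-1}$ in Theorem~\ref{THM:vertex-extend-bounded}, absorbed into $c$), confirming that the linear-in-$n$ smoothness bound of Theorem~\ref{THM:Turan-pair-smooth} comfortably fits inside the required $\Theta(n^2)$ window for large $n$, and ensuring $N_0$ is taken large enough that $n - t \ge 3000$ so that the exact Tur\'an result of~\cite{FF83} identifies $\mathrm{EX}(n-t,\mathds{T}_3)$ uniquely. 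If one wanted to be scrupulous about the hypothesis that $P = K_3$ is minimal, one checks that deleting any vertex from $K_3$ leaves $K_2$ with $\lambda(K_2) = 1/2 < 2/9$ — wait, that is false; rather $\lambda$ of the $3$-uniform pattern $K_2$ is $0 < 2/9$, so minimality holds trivially.
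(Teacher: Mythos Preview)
Your proposal is correct and follows exactly the approach the paper takes: cite edge-stability (Keevash--Mubayi) and vertex-extendability (via~\cite{LMR23unif}) of $\mathds{T}_3$ with respect to the minimal pattern $K_3^3$, feed these into Theorems~\ref{THM:Turan-pair-smooth} and~\ref{THM:vertex-extend-bounded} to get smoothness and boundedness, then apply Theorem~\ref{THM:Main-simple} and finish with $\mathrm{EX}(n-t,\mathds{T}_3)=\{T_3(n-t,3)\}$ from~\cite{FF83}. One harmless slip: the parenthetical claim that ``vertex-extendability plus edge-stability gives degree-stability'' is not asserted anywhere in the paper (only the reverse implication is), so you should drop that aside and simply rely on Theorem~\ref{THM:vertex-extend-bounded} directly, which already accepts edge-stability plus (weak) vertex-extendability as input.
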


For $r = 4$, improving a result of Sidorenko in~\cite{Sido87}, 
Pikhurko proved in~\cite{PI08} that 
$\mathrm{EX}(n,\mathds{T}_4) = \left\{T_4(n,4)\right\}$ for all sufficiently large $n$.

Similarly, the vertex-extendability of $\mathds{T}_4$ can be obtained from the proof of Lemma~4.4 in~\cite{LMR23unif} (also see the Concluding Remarks in~\cite{LMR23unif}).  
Therefore, we obtain the following result. 

\begin{theorem}\label{THEOREM:T_4}
There exist constants $N_0$ and $c_{\mathds{T}_4}$ such that for all integers $n \ge N_0$ and $t \in [0, c_{\mathds{T}_4}n]$ we have 
\begin{align*}
    \mathrm{EX}(n, (t+1)\mathds{T}_4) = \left\{K_{t}^{4}\uproduct T_4(n-t,4)\right\}. 
\end{align*}
\end{theorem}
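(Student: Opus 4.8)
The statement to prove is Theorem~\ref{THEOREM:T_4}, which asserts that for the $4$-uniform generalized triangle $\mathds{T}_4$, there exist constants $N_0$ and $c_{\mathds{T}_4} > 0$ such that $\mathrm{EX}(n, (t+1)\mathds{T}_4) = \{K_t^4 \uproduct T_4(n-t, 4)\}$ for all $n \ge N_0$ and $t \in [0, c_{\mathds{T}_4} n]$.

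\medskip

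The plan is to deduce this directly from Theorem~\ref{THM:Main-simple} (for the structure of $\mathrm{EX}(n,(t+1)\mathds{T}_4)$) together with Corollary~\ref{CORO:degree-stable-smooth-bounded} (to verify the boundedness and smoothness hypotheses). The key observation is that the Tur\'an problem for $\mathds{T}_4$ is essentially already understood: Pikhurko proved in~\cite{PI08} that $\mathrm{EX}(n, \mathds{T}_4) = \{T_4(n,4)\}$ for large $n$, so $(\mathds{T}_4, K_4)$ is a Tur\'an pair (with $P = K_4$ the relevant minimal $4$-uniform pattern, whose blowups are exactly the subgraphs of generalized Tur\'an $4$-graphs, and whose Lagrangian equals $\pi(\mathds{T}_4)$). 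Since $K_4$ is a minimal pattern, to invoke Corollary~\ref{CORO:degree-stable-smooth-bounded} it suffices to check that $\mathds{T}_4$ is degree-stable with respect to $K_4$.

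\medskip

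So the main step is to establish degree-stability of $\mathds{T}_4$: there is $\zeta > 0$ such that for large $n$, every $n$-vertex $\mathds{T}_4$-free $4$-graph $\mathcal{H}$ with $\delta(\mathcal{H}) \ge (\pi(\mathds{T}_4) - \zeta)\binom{n-1}{3}$ is a $K_4$-subconstruction (i.e.\ a subgraph of some $T_4(n,4)$). Rather than proving this from scratch, I would extract it from the existing literature exactly as the paper suggests: edge-stability of $\mathds{T}_4$ (the analogue for $r=4$ of the Keevash--Mubayi~\cite{KM04} result, which underlies Pikhurko's exact result in~\cite{PI08}) combined with vertex-extendability of $\mathds{T}_4$ with respect to $K_4$. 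The paper explicitly notes that vertex-extendability of $\mathds{T}_4$ can be read off from the proof of Lemma~4.4 in~\cite{LMR23unif} (and the concluding remarks there); the argument there is a local one, showing that if $\mathcal{H} - v$ embeds into a generalized Tur\'an $4$-graph and $\delta(\mathcal{H})$ is near-extremal, then the link of $v$ forces $v$ to join one existing part or form a new part, so $\mathcal{H}$ itself is a $K_4$-subconstruction. Degree-stability then follows since degree-stability is implied by the conjunction of edge-stability and vertex-extendability, as observed in the excerpt just before Corollary~\ref{CORO:degree-stable-smooth-bounded}.

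\medskip

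Once degree-stability of $\mathds{T}_4$ with respect to the minimal pattern $K_4$ is in hand, Corollary~\ref{CORO:degree-stable-smooth-bounded} provides a constant $c > 0$ such that $\mathrm{ex}(n, \mathds{T}_4)$ is $4\binom{n-1}{2}$-smooth and $\mathds{T}_4$ is $\bigl(c\binom{n-1}{3}, \frac{1-\pi(\mathds{T}_4)}{8m}\binom{n-1}{3}\bigr)$-bounded, where $m = v(\mathds{T}_4) = 7$. These are (up to the harmless shift from $\binom{n-1}{\cdot}$ to $\binom{n}{\cdot}$ and adjusting constants, and since $4\binom{n-1}{2} \le \frac{1-\pi(\mathds{T}_4)}{8m}\binom{n}{3}$ for large $n$) precisely hypotheses (a) and (b) of Theorem~\ref{THM:Main-simple}. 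Applying that theorem with $F = \mathds{T}_4$ yields $\mathrm{EX}(n,(t+1)\mathds{T}_4) = K_t^4 \uproduct \mathrm{EX}(n-t, \mathds{T}_4)$ for all $n \ge N_0$ and $t \le c_{\mathds{T}_4} n$, where $c_{\mathds{T}_4} := \min\{\frac{c}{4 \cdot 4 \cdot 7 e} n, \frac{1-\pi(\mathds{T}_4)}{64 \cdot 4 \cdot 49} n\}/n$ (a fixed positive constant). Finally, substituting $\mathrm{EX}(n-t, \mathds{T}_4) = \{T_4(n-t, 4)\}$ from Pikhurko~\cite{PI08} (valid since $n - t \ge n - c_{\mathds{T}_4} n$ is still large) gives $\mathrm{EX}(n,(t+1)\mathds{T}_4) = \{K_t^4 \uproduct T_4(n-t,4)\}$, as claimed.

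\medskip

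The main obstacle is the degree-stability of $\mathds{T}_4$; everything else is bookkeeping with the general machinery. I expect that this is genuinely already available — edge-stability from the analysis behind~\cite{PI08} (or a direct argument in the spirit of~\cite{KM04,LIU19}) and vertex-extendability from~\cite{LMR23unif} — so the "proof" is really a citation-assembly argument. If one wanted a self-contained treatment, the nontrivial part would be the vertex-extendability step: analyzing the link $3$-graph of a vertex $v$ in a near-extremal $\mathds{T}_4$-free $4$-graph and showing the constraints from $\mathds{T}_4$-freeness force $v$ to be consistently placeable into the $4$-partition structure; this is a local stability computation and is exactly what Lemma~4.4 of~\cite{LMR23unif} carries out in a uniform framework.
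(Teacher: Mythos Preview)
Your approach is essentially the paper's, but there is one genuine slip in the justification. You write that degree-stability ``is implied by the conjunction of edge-stability and vertex-extendability, as observed in the excerpt just before Corollary~\ref{CORO:degree-stable-smooth-bounded}.'' That excerpt says the \emph{opposite}: degree-stability implies edge-stability and vertex-extendability, not conversely. The converse implication is in fact the main theorem of~\cite{LMR23unif} (the ``unified framework''), so your claim is substantively true, but it is not something stated in this paper and you have cited the wrong direction.

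More to the point, the detour through degree-stability is unnecessary. The paper's route is simply: $(\mathds{T}_4, K_4^4)$ is a Tur\'an pair with $K_4^4$ minimal (Pikhurko~\cite{PI08}), $\mathds{T}_4$ is edge-stable (from~\cite{PI08}) and vertex-extendable (from the proof of Lemma~4.4 in~\cite{LMR23unif}), so Theorems~\ref{THM:Turan-pair-smooth} and~\ref{THM:vertex-extend-bounded} apply directly to give smoothness and boundedness, and then Theorem~\ref{THM:Main-simple} together with $\mathrm{EX}(n-t,\mathds{T}_4)=\{T_4(n-t,4)\}$ finishes. This avoids invoking degree-stability altogether and matches exactly the hypotheses of Theorem~\ref{THM:vertex-extend-bounded}, which only asks for edge-stability plus (weak) vertex-extendability. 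With that correction, your argument and the paper's coincide.
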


The situation becomes complicated when $r\ge 5$. 
Let $\mathds{W}_5$ denote the unique $5$-graph with $11$ vertices such that every $4$-set of vertices is contained in exactly one edge. 
Let $\mathds{W}_6$ denote the unique $6$-graph with $12$ vertices such that every $5$-set of vertices is contained in exactly one edge. 
Let $\mathds{W}_5(n)$ and $\mathds{W}_6(n)$ denote the maximum $\mathds{W}_5$-construction and $\mathds{W}_6$-construction on $n$ vertices, respectively. 
Some calculations show that $\mathds{W}_5(n) \sim \frac{6}{11^4}n^5$ and $\mathds{W}_{6}(n) \sim \frac{11}{12^5}n^6$. 

In~\cite{FF89}, Frankl and F\"{u}redi proved that
$\mathrm{ex}(n,\mathds{T}_r) \le |\mathds{W}_r(n)|+o(n^r)$ for $r =5,6$. 
Much later, using a sophisticated stability argument, Norin and Yepremyan~\cite{NY17} proved that $\mathds{T}_5$ and $\mathds{T}_6$ are edge-stable with respect to $\mathds{W}_5$ and $\mathds{W}_6$ respectively, and moreover, 
$\mathrm{EX}(n,\mathds{T}_r) = \{\mathds{W}_r(n)\}$ for $r =5,6$ and large $n$.

It was observed by Pikhurko~\cite{PI08} that both $\mathds{T}_5$ and $\mathds{T}_6$ fail to be degree-stable (or vertex-extendable). 
However, from Lemmas~7.2 and~7.4 in~\cite{NY17} one can easily observe that $\mathds{T}_5$ and $\mathds{T}_6$ are weakly vertex-extendable. 
Therefore, we obtain the following theorem. 

\begin{theorem}\label{THEOREM:T_5-T-6}
For $r\in \{5,6\}$ there exist constants $N_0$ and $c_{\mathds{T}_r}>0$ such that for all integers $n \ge N_0$ and $t \in [0, c_{\mathds{T}_r} n]$ we have 
\begin{align*}
    \mathrm{EX}(n, (t+1)\mathds{T}_r) = \left\{K_{t}^{r}\uproduct \mathds{W}_r(n-t)\right\}. 
\end{align*}
\end{theorem}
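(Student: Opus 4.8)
The plan is to derive Theorem~\ref{THEOREM:T_5-T-6} from Theorem~\ref{THM:Main-simple}, using Theorems~\ref{THM:Turan-pair-smooth} and~\ref{THM:vertex-extend-bounded} to supply its smoothness and boundedness hypotheses. The one point of care is that, unlike $\mathbb{F}$, $\mathds{T}_3$, or $\mathds{T}_4$, the generalized triangles $\mathds{T}_5$ and $\mathds{T}_6$ are \emph{not} degree-stable (hence not vertex-extendable), so Corollary~\ref{CORO:degree-stable-smooth-bounded} is unavailable and the \emph{weak} vertex-extendability hypothesis of Theorem~\ref{THM:vertex-extend-bounded} must be checked directly.

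First I would record the structural input. View $\mathds{W}_r$ as an $r$-uniform pattern (its edge set is a set of simple $r$-sets on $[n_r]$, with $n_5=11$ and $n_6=12$), so that its blowups are exactly the $\mathds{W}_r$-constructions and $\Lambda(\mathds{W}_r,n)=|\mathds{W}_r(n)|$. By Norin and Yepremyan~\cite{NY17}, for $r\in\{5,6\}$ and large $n$ one has $\mathrm{ex}(n,\mathds{T}_r)=|\mathds{W}_r(n)|$ and $\mathrm{EX}(n,\mathds{T}_r)=\{\mathds{W}_r(n)\}$; moreover every $\mathds{W}_r$-construction is $\mathds{T}_r$-free (the two edges of $\mathds{T}_r$ that meet in $r-1$ vertices force their two private vertices into a common part of the blowup, after which the third edge of $\mathds{T}_r$ would have a repeated part, impossible in the blowup of a simple $r$-graph). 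Hence $(\mathds{T}_r,\mathds{W}_r)$ is a Tur\'an pair. I would then check that $\mathds{W}_r$ is a minimal pattern: since $\mathds{W}_r$ is vertex-transitive, deleting any $i\in[n_r]$ removes a positive proportion of the edges and, after re-optimizing the weights, strictly decreases the Lagrangian, so $\lambda(\mathds{W}_r-i)<\lambda(\mathds{W}_r)$ for all $i$; in particular $\pi(\mathds{T}_r)=\lambda(\mathds{W}_r)\in(0,1)$, and I set $m:=v(\mathds{T}_r)=2r-1$ (so $m=9$ when $r=5$, $m=11$ when $r=6$).

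The substantive work is verifying the two hypotheses of Theorem~\ref{THM:Main-simple}. Smoothness is immediate from Theorem~\ref{THM:Turan-pair-smooth}: as $(\mathds{T}_r,\mathds{W}_r)$ is a Tur\'an pair with $\mathds{W}_r$ minimal, $\mathrm{ex}(n,\mathds{T}_r)$ is $4\binom{n-1}{r-2}$-smooth, which for large $n$ is at most $\frac{1-\pi(\mathds{T}_r)}{8m}\binom{n-1}{r-1}\le\frac{1-\pi(\mathds{T}_r)}{8m}\binom{n}{r-1}$ since $\binom{n-1}{r-2}=O(n^{r-2})$ and $\binom{n}{r-1}=\Theta(n^{r-1})$; so hypothesis~(b) holds. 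For hypothesis~(a), Theorem~\ref{THM:vertex-extend-bounded} reduces the task to showing $\mathds{T}_r$ is edge-stable and weakly vertex-extendable with respect to $\mathds{W}_r$. Edge-stability is exactly what~\cite{NY17} proves. The only genuinely new ingredient is weak vertex-extendability, which I would extract from Lemmas~7.2 and~7.4 of~\cite{NY17}: given $\delta>0$, those lemmas furnish $\zeta>0$ and $N_0$ such that any $\mathds{T}_r$-free $\mathcal{H}$ on $n\ge N_0$ vertices with $\delta(\mathcal{H})\ge(\pi(\mathds{T}_r)-\zeta)\binom{n-1}{r-1}$ and with $\mathcal{H}-v$ a $\mathds{W}_r$-subconstruction has the link of $v$ forced, up to $o(n^{r-1})$ edges, to be a sub-link of the model $\mathds{W}_r$-construction, whence $d_{\mathcal{H}}(v)\le(\pi(\mathds{T}_r)+\delta)\binom{n-1}{r-1}$. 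I expect this translation — matching the error parameters of~\cite{NY17} to the precise quantifiers in the definition of weak vertex-extendability, and checking that a $\mathds{W}_r$-subconstruction of $\mathcal{H}-v$ genuinely identifies the model link that $v$ must respect — to be the main obstacle; it is bookkeeping, not new ideas.

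Finally I would fix the constants. Theorem~\ref{THM:vertex-extend-bounded} yields $c>0$ with $\mathds{T}_r$ being $\bigl(c\binom{n-1}{r-1},\frac{1-\pi(\mathds{T}_r)}{8m}\binom{n-1}{r-1}\bigr)$-bounded for large $n$; since $\binom{n-1}{r-1}\ge\frac{1}{2}\binom{n}{r-1}$ and $\frac{1-\pi(\mathds{T}_r)}{8m}<\frac{1-\pi(\mathds{T}_r)}{4m}$ for large $n$, boundedness upgrades to the form $\bigl(\frac{c}{2}\binom{n}{r-1},\frac{1-\pi(\mathds{T}_r)}{4m}\binom{n}{r-1}\bigr)$ demanded by hypothesis~(a). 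Feeding this (with constant $c/2$ and $m=2r-1$) into Theorem~\ref{THM:Main-simple} gives, for all $n\ge N_0$ and all $t\le\min\bigl\{\tfrac{c}{8erm}n,\tfrac{1-\pi(\mathds{T}_r)}{64rm^2}n\bigr\}$,
\[
  \mathrm{EX}\bigl(n,(t+1)\mathds{T}_r\bigr)=K_t^r\uproduct\mathrm{EX}(n-t,\mathds{T}_r)=K_t^r\uproduct\{\mathds{W}_r(n-t)\}=\bigl\{K_t^r\uproduct\mathds{W}_r(n-t)\bigr\},
\]
using $\mathrm{EX}(n-t,\mathds{T}_r)=\{\mathds{W}_r(n-t)\}$ from~\cite{NY17} (applicable since $n-t$ is large) in the last two steps. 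Setting $c_{\mathds{T}_r}:=\min\bigl\{\tfrac{c}{8erm},\tfrac{1-\pi(\mathds{T}_r)}{64rm^2}\bigr\}$ and enlarging $N_0$ if needed finishes the proof.
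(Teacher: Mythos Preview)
Your proposal is correct and follows essentially the same route as the paper: the paper too deduces Theorem~\ref{THEOREM:T_5-T-6} by noting that $(\mathds{T}_r,\mathds{W}_r)$ is a Tur\'an pair (via~\cite{NY17}), that $\mathds{T}_r$ is edge-stable~\cite{NY17} and weakly vertex-extendable (extracted from Lemmas~7.2 and~7.4 of~\cite{NY17}), and then invokes Theorems~\ref{THM:Turan-pair-smooth} and~\ref{THM:vertex-extend-bounded} to feed Theorem~\ref{THM:Main-simple}. Your write-up is in fact more explicit than the paper's one-line deduction, in particular your checks that $\mathds{W}_r$ is minimal and that the bounds $4\binom{n-1}{r-2}$ and $\frac{1-\pi(\mathds{T}_r)}{8m}\binom{n-1}{r-1}$ fit the hypotheses of Theorem~\ref{THM:Main-simple}.
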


It seems that there are even no conjectures for the extremal constructions of $\mathds{T}_r$ when $r\ge 7$.
We refer the reader to~\cite{FF89} for some lower and upper bounds for $\pi(\mathds{T}_r)$ in general. 
\subsection{The expansion of complete graphs}\label{SUBSEC:graph-expansion}
Fix integers $\ell \ge r \ge 2$. 
The \textbf{expansion} $H_{\ell+1}^{r}$ of the complete graph $K_{\ell+1}$ is the $r$-graph obtained from $K_{\ell+1}$ by adding a set of $r-2$ new vertices into each edge of $K_{\ell+1}$, and moreover, these new $(r-2)$-sets are pairwise disjoint. 
It is clear from the definition that $H_{\ell+1}^{r}$ has $\ell+1+(r-2)\binom{\ell+1}{2}$ vertices and $\binom{\ell+1}{2}$ edges.

\begin{figure}[htbp]
\centering

\tikzset{every picture/.style={line width=1pt}} 

\begin{tikzpicture}[x=0.75pt,y=0.75pt,yscale=-1,xscale=1,line join=round]

\draw   (429,90.5) .. controls (429,69.79) and (445.79,53) .. (466.5,53) .. controls (487.21,53) and (504,69.79) .. (504,90.5) .. controls (504,111.21) and (487.21,128) .. (466.5,128) .. controls (445.79,128) and (429,111.21) .. (429,90.5) -- cycle ;
\draw   (492,180.5) .. controls (492,159.79) and (508.79,143) .. (529.5,143) .. controls (550.21,143) and (567,159.79) .. (567,180.5) .. controls (567,201.21) and (550.21,218) .. (529.5,218) .. controls (508.79,218) and (492,201.21) .. (492,180.5) -- cycle ;
\draw   (368,180.5) .. controls (368,159.79) and (384.79,143) .. (405.5,143) .. controls (426.21,143) and (443,159.79) .. (443,180.5) .. controls (443,201.21) and (426.21,218) .. (405.5,218) .. controls (384.79,218) and (368,201.21) .. (368,180.5) -- cycle ;
\draw  [fill=uuuuuu, fill opacity=0.5]  (466.5,90.5) .. controls (477,134) and (495,163) .. (529.5,180.5) .. controls (493,160) and (453,158) .. (405.5,180.5) .. controls (435,160) and (468,123) .. (466.5,90.5) -- cycle;
\draw   (196,61) -- (291,215) -- (101,215) -- cycle ;
\draw    (196,61) -- (197,159) ;
\draw    (197,159) -- (101,215) ;
\draw    (197,159) -- (291,215) ;
\draw [fill=uuuuuu] (196,61) circle (2pt);
\draw [fill=uuuuuu] (291,215) circle (2pt);
\draw [fill=uuuuuu] (101,215) circle (2pt);
\draw [fill=uuuuuu] (197,159) circle (2pt);
\draw [fill={rgb:red,0;green,0;blue,1}] (234,181) circle (3pt);
\draw [fill={rgb:red,0;green,0;blue,1}] (161,181) circle (3pt);
\draw [fill={rgb:red,0;green,0;blue,1}] (197,120) circle (3pt);
\draw [fill={rgb:red,0;green,0;blue,1}] (199,215) circle (3pt);
\draw [fill={rgb:red,0;green,0;blue,1}] (243,136) circle (3pt);
\draw [fill={rgb:red,0;green,0;blue,1}] (150,136) circle (3pt);
\draw [fill=uuuuuu] (466.5,90.5) circle (1.2pt);
\draw [fill=uuuuuu] (529.5,180.5) circle (1.2pt);
\draw [fill=uuuuuu] (405.5,180.5) circle (1.2pt);
\draw (455.99,31.21) node [anchor=north west][inner sep=0.75pt]   [align=left] {$V_1$};
\draw (526.99,224.21) node [anchor=north west][inner sep=0.75pt]   [align=left] {$V_2$};
\draw (391.99,223.21) node [anchor=north west][inner sep=0.75pt]   [align=left] {$V_3$};
\draw (191,42) node [anchor=north west][inner sep=0.75pt]   [align=left] {$1$};
\draw (293,218) node [anchor=north west][inner sep=0.75pt]   [align=left] {$2$};
\draw (87,218) node [anchor=north west][inner sep=0.75pt]   [align=left] {$3$};
\draw (191,170) node [anchor=north west][inner sep=0.75pt]   [align=left] {$4$};

\end{tikzpicture}

\caption{The expansion $H_{4}^3$ of $K_4$ and the Tur\'{a}n $3$-graph $T_{3}(n,3)$.}
\label{fig:H43}
\end{figure}
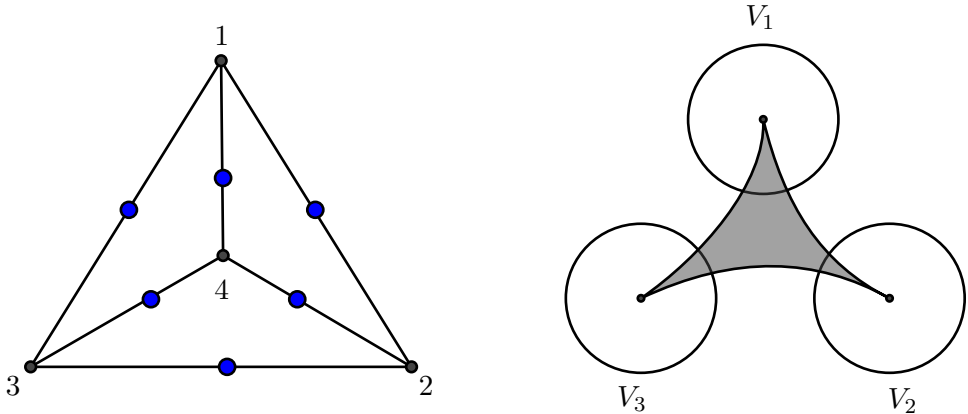

The $r$-graph $H_{\ell+1}^{r}$ was introduced by  Mubayi~\cite{MU06} as a way to generalize Tur\'{a}n’s theorem to hypergraphs. 
These hypergraphs provide the first explicitly defined examples 
which yield an infinite family of numbers realizable as Tur\'{a}n densities for hypergraphs.
In~\cite{MU06}, Mubayi determined the Tur\'{a}n density of $H_{\ell+1}^{r}$ for all integers $\ell \ge r \ge 3$, and proved that $H_{\ell+1}^{r}$ is edge-stable.  
In~\cite{PI13}, Pikhurko refined Mubayi's result and proved that $\mathrm{EX}(n, H_{\ell+1}^{r}) = \{T_r(n,\ell)\}$ for all integers $\ell \ge r \ge 3$ when $n$ is sufficiently large.

The vertex-extendability of $H_{\ell+1}^{r}$ can be easily obtained by a small modification of the proof of Lemma~4.8 in~\cite{LMR23unif} (also see the Concluding Remarks in~\cite{LMR23unif}). 
Therefore, we obtain the following result. 

\begin{theorem}\label{THEOREM:H_l^r}
Fix integers $\ell \ge r \ge 2$. 
There exist constants $N_0$ and $c = c(\ell, r) >0$ such that for all integers $n \ge N_0$ and $t\in [0, c n]$ we have 
\begin{align*}
    \mathrm{EX}(n, (t+1)H_{\ell+1}^{r}) = \left\{K_{t}^{r} \uproduct T_{r}(n-t, \ell)\right\}. 
\end{align*}
\end{theorem}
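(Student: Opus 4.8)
\textbf{Proof plan for Theorem~\ref{THEOREM:H_l^r}.}
The plan is to verify the three hypotheses of Theorem~\ref{THM:Main-simple} for $F = H_{\ell+1}^{r}$ and then invoke that theorem directly. By the result of Pikhurko~\cite{PI13}, $(H_{\ell+1}^{r}, K_{\ell})$ is a Tur\'{a}n pair, and $K_{\ell}$ is a minimal pattern (since deleting any vertex class from $T_r(n,\ell)$ strictly decreases the number of edges, so $\lambda(K_\ell - i) < \lambda(K_\ell)$ for each $i$); thus $\pi(H_{\ell+1}^{r}) = \lambda(K_\ell) = t_r(n,\ell)/\binom{n}{r} \to 1 - (\ell)_r/\ell^r$ or the appropriate constant, in any case strictly less than $1$, so $H_{\ell+1}^{r}$ is nondegenerate. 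First I would record that Pikhurko~\cite{PI13}, building on Mubayi~\cite{MU06}, gives edge-stability of $H_{\ell+1}^{r}$ with respect to $K_\ell$. Next I would establish vertex-extendability: as indicated in the excerpt, this follows from a small modification of the proof of Lemma~4.8 in~\cite{LMR23unif}, whose argument shows that if $\mathcal{H}$ is $H_{\ell+1}^{r}$-free with high minimum degree and $\mathcal{H}-v$ is a $K_\ell$-subconstruction, then $v$ can be placed into one of the existing $\ell$ parts without creating a copy of $H_{\ell+1}^{r}$, so $\mathcal{H}$ itself is a $K_\ell$-subconstruction.

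With edge-stability and vertex-extendability in hand, Corollary~\ref{CORO:degree-stable-smooth-bounded} does not quite apply (it requires degree-stability), so instead I would apply Theorem~\ref{THM:Turan-pair-smooth} and Theorem~\ref{THM:vertex-extend-bounded} separately. Theorem~\ref{THM:Turan-pair-smooth} applies verbatim since $(H_{\ell+1}^{r}, K_\ell)$ is a Tur\'{a}n pair with $K_\ell$ minimal, giving that $\mathrm{ex}(n, H_{\ell+1}^{r})$ is $4\binom{n-1}{r-2}$-smooth; for large $n$ this is certainly at most $\frac{1-\pi(F)}{8m}\binom{n}{r-1}$ since $\binom{n-1}{r-2} = o\!\left(\binom{n}{r-1}\right)$ and $\frac{1-\pi(F)}{8m}$ is a positive constant, so hypothesis (b) of Theorem~\ref{THM:Main-simple} holds. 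Theorem~\ref{THM:vertex-extend-bounded} (using vertex-extendability, which implies weak vertex-extendability via Lemma~\ref{LEMMA:LP-pattern-min-max} and~(\ref{equ:Turan-density-Lagrangian})) yields a constant $c = c(\ell,r) > 0$ such that $H_{\ell+1}^{r}$ is $\left(c\binom{n-1}{r-1}, \frac{1-\pi(F)}{8m}\binom{n-1}{r-1}\right)$-bounded for large $n$; since $\binom{n-1}{r-1} \le \binom{n}{r-1}$ and $\frac{1-\pi(F)}{8m} < \frac{1-\pi(F)}{4m}$, this gives the $\left(c\binom{n}{r-1}, \frac{1-\pi(F)}{4m}\binom{n}{r-1}\right)$-boundedness required in hypothesis (a) (possibly shrinking $c$ slightly to absorb the difference between $\binom{n-1}{r-1}$ and $\binom{n}{r-1}$).

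Finally, having verified (a) and (b), Theorem~\ref{THM:Main-simple} gives $N_0$ and the threshold $t \le \min\!\left\{\frac{c}{4erm}n, \frac{1-\pi(F)}{64rm^2}n\right\}$ below which $\mathrm{EX}(n,(t+1)H_{\ell+1}^{r}) = K_t^r \uproduct \mathrm{EX}(n-t, H_{\ell+1}^{r})$. Since $\mathrm{EX}(n-t, H_{\ell+1}^{r}) = \{T_r(n-t,\ell)\}$ for $n-t$ large (which holds because $t = O(n)$ with a small enough constant keeps $n-t$ large), we conclude $\mathrm{EX}(n,(t+1)H_{\ell+1}^{r}) = \{K_t^r \uproduct T_r(n-t,\ell)\}$, and setting $c_F := \min\!\left\{\frac{c}{4erm}, \frac{1-\pi(F)}{64rm^2}\right\}$ with $m = \ell+1+(r-2)\binom{\ell+1}{2}$ finishes the proof. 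The only genuine content specific to this hypergraph is the vertex-extendability step; everything else is a black-box application of the general machinery. I expect the main obstacle to be carrying out the modification of~\cite[Lemma~4.8]{LMR23unif} cleanly, in particular checking that the "no forbidden configuration" condition that allows a vertex to be absorbed into a part is robust to the $(r-2)$-set padding that distinguishes $H_{\ell+1}^{r}$ from $K_{\ell+1}$; but since the expansions are precisely engineered to make this work (the new $(r-2)$-sets are pairwise disjoint, so an expanded copy forces a $K_{\ell+1}$ in the "core"), this should go through as in~\cite{LMR23unif}.
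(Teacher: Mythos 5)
Your proposal follows essentially the same route as the paper: edge-stability from Mubayi/Pikhurko, vertex-extendability of $H_{\ell+1}^{r}$ via a modification of Lemma~4.8 of~\cite{LMR23unif}, smoothness from Theorem~\ref{THM:Turan-pair-smooth} and boundedness from Theorem~\ref{THM:vertex-extend-bounded} (noting vertex-extendability implies the weak version), and then a black-box application of Theorem~\ref{THM:Main-simple} together with $\mathrm{EX}(n-t,H_{\ell+1}^r)=\{T_r(n-t,\ell)\}$, which is exactly how the paper derives Theorem~\ref{THEOREM:H_l^r}. The only blemish is your formula for the limiting density (it is $\pi(H_{\ell+1}^r)=\ell(\ell-1)\cdots(\ell-r+1)/\ell^{r}$, not one minus that quantity), but since you only use $0<\pi(F)<1$, this plays no role in the argument.
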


\textbf{Remarks.}
The definition of expansion can be extended to all graphs as follows. 
Fix a graph $F$,  
let the $r$-graph $H_{F}^{r}$ be obtained from $F$ by adding a set of $r-2$ new vertices into each edge of $F$, and moreover, these new $(r-2)$-sets are pairwise disjoint. 
Similar to Theorem~\ref{THM:edge-critical}, one could obtain a corresponding result for the expansion of all edge-critical graphs. We omit its statement and proof here. 
\subsection{The expansion of hypergraphs}\label{SUBSEC:hypergraph-expansion}
Given an $r$-graph $F$ with $\ell+1$ vertices, 
the \textbf{expansion} $H^{F}_{\ell+1}$ of $F$ is the $r$-graph obtained from $F$ by adding, for every pair $\{u,v\}\subset V(F)$ that is not contained in any edge of $F$, an $(r-2)$-set of new vertices, and moreover, these $(r-2)$-sets are pairwise disjoint. 
It is easy to see that the expansion of the empty $r$-graph on $\ell+1$ vertices (here empty means that the edge set is empty) is the same as the expansion of the complete graph $K_{\ell+1}$ defined in the previous subsection. 
However, in general, these two definitions are different. 

Our first result in this subsection is about the expansion of the expanded trees. 
Given a tree $T$ on $k$ vertices, define the \textbf{$(r-2)$-expansion} $\mathrm{Exp}(T)$ of $T$ as 
\begin{align*}
    \mathrm{Exp}(T) := \left\{e\cup A \colon e \in T\right\}, 
\end{align*}
where $A$ is a set of $r-2$ new vertices that is disjoint from $V(T)$. 

Given a tree $T$ on $k$ vertices, we say $T$ is an \textbf{Erd\H{o}s--S\'{o}s tree} if it satisfies the famous Erd\H{o}s--S\'{o}s conjecture on trees. 
In other words, $T$ is contained in every graph with average degree more than $k-2$. 
In~\cite{Sido89}, Sidorenko proved that for large $k$, if $T$ is an Erd\H{o}s--S\'{o}s tree on $k$ vertices, then   
$\mathrm{ex}(n,H^{\mathrm{Exp}(T)}_{k+r-2}) \le t_{r}(n, k+r-3) + o(n^{r})$. 
Much later, Norin and Yepremyan~\cite{NY18}, and independently, Brandt, Irwin, and Jiang~\cite{BIJ17}, improved Sidorenko's result by showing that, under the same setting, $H^{\mathrm{Exp}(T)}_{k+r-2}$ is edge-stable with respect to $K_{k+r-3}^{r}$ and $\mathrm{EX}(n,H^{\mathrm{Exp}(T)}_{k+r-2}) = \{T_r(n,k+r-3)\}$ for large $n$. 
In fact, it follows easily from Lemmas~3.5 and~4.1 in~\cite{NY18} that $H^{\mathrm{Exp}(T)}_{k+r-2}$ is weakly vertex-extendable with respect to $K_{k+r-3}^{r}$. 
Hence, we obtain the following result. 

\begin{theorem}\label{THM:ES-tree}
    For every integer $r\ge 3$ there exists $M_{r}$ such that  
      if $T$ is an Erd\H{o}s--S\'{o}s tree on $k\ge M_r$ vertices, 
      then there exist $N_0$ and $c_T>0$ such that for all integers $n \ge N_0$ and $t \le c_T n$, we have 
      \begin{align*}
          \mathrm{EX}\left(n,(t+1)H^{\mathrm{Exp}(T)}_{k+r-2}\right)
          = K_{t}^{r} \uproduct T_r(n-t,k+r-3). 
      \end{align*}
\end{theorem}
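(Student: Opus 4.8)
The plan is to derive this as a direct application of Theorem~\ref{THM:Main-simple} (together with its rainbow companion Theorem~\ref{THM:main-rainbow} for the edge-count and the subsequent structural argument for the uniqueness of the extremal construction). To invoke Theorem~\ref{THM:Main-simple} with $F = H^{\mathrm{Exp}(T)}_{k+r-2}$, I need to verify that $F$ is nondegenerate and that it satisfies the two hypotheses (a) boundedness and (b) smoothness for suitable parameters. Nondegeneracy is immediate from the known Tur\'{a}n-density result: since $\mathrm{EX}(n, F) = \{T_r(n, k+r-3)\}$ for large $n$ (Norin--Yepremyan, Brandt--Irwin--Jiang), we have $\pi(F) = \lambda(K_{k+r-3}^r) > 0$ because $k+r-3 \ge r$ once $k$ is large; in fact $(F, K_{k+r-3}^r)$ is a Tur\'{a}n pair with $K_{k+r-3}^r$ a minimal pattern (the complete pattern on $k+r-3 \ge r$ vertices is minimal, as deleting any vertex strictly drops the Lagrangian).

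The two hypotheses then follow from the general machinery in Section~\ref{SUBSEC:Bound-Smooth}. For smoothness, Theorem~\ref{THM:Turan-pair-smooth} applies directly: since $(F, K_{k+r-3}^r)$ is a Tur\'{a}n pair with a minimal pattern, $\mathrm{ex}(n, F)$ is $4\binom{n-1}{r-2}$-smooth, which is certainly $\frac{1-\pi(F)}{8m}\binom{n}{r-1}$-smooth for large $n$ since $\binom{n-1}{r-2} = o\!\left(\binom{n}{r-1}\right)$. For boundedness, I apply Theorem~\ref{THM:vertex-extend-bounded}: I need $F$ to be edge-stable and weakly vertex-extendable with respect to $K_{k+r-3}^r$. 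Edge-stability is exactly what Norin--Yepremyan and Brandt--Irwin--Jiang proved. Weak vertex-extendability is the one genuinely new input, and this is where the citation to Lemmas~3.5 and~4.1 of~\cite{NY18} does the work: those lemmas control the degree of a vertex $v$ in an $F$-free host whose deletion $\mathcal{H} - v$ is a $K_{k+r-3}^r$-subconstruction (equivalently a $(k+r-3)$-partite $r$-graph), showing $d_{\mathcal{H}}(v) \le (\pi(F) + \delta)\binom{n-1}{r-1}$; I should spell out briefly how their link/shadow estimates translate into the precise statement in the definition of weak vertex-extendability. Given these two properties, Theorem~\ref{THM:vertex-extend-bounded} yields a constant $c > 0$ for which $F$ is $\left(c\binom{n-1}{r-1}, \frac{1-\pi(F)}{8m}\binom{n-1}{r-1}\right)$-bounded, hence also $\left(c'\binom{n}{r-1}, \frac{1-\pi(F)}{4m}\binom{n}{r-1}\right)$-bounded for a slightly smaller $c'$ and large $n$ (absorbing the factor between $\binom{n-1}{r-1}$ and $\binom{n}{r-1}$, and noting $\frac{1-\pi(F)}{8m} \le \frac{1-\pi(F)}{4m}$).

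With both hypotheses of Theorem~\ref{THM:Main-simple} checked, there is $N_0$ and a threshold $t \le \min\{\frac{c'}{4erm}n, \frac{1-\pi(F)}{64rm^2}n\} =: c_T n$ such that $\mathrm{EX}(n, (t+1)F) = K_t^r \uproduct \mathrm{EX}(n-t, F) = K_t^r \uproduct \{T_r(n-t, k+r-3)\}$, which is precisely the claimed identity (here $m = v(F) = k + r - 2 + (r-2)\binom{k+1}{2}$, minus the edges already present in $T$; in any case $m$ depends only on $k$ and $r$, so $c_T$ depends only on $T$). The quantity $M_r$ in the statement is the threshold on $k$ below which the edge-stability / extremal-construction results of~\cite{NY18,BIJ17} are not known to hold; for $k \ge M_r$ everything above goes through. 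The main obstacle — and the only place requiring real verification rather than bookkeeping — is confirming that the arguments in~\cite{NY18} indeed give weak vertex-extendability in the exact form demanded by the definition (the right quantifier order on $\delta, \zeta, N_0$ and the right normalization of the degree bound); everything else is an assembly of already-established stability inputs with the black-box Theorems~\ref{THM:Main-simple},~\ref{THM:Turan-pair-smooth}, and~\ref{THM:vertex-extend-bounded}.
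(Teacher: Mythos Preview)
Your proposal is correct and follows essentially the same route as the paper: establish that $(H^{\mathrm{Exp}(T)}_{k+r-2}, K_{k+r-3}^r)$ is a Tur\'{a}n pair with minimal pattern, invoke edge-stability from~\cite{NY18,BIJ17} and weak vertex-extendability via Lemmas~3.5 and~4.1 of~\cite{NY18}, then feed these into Theorems~\ref{THM:Turan-pair-smooth} and~\ref{THM:vertex-extend-bounded} to obtain the hypotheses of Theorem~\ref{THM:Main-simple}. The paper does not give a separate written-out proof beyond this assembly of inputs, so your level of detail already exceeds what the paper provides.
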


Next, we consider the expansion of a different class of hypergraphs. 
Let $B(r, \ell+1)$ be the $r$-graph with vertex set $[\ell+1]$ and edge set 
\begin{align*}
    \left\{[r]\right\} 
    \cup \left\{e\subset [2, \ell+1] \colon |e|=r \text{ and } |e \cap [2,r]| \le 1\right\}. 
\end{align*}

Recall that the Lagrangian of an $r$-graph $\mathcal{H}$ (by viewing $\mathcal{H}$ as a pattern) is denoted by $\lambda(\mathcal{H})$.
%
For integers $\ell \ge r \ge 2$ let the family $\mathcal{F}_{\ell+1}^r$ be the collection of $r$-graphs $F$ with the following properties: 
\begin{enumerate}[label=(\alph*)]
    \item
        $\sup\left\{\lambda(\mathcal{H}) \colon\text{$\mathcal{H}$ is $F$-free and not a $K_{\ell}^r$-subconstruction}\right\} < \frac{\ell \cdots (\ell-r+1)}{\ell^r}$, and
    \item
        either $F$ has an isolated vertex or $F\subset B(r, \ell+1)$.  
\end{enumerate}

For every $F\in \mathcal{F}_{\ell+1}^r$ the vertex-extendability\footnote{The weak vertex-extendability of $F\in \mathcal{F}_{\ell+1}^r$ with an isolated vertex also follows from Lemma~3.4 in~\cite{NY18}.} of the expansion $H_{\ell+1}^{F}$ can be easily obtained by a small modification of the proof of Lemma~4.8 in~\cite{LMR23unif} (also see the Concluding Remarks in~\cite{LMR23unif}). 
Hence, we obtain the following result. 

\begin{theorem}\label{THM:Hypergraph-Expansion-A}
    Suppose that $\ell\ge r\ge 2$ are integers and $F\in \mathcal{F}_{\ell+1}^r$. 
    Then there exist constants $N_0$ and $c_F > 0$ such that for all integers $n \ge N_0$ and $t\in [0, c_F n]$, we have 
    \begin{align*}
        \mathrm{EX}\left(n, (t+1)H_{\ell+1}^{F}\right) = \left\{K_{t}^{r} \uproduct T_{r}(n-t,\ell)\right\}. 
    \end{align*}
\end{theorem}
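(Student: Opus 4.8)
The plan is to derive Theorem~\ref{THM:Hypergraph-Expansion-A} as a direct application of the machinery already assembled in the paper, so the real content is to verify the hypotheses of Theorem~\ref{THM:Main-simple} (via Corollary~\ref{CORO:degree-stable-smooth-bounded} or Theorems~\ref{THM:Turan-pair-smooth} and~\ref{THM:vertex-extend-bounded}) for $F' = H_{\ell+1}^F$ when $F \in \mathcal{F}_{\ell+1}^r$. First I would record the Tur\'{a}n-pair structure: by the results of Pikhurko~\cite{PI13} and Norin--Yepremyan~\cite{NY18} (combined with property~(a) in the definition of $\mathcal{F}_{\ell+1}^r$, which exactly says that any $F$-free construction that is not a $K_\ell^r$-subconstruction has Lagrangian strictly below $\ell\cdots(\ell-r+1)/\ell^r = \lambda(K_\ell^r)$), one gets that $\left(H_{\ell+1}^F, K_\ell^r\right)$ is a Tur\'{a}n pair with $\pi(H_{\ell+1}^F) = \lambda(K_\ell^r)$ and $\mathrm{EX}(n, H_{\ell+1}^F) = \{T_r(n,\ell)\}$ for large $n$. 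Since $K_\ell^r$ is a minimal pattern, Theorem~\ref{THM:Turan-pair-smooth} immediately gives $4\binom{n-1}{r-2}$-smoothness, which certainly beats the $\frac{1-\pi(F)}{8m}\binom{n-1}{r-1}$-smoothness required in hypothesis~(b) of Theorem~\ref{THM:Main-simple} once $n$ is large.

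The substantive step is boundedness, i.e.\ hypothesis~(a) of Theorem~\ref{THM:Main-simple}, and by Theorem~\ref{THM:vertex-extend-bounded} it suffices to establish that $H_{\ell+1}^F$ is edge-stable and weakly vertex-extendable with respect to $K_\ell^r$. Edge-stability is supplied by~\cite{PI13} (for the $K_{\ell+1}$-type case) and by~\cite{NY18, BIJ17} (in the expanded-tree flavour), and more generally follows from property~(a) together with a standard supersaturation/removal argument: any $H_{\ell+1}^F$-free $r$-graph with density close to $\lambda(K_\ell^r)$ has, after deleting few edges, all of its ``dense part'' lying inside a $K_\ell^r$-subconstruction, precisely because alternative dense configurations have Lagrangian bounded away from $\lambda(K_\ell^r)$. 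For weak vertex-extendability I would invoke the stated fact that, for $F \in \mathcal{F}_{\ell+1}^r$, the vertex-extendability of $H_{\ell+1}^F$ follows from a small modification of the proof of Lemma~4.8 in~\cite{LMR23unif} (and in the isolated-vertex case also from Lemma~3.4 in~\cite{NY18}); by the remark after Lemma~\ref{LEMMA:LP-pattern-min-max}, vertex-extendability implies weak vertex-extendability for a Tur\'{a}n pair. Hence Corollary~\ref{CORO:degree-stable-smooth-bounded}---or directly Theorems~\ref{THM:Turan-pair-smooth} and~\ref{THM:vertex-extend-bounded}---yields the smoothness and the $\left(c\binom{n-1}{r-1}, \frac{1-\pi(F)}{8m}\binom{n-1}{r-1}\right)$-boundedness of $H_{\ell+1}^F$ for a suitable $c = c(F,\ell,r) > 0$.

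With both hypotheses of Theorem~\ref{THM:Main-simple} in hand (noting $\binom{n-1}{r-1} \le \binom{n}{r-1}$ so the $\binom{n-1}{r-1}$-scaled bounds imply the $\binom{n}{r-1}$-scaled ones after adjusting $c$ by a constant, and $\frac{1-\pi(F)}{8m} < \frac{1-\pi(F)}{4m}$ matches the $f_2$ requirement), I would simply apply Theorem~\ref{THM:Main-simple} to $F' = H_{\ell+1}^F$ with $m' = v(H_{\ell+1}^F)$: there exist $N_0$ and, setting $c_F := \min\{\frac{c}{4erm'}, \frac{1-\pi(F)}{64r(m')^2}\}$, for all $n \ge N_0$ and $t \le c_F n$ one obtains
\begin{align*}
    \mathrm{EX}\left(n, (t+1)H_{\ell+1}^{F}\right)
        = K_{t}^r \uproduct \mathrm{EX}(n-t, H_{\ell+1}^F)
        = \left\{K_{t}^r \uproduct T_r(n-t,\ell)\right\},
\end{align*}
where the last equality uses $\mathrm{EX}(n-t, H_{\ell+1}^F) = \{T_r(n-t,\ell)\}$ for $n-t$ large (guaranteed by enlarging $N_0$, since $t \le c_F n$ forces $n - t \ge (1-c_F)n \to \infty$). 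This is exactly the claimed statement.

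The main obstacle is not in the final invocation of Theorem~\ref{THM:Main-simple}, which is purely mechanical, but in cleanly justifying the weak vertex-extendability of $H_{\ell+1}^F$ uniformly over the whole family $\mathcal{F}_{\ell+1}^r$: the two subfamilies (those $F$ with an isolated vertex, and those with $F \subset B(r,\ell+1)$) are handled by somewhat different arguments in~\cite{NY18} and~\cite{LMR23unif}, and one must check that the adaptation of Lemma~4.8 in~\cite{LMR23unif} genuinely goes through under only the Lagrangian hypothesis~(a) rather than under the more specific structural assumptions used there. I expect this to require re-running the link/neighbourhood analysis for an almost-extremal $H_{\ell+1}^F$-free $\mathcal{H}$: one picks a vertex $v$ whose link $\mathcal{H}_v$ is a near-extremal $(r-1)$-graph, argues that most of $V(\mathcal{H})-v$ is covered by a $K_\ell^r$-partition, and then shows that a vertex violating $d_\mathcal{H}(v) \le (\pi(F)+\delta)\binom{n-1}{r-1}$ forces, together with the already-located clean structure, a copy of $H_{\ell+1}^F$ or else a sub-configuration whose Lagrangian contradicts~(a). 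Once this local argument is in place for both subfamilies, everything else is bookkeeping against the already-proved general theorems.
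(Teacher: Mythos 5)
Your proposal is correct and follows essentially the same route as the paper: the paper also obtains Theorem~\ref{THM:Hypergraph-Expansion-A} by noting that $(H_{\ell+1}^{F},K_{\ell}^{r})$ is a Tur\'{a}n pair with $\mathrm{EX}(n,H_{\ell+1}^{F})=\{T_r(n,\ell)\}$, deducing smoothness from Theorem~\ref{THM:Turan-pair-smooth}, deducing boundedness from Theorem~\ref{THM:vertex-extend-bounded} via edge-stability and the (weak) vertex-extendability supplied by the modification of Lemma~4.8 of~\cite{LMR23unif} (and Lemma~3.4 of~\cite{NY18} in the isolated-vertex case), and then invoking Theorem~\ref{THM:Main-simple}. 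Your bookkeeping remarks (rescaling the boundedness parameters from $\binom{n-1}{r-1}$ to $\binom{n}{r-1}$, and using Theorems~\ref{THM:Turan-pair-smooth} and~\ref{THM:vertex-extend-bounded} directly rather than Corollary~\ref{CORO:degree-stable-smooth-bounded}, since degree-stability is not available for the whole family) are exactly the right adjustments and consistent with the paper's argument.
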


\textbf{Remarks.}
\begin{itemize}
    \item In~\cite{MP07Fan}, Mubayi and Pikhurko considered the Tur\'{a}n problem for the $r$-graph $\mathrm{Fan}^r$ (the generalized Fan), which is the expansion of the $r$-graph on $r+1$ vertices with only one edge. It is easy to see that $\mathrm{Fan}^r$ is a member in $\mathcal{F}_{r+1}^{r}$. 
    \item  The Tur\'{a}n problem for the expansion of certain class of $r$-graphs (which is a proper subfamily of $\mathcal{F}_{\ell+1}^r$) were studied previously in~\cite{BIJ17} and~\cite{NY18}. 
    \item Let $M_{k}^{r}$ denote the $r$-graph consisting of $k$ vertex-disjoint edges (i.e. a matching of size $k$) and let $L_{k}^{r}$ denote the $r$-graph consisting of $k$ edges having one vertex, say $v$, in common, and every pair of edges interest only at $v$ (i.e. a $k$-edge sunflower with the center $v$). 
    By results in~\cite{HK13,JPW18}, if $F$ is isomorphic to $M_{k}^{3}$ (see~\cite{HK13} for $k=2$ and~\cite{JPW18} for $k \ge 3$), $L_{k}^{3}$ (see~\cite{JPW18}), or $L_{k}^4$ (see~\cite{JPW18}), where $k \ge 2$ is an integer, then $F$ is contained in $\mathcal{F}_{\ell+1}^r$. 
\end{itemize}

Now we focus on the expansion of $r$-uniform matching of size two with $r\ge 4$. 
We say an $r$-graph is \textbf{semibipartite} if its vertex set can be partitioned into two parts $V_1$ and $V_2$ such that every edge contains exactly one vertex in $V_1$. 
Let $S_r(n)$ denote the semibipartite $r$-graph on $n$ vertices with the maximum number of edges. 
Simply calculations show that $|S_r(n)| \sim \left(\frac{r-1}{r}\right)^{r-1}\binom{n}{r}$. 

Confirming a conjecture of Hefetz and Keevash~\cite{HK13},
Bene Watts, Norin, and Yepremyan~\cite{BNY19} showed that for $r\ge 4$, $\mathrm{EX}\left(n, H_{2r}^{M_{2}^{r}}\right) = \{S_r(n)\}$ for all sufficiently large $n$. 

The vertex-extendability\footnote{The weak vertex-extendability of $H_{2r}^{M_{2}^{r}}$ also follows from Theorem~3.2 in~\cite{BNY19}} of $H_{2r}^{M_{2}^{r}}$ can be easily obtained by a small modification of the proof of Lemma~4.12 in~\cite{LMR23unif} (also see the Concluding Remarks in~\cite{LMR23unif}). 
Hence we have the following result. 

\begin{theorem}\label{THM:Hypergraph-Expansion-B}
    For every integer $r \ge 4$,
    there exist constants $N_0$ and $c = c(r) > 0$ such that for all integers $n \ge N_0$ and $t\in [0, c n]$, we have 
    \begin{align*}
        \mathrm{EX}\left(n, (t+1)H_{2r}^{M_{2}^{r}}\right) = \left\{K_{t}^{r} \uproduct S_{r}(n-t)\right\}. 
    \end{align*}
\end{theorem}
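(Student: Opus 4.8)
The plan is to derive Theorem~\ref{THM:Hypergraph-Expansion-B} as a direct instance of the general machinery in Theorem~\ref{THM:Main-simple}, exactly as was done for the earlier applications. First I would record the relevant Tur\'an-theoretic data for $F := H_{2r}^{M_2^r}$: by the result of Bene Watts, Norin, and Yepremyan~\cite{BNY19} we have $\mathrm{EX}(n, F) = \{S_r(n)\}$ for large $n$, and $S_r(n)$ is a $P$-construction for the pattern $P = (r, E)$ whose parts are $V_1, V_2$ with $E$ consisting of the single profile that puts multiplicity $1$ on part $1$ and multiplicity $r-1$ on part $2$ (the semibipartite pattern). One checks that $P$ is a minimal pattern: deleting part $1$ kills every edge, and deleting part $2$ also kills every edge since each edge uses $r-1 \ge 2$ vertices from $V_2$; hence $\lambda(P-i) = 0 < \lambda(P) = ((r-1)/r)^{r-1}$ for both $i$. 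Thus $(F, P)$ is a Tur\'an pair with a minimal pattern, so Theorem~\ref{THM:Turan-pair-smooth} gives that $\mathrm{ex}(n,F)$ is $4\binom{n-1}{r-2}$-smooth, which is certainly $\frac{1-\pi(F)}{8m}\binom{n}{r-1}$-smooth for large $n$ (here $m = v(F) = 2r$).

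Next I would establish the boundedness hypothesis (a) of Theorem~\ref{THM:Main-simple}. By Theorem~\ref{THM:vertex-extend-bounded}, it suffices to show that $F$ is edge-stable and weakly vertex-extendable with respect to $P$. Edge-stability of $H_{2r}^{M_2^r}$ with respect to the semibipartite pattern is part of the Bene Watts--Norin--Yepremyan analysis (it underlies their exact result). For weak vertex-extendability, I would invoke the footnote in the statement: it follows from Theorem~3.2 in~\cite{BNY19}, but to make the paper self-contained along the lines promised in Section~\ref{SEC:Applications}, one can alternatively adapt the proof of Lemma~4.12 in~\cite{LMR23unif}. Concretely, suppose $\mathcal{H}$ is $F$-free on $n$ vertices with $\delta(\mathcal{H}) \ge (\pi(F)-\zeta)\binom{n-1}{r-1}$ and $\mathcal{H}-v$ is a semibipartite subconstruction with parts $A_1, A_2$; the task is to bound $d_{\mathcal{H}}(v)$ by $(\pi(F)+\delta)\binom{n-1}{r-1}$. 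The key point is that if $v$ had too many edges, one could find a copy of $F = H_{2r}^{M_2^r}$: one edge of the matching together with its expansion vertices sitting near $v$, and a second vertex-disjoint edge plus expansion vertices inside the large semibipartite part of $\mathcal{H}-v$, using that $S_r(n-1)$ is dense enough to host one expanded edge away from any bounded set of forbidden vertices. This is the standard "link of $v$ cannot be too spread out" argument.

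With (a) and (b) in hand, Theorem~\ref{THM:Main-simple} applies with $r$, $m = 2r$, and the constant $c$ produced by Theorem~\ref{THM:vertex-extend-bounded}; setting $c(r) := \min\{\frac{c}{4erm}, \frac{1-\pi(F)}{64rm^2}\}$ and taking $N_0$ as in that theorem, we get for all $n \ge N_0$ and $t \in [0, c(r)n]$ that
\begin{align*}
    \mathrm{EX}\left(n, (t+1)H_{2r}^{M_2^r}\right) = K_t^r \uproduct \mathrm{EX}(n-t, H_{2r}^{M_2^r}) = \left\{K_t^r \uproduct S_r(n-t)\right\},
\end{align*}
where the last equality uses the Bene Watts--Norin--Yepremyan uniqueness of the extremal construction for $n-t$ large (which holds since $n - t \ge (1 - c(r))n \to \infty$). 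This is exactly the claimed statement.

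The main obstacle I anticipate is verifying the weak vertex-extendability cleanly. While it is "easy" in the sense that it follows from existing machinery, one must be careful that the relevant lemmas in~\cite{LMR23unif} and~\cite{BNY19} are stated for the right pattern and that the degree threshold $(\pi(F)-\zeta)\binom{n-1}{r-1}$ is compatible; in particular the argument must handle the asymmetry of the semibipartite pattern (the two parts play very different roles, unlike the $K_\ell^r$ case), so the extension step that "reattaches" $v$ to the correct side needs the stability to already pin down which part $v$ should join. Everything else — minimality of $P$, the smoothness bound, and the final bookkeeping of constants — is routine given the results quoted in the excerpt.
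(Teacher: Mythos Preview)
Your approach is essentially identical to the paper's: cite Bene Watts--Norin--Yepremyan for the Tur\'an pair and edge-stability, obtain (weak) vertex-extendability from~\cite{LMR23unif} Lemma~4.12 or~\cite{BNY19} Theorem~3.2, then feed everything through Theorems~\ref{THM:Turan-pair-smooth}, \ref{THM:vertex-extend-bounded}, and~\ref{THM:Main-simple}. Two minor slips to fix: the semibipartite pattern is $P = (2,E)$, not $(r,E)$, and $m = v(H_{2r}^{M_2^r}) = 2r + (r-2)r^2$, not $2r$ (the expansion adds $r-2$ new vertices for each of the $r^2$ uncovered pairs in $M_2^r$); neither affects the argument since the smoothness bound $4\binom{n-1}{r-2}$ beats any $\frac{1-\pi(F)}{8m}\binom{n}{r-1}$ and the final constant $c(r)$ just comes out smaller.
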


\textbf{Remark.}
It is quite possible that Theorem~\ref{THM:Main-simple} applies to the expansion of other hypergraphs,  
for example, the $3$-graph defined in~\cite{YP22} which provides the first example of a single hypergraph whose Tur\'{a}n density is an irrational number.  
\subsection{Expanded triangles}\label{SUBSEC:expanded-triangle}
Let $\mathcal{C}^{2r}_{3}$ denote the $2r$-graph with vertex set $[3r]$ and edge set 
\begin{align*}
    \left\{\{1,\ldots, r, r+1, \ldots, 2r\}, \{r+1, \ldots, 2r, 2r+1, \ldots, 3r\}, \{1,\ldots, r, 2r+1, \ldots, 3r\}\right\}. 
\end{align*}
Let $[n]= V_1 \cup V_2$ be a partition such that $|V_1| = \lfloor n/2 \rfloor+m$. 
Let $B_{2r}^{\mathrm{odd}}(n,m)$ denote the $2r$-graph on $[n]$ whose edge set consists of all $2r$-sets that interest $V_1$ in odd number of vertices. 
Some calculations show that $\max_{m}|B_{2r}^{\mathrm{odd}}(n,m)|\sim \frac{1}{2}\binom{n}{2r}$. 
Let $B_{2r}^{\mathrm{odd}} = (2,E)$ denote the pattern such that $E$ consists of all $2r$-multisets that contain exactly odd number of $1$s. 
Note that $B_{2r}^{\mathrm{odd}}(n,m)$ is a $B_{2r}^{\mathrm{odd}}$-construction.

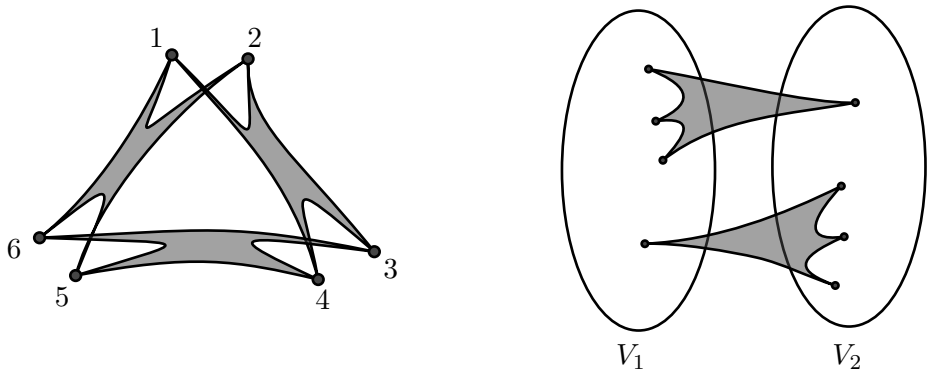
\begin{figure}[htbp]
\centering
\tikzset{every picture/.style={line width=1pt}} 

\begin{tikzpicture}[x=0.75pt,y=0.75pt,yscale=-1,xscale=1,line join=round]

\draw   (507.85,205.65) .. controls (486.76,205.66) and (469.64,169.64) .. (469.62,125.19) .. controls (469.6,80.75) and (486.68,44.71) .. (507.77,44.7) .. controls (528.86,44.69) and (545.98,80.71) .. (546,125.15) .. controls (546.02,169.6) and (528.94,205.64) .. (507.85,205.65) -- cycle ;
\draw   (402.85,207.65) .. controls (381.76,207.66) and (364.64,171.64) .. (364.62,127.19) .. controls (364.6,82.75) and (381.68,46.71) .. (402.77,46.7) .. controls (423.86,46.69) and (440.98,82.71) .. (441,127.15) .. controls (441.02,171.6) and (423.94,207.64) .. (402.85,207.65) -- cycle ;
\draw  [fill=uuuuuu, fill opacity=0.5]  (407.85,76.14) .. controls (424.42,82.69) and (435.22,91.97) .. (411.45,102.33) .. controls (432.34,99.6) and (426.32,109.74) .. (415,122) .. controls (443.09,99.63) and (479.31,99) .. (511,93) .. controls (489.4,94.09) and (435.22,80.51) .. (407.85,76.14) --cycle;
\draw  [fill=uuuuuu, fill opacity=0.5]  (501,185) .. controls (482.36,178.05) and (479.1,172.04) .. (505.43,160.45) .. controls (482.16,163.67) and (491.6,148.51) .. (504,135)  .. controls (474.51,149.61) and (442.47,162.24) .. (406,164)
 .. controls   (422.94,163.37) and (449,168) .. (462,171)  .. controls (475,174) and (487.72,180.88) .. (501,185) --cycle;
\draw  [fill=uuuuuu, fill opacity=0.5]   (170,69) .. controls (155,121) and (140,114) .. (208,71)  .. controls (171,96) and (132,143) .. (122,180) .. controls (143,128) and (144,130) .. (104,161) .. controls (132,139) and (156,101) .. (170,69) --cycle; 
\draw  [fill=uuuuuu, fill opacity=0.5]  (243,182) .. controls (235,137) and (222,126) .. (271,168) .. controls (231,118) and (207,104) .. (208,71) .. controls (208,121) and (218,116) .. (170,69) .. controls (201,105) and (230,131) .. (243,182) --cycle;
\draw  [fill=uuuuuu, fill opacity=0.5]  (243,182) .. controls (201,172) and (181,169) .. (122,180) .. controls (183,161) and (186,164) .. (104,161) .. controls (188,157) and  (199,153) .. (271,168) .. controls (193,158) and (197,160) .. (243,182) --cycle;
\draw [fill=uuuuuu]  (170,69)  circle (2pt);
\draw [fill=uuuuuu] (208,71) circle (2pt);
\draw [fill=uuuuuu] (122,180) circle (2pt);
\draw [fill=uuuuuu] (104,161) circle (2pt);
\draw [fill=uuuuuu] (243,182) circle (2pt);
\draw [fill=uuuuuu] (271,168) circle (2pt);

\draw [fill=uuuuuu]  (407.85,76.14) circle (1.2pt);
\draw [fill=uuuuuu] (411.45,102.33) circle (1.2pt);
\draw [fill=uuuuuu] (415,122) circle (1.2pt);
\draw [fill=uuuuuu]  (511,93) circle (1.2pt);
\draw [fill=uuuuuu] (501,185) circle (1.2pt);
\draw [fill=uuuuuu] (505.43,160.45) circle (1.2pt);
\draw [fill=uuuuuu] (504,135) circle (1.2pt);
\draw [fill=uuuuuu] (406,164) circle (1.2pt);
\draw (390,213) node [anchor=north west][inner sep=0.75pt]   [align=left] {$V_1$};
\draw (498,213) node [anchor=north west][inner sep=0.75pt]   [align=left] {$V_2$};
\draw (157,54) node [anchor=north west][inner sep=0.75pt]   [align=left] {$1$};
\draw (206,54) node [anchor=north west][inner sep=0.75pt]   [align=left] {$2$};
\draw (86,160) node [anchor=north west][inner sep=0.75pt]   [align=left] {$6$};
\draw (110,184) node [anchor=north west][inner sep=0.75pt]   [align=left] {$5$};
\draw (240,186) node [anchor=north west][inner sep=0.75pt]   [align=left] {$4$};
\draw (274,170) node [anchor=north west][inner sep=0.75pt]   [align=left] {$3$};

\end{tikzpicture}

\caption{The $4$-graph $\mathcal{C}_{3}^{4}$ (expanded triangle) and the $4$-graph $B_{4}^{\mathrm{odd}}(n)$.}
\label{fig:expandedK3}
\end{figure}

The Tur\'{a}n problem for $\mathcal{C}^{2r}_{3}$ was first considered by
Frankl~\cite{Frankl90}, who proved that $\pi(\mathcal{C}^{2r}_{3}) = 1/2$. 
Later, Keevash and Sudakov~\cite{KS05a} proved that $\mathcal{C}^{2r}_{3}$ is edge-stable with respect to $B_{2r}^{\mathrm{odd}}$, and moreover, $\mathrm{EX}(n,\mathcal{C}^{2r}_{3}) \subset \left\{B_{2r}^{\mathrm{odd}}(n,m)\colon m\in [0,n/2]\right\}$.
Simple constructions\footnote{For example, choose a set $S$ of $2r$ vertices from $V_1$ in $B_{2r}^{\mathrm{odd}}(n,0)$, then remove all edges in $B_{2r}^{\mathrm{odd}}(n,0)$ that contain at least two vertices in $S$ and add $S$ to the edge set.} show that $\mathcal{C}^{2r}_{3}$ is not degree-stable (or vertex-extendable) with respect to $B_{2r}^{\mathrm{odd}}$. 
However, using Claim~3.5 in~\cite{KS05a}, one can easily show that $\mathcal{C}^{2r}_{3}$ is weakly vertex-extendable with respect to $B_{2r}^{\mathrm{odd}}$.
Hence, we have the following theorem.

\begin{theorem}\label{THM:expanded-triangle}
    For every integer $r \ge 2$
    there exist constants $N_0$ and $c > 0$ such that for all integers $n \ge N_0$ and $t\in [0, c n]$, we have 
    \begin{align*}
        \mathrm{EX}\left(n, (t+1)\mathcal{C}^{2r}_{3}\right) \subset K_{t}^{2r} \uproduct \left\{B_{2r}^{\mathrm{odd}}(n-t,m)\colon m\in \left[0,\sqrt{2r(n-t)}\right]\right\}. 
    \end{align*}
\end{theorem}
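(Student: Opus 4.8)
The plan is to derive Theorem~\ref{THM:expanded-triangle} as a more-or-less direct application of Theorems~\ref{THM:Main-simple},~\ref{THM:Turan-pair-smooth}, and~\ref{THM:vertex-extend-bounded}, once the hypotheses of those theorems have been verified for $F = \mathcal{C}^{2r}_3$ with the pattern $P = B_{2r}^{\mathrm{odd}}$. First I would record the three ingredients that the literature supplies: $\pi(\mathcal{C}^{2r}_3) = 1/2$ (Frankl~\cite{Frankl90}), so $(F,P)$ is a Tur\'an pair with $\lambda(P) = 1/2$; the pattern $B_{2r}^{\mathrm{odd}}$ is minimal (a short check that deleting either part strictly drops the Lagrangian — deleting one part of a $2$-part pattern leaves a single isolated class with no edge of odd $1$-multiplicity supported on it, hence Lagrangian $0 < 1/2$); and, by Keevash--Sudakov~\cite{KS05a}, $\mathcal{C}^{2r}_3$ is edge-stable with respect to $B_{2r}^{\mathrm{odd}}$ and every extremal $\mathcal{C}^{2r}_3$-free graph is of the form $B_{2r}^{\mathrm{odd}}(n,m)$. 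Then, using Claim~3.5 in~\cite{KS05a}, I would spell out the weak vertex-extendability: given an $n$-vertex $\mathcal{C}^{2r}_3$-free $\mathcal{H}$ with $\delta(\mathcal{H}) \ge (1/2 - \zeta)\binom{n-1}{2r-1}$ and $\mathcal{H} - v$ a $B_{2r}^{\mathrm{odd}}$-subconstruction, that Claim forces $d_{\mathcal{H}}(v) \le (1/2 + \delta)\binom{n-1}{2r-1}$, which is exactly the definition.

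With weak vertex-extendability and edge-stability in hand, Theorem~\ref{THM:vertex-extend-bounded} gives a constant $c>0$ so that $\mathcal{C}^{2r}_3$ is $\bigl(c\binom{n-1}{2r-1}, \frac{1-\pi(F)}{8m}\binom{n-1}{2r-1}\bigr)$-bounded for large $n$, where $m = v(\mathcal{C}^{2r}_3) = 3r$; since $\binom{n-1}{2r-1} \le \binom{n}{2r-1}$ and $\frac{1}{8m}\binom{n-1}{2r-1} \le \frac{1}{4m}\binom{n}{2r-1}$ for large $n$, hypothesis~(a) of Theorem~\ref{THM:Main-simple} holds (after shrinking $c$ if needed to absorb the $\binom{n-1}{2r-1}$-vs-$\binom{n}{2r-1}$ discrepancy). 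Theorem~\ref{THM:Turan-pair-smooth} gives that $\mathrm{ex}(n,\mathcal{C}^{2r}_3)$ is $4\binom{n-1}{2r-2}$-smooth, and since $4\binom{n-1}{2r-2} \le \frac{1-\pi(F)}{8m}\binom{n}{2r-1} = \frac{1}{16 \cdot 3r}\binom{n}{2r-1}$ for all sufficiently large $n$ (the left side is of lower order in $n$), hypothesis~(b) holds as well. Theorem~\ref{THM:Main-simple} then yields $N_0$ and a constant $c'>0$ (namely $c' = \min\{c/(4er m), (1-\pi(F))/(64 r m^2)\}$ with the appropriate $r$ replaced by $2r$ and $m = 3r$) such that for $n \ge N_0$ and $t \le c' n$,
\begin{align*}
\mathrm{EX}\bigl(n, (t+1)\mathcal{C}^{2r}_3\bigr) = K_t^{2r} \uproduct \mathrm{EX}(n-t, \mathcal{C}^{2r}_3).
\end{align*}

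It remains only to translate the abstract right-hand side $\mathrm{EX}(n-t,\mathcal{C}^{2r}_3)$ into the explicit family in the statement. By Keevash--Sudakov, $\mathrm{EX}(n-t,\mathcal{C}^{2r}_3) \subset \{B_{2r}^{\mathrm{odd}}(n-t,m) \colon m \in [0,(n-t)/2]\}$, which already gives $\mathrm{EX}(n,(t+1)\mathcal{C}^{2r}_3) \subset K_t^{2r}\uproduct\{B_{2r}^{\mathrm{odd}}(n-t,m)\colon m\in[0,(n-t)/2]\}$; to sharpen the range of $m$ to $[0,\sqrt{2r(n-t)}]$ I would observe that if $|V_1|$ deviates too far from $(n-t)/2$ then $|B_{2r}^{\mathrm{odd}}(n-t,m)|$ is strictly smaller than $|B_{2r}^{\mathrm{odd}}(n-t,0)|$, hence smaller than $\mathrm{ex}(n-t,\mathcal{C}^{2r}_3)$, so such a construction cannot be extremal — a standard binomial-convexity estimate shows the loss already exceeds the possible gain once $|m| \ge \sqrt{2r(n-t)}$ (roughly, moving $m$ vertices across the partition costs on the order of $m^2 n^{2r-2}$ edges while the extremal count changes by at most lower order). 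I expect this last truncation of the $m$-range to be the only genuinely computational part, and the main conceptual obstacle upstream is verifying weak vertex-extendability precisely in the form demanded by Definition — i.e.\ extracting the degree bound $d_{\mathcal{H}}(v) \le (\pi(F)+\delta)\binom{n-1}{2r-1}$ uniformly from Claim~3.5 of~\cite{KS05a} rather than merely the structural dichotomy, but since $\mathcal{C}^{2r}_3$ is explicitly flagged in the excerpt as weakly (not fully) vertex-extendable this should be exactly what that claim provides.
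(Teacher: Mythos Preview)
Your proposal is correct and follows exactly the route the paper takes: verify that $(\mathcal{C}^{2r}_3, B_{2r}^{\mathrm{odd}})$ is a Tur\'an pair with a minimal pattern, invoke Keevash--Sudakov~\cite{KS05a} for edge-stability and (via their Claim~3.5) weak vertex-extendability, feed these into Theorems~\ref{THM:Turan-pair-smooth} and~\ref{THM:vertex-extend-bounded} to get smoothness and boundedness, and then apply Theorem~\ref{THM:Main-simple}. The truncation of the $m$-range to $[0,\sqrt{2r(n-t)}]$ is likewise handled in the paper by citing the calculations in~\cite{KS05a}, so your sketch of the convexity estimate is in the same spirit.
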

\textbf{Remarks.}
\begin{itemize}
    \item Calculations in~\cite{KS05a} show that if $B_{2r}^{\mathrm{odd}}(n,m)$ is an optimal $B_{2r}^{\mathrm{odd}}$-construction, then $m < \sqrt{2rn}$.
    So it suffices to consider $m$ in the range $\left[0,\sqrt{2r(n-t)}\right]$ for Theorem~\ref{THM:expanded-triangle}. 
    \item In general, one could consider the expanded $K_{\ell+1}$ for $\ell\ge 3$. It seems that the above theorem can be extended to these hypergraphs in some cases. We refer the reader to~\cite{Sido94Tri} and~\cite{KS05a} for more details. 
\end{itemize}

\subsection{Hypergraph books}\label{SUBSEC:books}
Let $F_7$ ($4$-book with $3$-pages) denote the $3$-graph with vertex set $\{1,2,3,4,5,6,7\}$ and edge set 
\begin{align*}
    \left\{1234, 1235, 1236, 1237, 4567\right\}. 
\end{align*}
Let $B_{4}^{\mathrm{even}}(n)$ denote the maximum $B_{4}^{\mathrm{even}}:=\left(2, \{1,1,2,2\}\right)$-construction on $n$ vertices. 
Simply calculations show that $|B_4(n)| \sim \frac{3}{8}\binom{n}{4}$. 

\begin{figure}[htbp]
\centering
\tikzset{every picture/.style={line width=1pt}} 
\begin{tikzpicture}[x=0.75pt,y=0.75pt,yscale=-1,xscale=1,line join=round]

\draw   (492.85,231.61) .. controls (471.76,231.62) and (454.64,195.6) .. (454.62,151.15) .. controls (454.6,106.7) and (471.68,70.66) .. (492.77,70.65) .. controls (513.86,70.64) and (530.98,106.66) .. (531,151.11) .. controls (531.02,195.56) and (513.94,231.6) .. (492.85,231.61) -- cycle ;

\draw [fill=uuuuuu, fill opacity=0.5]   (131,109) .. controls (154,121) and (169,138) .. (136,157) .. controls (165,152) and (156.73,187.54) .. (141,210) .. controls (180,169) and (212,137) .. (256,126) .. controls (226,128) and (169,117) ..  (131,109) -- cycle;

\draw [fill=uuuuuu, fill opacity=0.5]   (131,109) .. controls (154,121) and (169,138) .. (136,157) .. controls (165,152) and (156.73,187.54) .. (141,210) .. controls (178,186) and (212,183) .. (257.37,180.69)  .. controls (221,182) and (170,126) .. (131,109) -- cycle;

\draw [fill=uuuuuu, fill opacity=0.5]   (131,109) .. controls (154,121) and (169,138) .. (136,157) .. controls (165,152) and (156.73,187.54) .. (141,210)  .. controls (183.73,150.54) and (229,91) .. (256,74) .. controls (214,91) and (190,105) .. (131,109);
\draw  [fill=uuuuuu, fill opacity=0.5]  (256,74) .. controls (274.28,83.53) and (268.91,126) .. (256,126) .. controls (274.28,135.53) and (270.27,180.69) .. (257.37,180.69) .. controls (275.65,190.22) and (276.91,234) .. (264,234) .. controls (296.26,221.75) and (281.81,83.53) .. (256,74) -- cycle;
\draw   (387.85,233.61) .. controls (366.76,233.62) and (349.64,197.6) .. (349.62,153.15) .. controls (349.6,108.7) and (366.68,72.66) .. (387.77,72.65) .. controls (408.86,72.64) and (425.98,108.66) .. (426,153.11) .. controls (426.02,197.56) and (408.94,233.6) .. (387.85,233.61) -- cycle ;
\draw   [fill=uuuuuu, fill opacity=0.5]   (375,132) .. controls (414,156) and (452,151) .. (488.35,128.8) .. controls (469.5,142.13) and (473.94,161.01) .. (486.13,173.23)  .. controls (445,155) and (422,160) .. (372.78,176.43)  .. controls (396,160) and (390,145) .. (375,132) -- cycle;
\draw [fill=uuuuuu] (131,109) circle (2pt);
\draw [fill=uuuuuu] (136,157) circle (2pt);
\draw [fill=uuuuuu] (141,210) circle (2pt);
\draw [fill=uuuuuu] (256,126) circle (2pt);
\draw [fill=uuuuuu] (257.37,180.69) circle (2pt);
\draw [fill=uuuuuu] (256,74) circle (2pt);
\draw [fill=uuuuuu] (264,234) circle (2pt);
\draw [fill=uuuuuu] (375,132) circle (1.2pt);
\draw [fill=uuuuuu] (488.35,128.8) circle (1.2pt);
\draw [fill=uuuuuu] (486.13,173.23) circle (1.2pt);
\draw [fill=uuuuuu] (372.78,176.43) circle (1.2pt);

\draw (374.16,240) node [anchor=north west][inner sep=0.75pt]   [align=left] {$V_1$};
\draw (482.77,240) node [anchor=north west][inner sep=0.75pt]   [align=left] {$V_2$};
\draw (116,104) node [anchor=north west][inner sep=0.75pt]   [align=left] {$1$};
\draw (116,148) node [anchor=north west][inner sep=0.75pt]   [align=left] {$2$};
\draw (116,202) node [anchor=north west][inner sep=0.75pt]   [align=left] {$3$};
\draw (250,81.48) node [anchor=north west][inner sep=0.75pt]   [align=left] {$4$};
\draw (250,131.69) node [anchor=north west][inner sep=0.75pt]   [align=left] {$5$};
\draw (250,186) node [anchor=north west][inner sep=0.75pt]   [align=left] {$6$};
\draw (250,228.69) node [anchor=north west][inner sep=0.75pt]   [align=left] {$7$};
\end{tikzpicture}
\caption{The $4$-graph $F_7$ ($4$-book with $3$ pages) and the $4$-graph $B_{4}^{\mathrm{even}}(n)$.}
\label{fig:F7}
\end{figure}
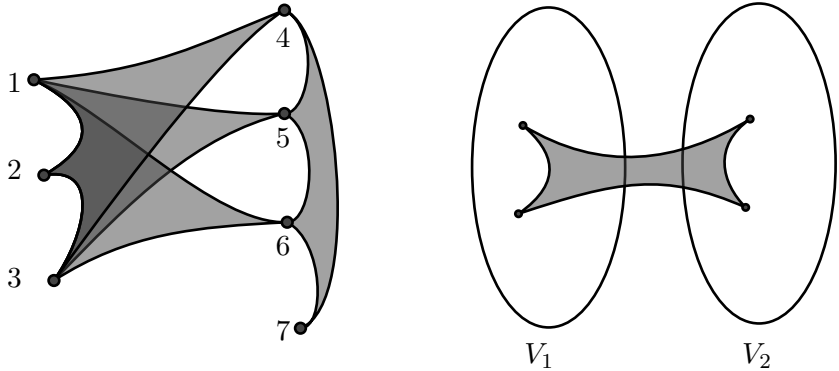

F\"{u}redi, Pikhurko, and Simonovits~\cite{FPS06Book} proved that $\mathrm{EX}(n, F_7) = \{B_4(n)\}$ for all sufficiently large $n$. 
Moreover, they proved that $F_7$ is degree-stable. 
Hence, we obtain the following result. 

\begin{theorem}\label{THM:Hypergraph-Book}
    There exist constants $N_0$ and $c > 0$ such that for all integers $n \ge N_0$ and $t\in [0, c n]$, we have 
    \begin{align*}
        \mathrm{EX}\left(n, (t+1)F_7\right) = \left\{K_{t}^{4} \uproduct B^{\mathrm{even}}_{4}(n-t)\right\}. 
    \end{align*}
\end{theorem}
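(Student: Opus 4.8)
The plan is to derive Theorem~\ref{THM:Hypergraph-Book} as a direct application of Theorem~\ref{THM:Main-simple} together with Corollary~\ref{CORO:degree-stable-smooth-bounded}, in exactly the same way that Theorems~\ref{THM:edge-critical}, \ref{THEOREM:Fano}, and~\ref{THM:Hypergraph-Expansion-B} are obtained. First I would record the ingredients that are already in the literature: $F_7$ is a nondegenerate $4$-graph on $m=7$ vertices with $\pi(F_7) = \lambda(B_4^{\mathrm{even}}) = 3/8$, and by F\"uredi--Pikhurko--Simonovits~\cite{FPS06Book} we have $\mathrm{EX}(n,F_7) = \{B_4(n)\}$ for all large $n$, where $B_4(n) = B_4^{\mathrm{even}}(n)$ is the maximum $B_4^{\mathrm{even}}$-construction. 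The key structural fact I would invoke is that $F_7$ is degree-stable with respect to the pattern $P := B_4^{\mathrm{even}} = (2, \{1,1,2,2\})$, which is also proved in~\cite{FPS06Book}; one should check (or cite) that $(F_7, P)$ is a Tur\'an pair and that $P$ is a minimal pattern, i.e. $\lambda(P-1) < \lambda(P)$ (deleting either part of the bipartition-with-two-vertices-on-each-side destroys all edges of the pattern, so $\lambda(P-i) = 0 < 3/8$), so $P$ is indeed minimal.

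Given degree-stability of $F_7$ with respect to the minimal pattern $P$, Corollary~\ref{CORO:degree-stable-smooth-bounded} supplies a constant $c>0$ such that $\mathrm{ex}(n,F_7)$ is $4\binom{n-1}{2}$-smooth and $F_7$ is $\bigl(c\binom{n-1}{3}, \frac{1-3/8}{8\cdot 7}\binom{n-1}{3}\bigr)$-bounded for large $n$. The next step is a routine but necessary bookkeeping check that these match hypotheses (a) and (b) of Theorem~\ref{THM:Main-simple} with $r=4$ and $m=7$: since $\binom{n-1}{r-1} = \binom{n-1}{3} \le \binom{n}{3}$, the boundedness with the stated error functions implies the $\bigl(c'\binom{n}{3}, \frac{1-\pi(F_7)}{4m}\binom{n}{3}\bigr)$-boundedness required in (a) after possibly shrinking $c$ to some $c'$ (note $\frac{1-\pi(F)}{8m} \le \frac{1-\pi(F)}{4m}$, so the maximum-degree bound only gets weaker, which is fine), and $4\binom{n-1}{r-2} = 4\binom{n-1}{2} = o\bigl(\binom{n}{3}\bigr)$ is eventually at most $\frac{1-\pi(F_7)}{8m}\binom{n}{3}$, giving (b). Then Theorem~\ref{THM:Main-simple} applies and yields, for all $n \ge N_0$ and all $t \le \min\bigl\{\frac{c}{4erm}n, \frac{1-\pi(F_7)}{64rm^2}n\bigr\}$,
\[
\mathrm{EX}(n,(t+1)F_7) = K_t^4 \uproduct \mathrm{EX}(n-t, F_7) = K_t^4 \uproduct \{B_4(n-t)\},
\]
which is precisely the claimed equality once we set $c := \min\bigl\{\frac{c}{4erm}, \frac{1-\pi(F_7)}{64rm^2}\bigr\}$ (renaming constants).

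The only genuine content beyond citation is verifying that the hypotheses of Corollary~\ref{CORO:degree-stable-smooth-bounded} are literally met, i.e. that the stability statement proved in~\cite{FPS06Book} is degree-stability in the precise sense defined in this paper (every $n$-vertex $F_7$-free $4$-graph with $\delta(\mathcal{H}) \ge (\pi(F_7) - \zeta)\binom{n-1}{3}$ is a $P$-subconstruction for some fixed $\zeta>0$ and large $n$), and that $(F_7, P)$ is a Tur\'an pair with $P$ minimal. I expect the main (mild) obstacle to be this translation step: papers in the area often phrase stability in an edge-count form or in a ``close to $B_4(n)$ in edit distance'' form rather than the clean ``minimum-degree $\Rightarrow$ exact subconstruction'' form, so one may need to extract the degree-stable version from the proof in~\cite{FPS06Book} (typically their symmetrization / link-graph analysis already gives it), or alternatively verify edge-stability plus vertex-extendability separately and appeal to Theorems~\ref{THM:Turan-pair-smooth} and~\ref{THM:vertex-extend-bounded} directly. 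Everything else is a mechanical substitution of $r=4$, $m=7$, $\pi=3/8$ into the general machinery, and no further combinatorial argument is required.
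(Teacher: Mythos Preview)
Your proposal is correct and follows exactly the paper's own route: the paper records that F\"uredi--Pikhurko--Simonovits~\cite{FPS06Book} proved $\mathrm{EX}(n,F_7)=\{B_4^{\mathrm{even}}(n)\}$ and that $F_7$ is degree-stable with respect to $B_4^{\mathrm{even}}$, and then simply invokes Corollary~\ref{CORO:degree-stable-smooth-bounded} together with Theorem~\ref{THM:Main-simple} to conclude. Your bookkeeping checks (minimality of the pattern, matching the $\binom{n-1}{r-1}$ versus $\binom{n}{r-1}$ forms of the bounds) are the only details not spelled out explicitly in the paper, and they are handled correctly.
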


Let $\mathbb{F}_{4,3}$ denote the $4$-graph with vertex set $\{1,2,3,4,5,6,7\}$ and edge set
\begin{align*}
    \left\{1234, 1235, 1236, 1237, 4567\right\}. 
\end{align*}
Let $B_{4}^{\mathrm{odd}}(n,m)$ denote the $B_4^{\mathrm{odd}}:=\left(2, \{\{1,2,2,2\}, \{1,1,1,2\}\}\right)$-construction on $n$ vertices with one part of size $\lfloor n/2 \rfloor + m$. 
Recall from the previous subsection that $\max_{m}|B_{4}^{\mathrm{odd}}(n,m)| \sim \frac{1}{2}\binom{n}{4}$. 

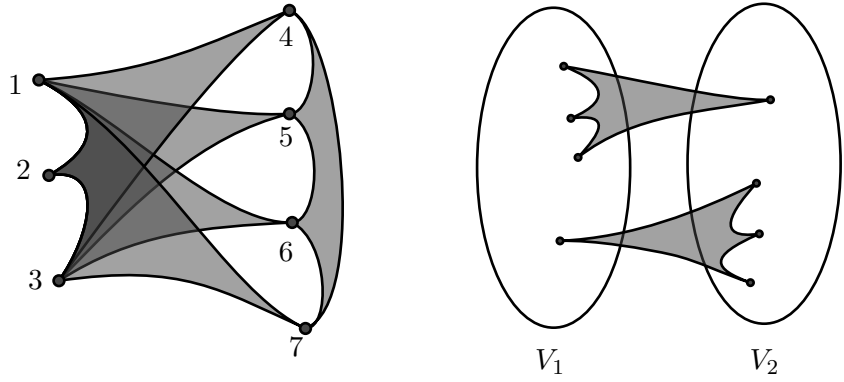
\begin{figure}[htbp]
\centering
\tikzset{every picture/.style={line width=1pt}} 
\begin{tikzpicture}[x=0.75pt,y=0.75pt,yscale=-1,xscale=1,line join=round]

\draw   (493.85,202.65) .. controls (472.76,202.66) and (455.64,166.64) .. (455.62,122.19) .. controls (455.6,77.75) and (472.68,41.71) .. (493.77,41.7) .. controls (514.86,41.69) and (531.98,77.71) .. (532,122.15) .. controls (532.02,166.6) and (514.94,202.64) .. (493.85,202.65) -- cycle ;

\draw   [fill=uuuuuu, fill opacity=0.5]   (132,80.04) .. controls (155,92.04) and (170,109.04) .. (137,128.04)  .. controls (166,123.04) and (157.73,158.58) .. (142,181.04) .. controls (181,140.04) and (213,108.04) .. (257,97.04) .. controls (227,99.04) and (170,88.04) .. (132,80.04) -- cycle;

\draw   [fill=uuuuuu, fill opacity=0.5]   (132,80.04) .. controls (155,92.04) and (170,109.04) .. (137,128.04)  .. controls (166,123.04) and (157.73,158.58) .. (142,181.04) .. controls (179,157.04) and (213,154.04) .. (258.37,151.73) .. controls (222,153.04) and (171,97.04) .. (132,80.04) -- cycle;

\draw   [fill=uuuuuu, fill opacity=0.5]   (132,80.04) .. controls (155,92.04) and (170,109.04) .. (137,128.04)  .. controls (166,123.04) and (157.73,158.58) .. (142,181.04) .. controls (184.73,121.58) and (230,62.04) .. (257,45.04)  .. controls (215,62.04) and (191,76.04)  ..  (132,80.04) -- cycle;

\draw   [fill=uuuuuu, fill opacity=0.5]   (132,80.04) .. controls (155,92.04) and (170,109.04) .. (137,128.04)  .. controls (166,123.04) and (157.73,158.58) .. (142,181.04)  .. controls (204,172) and (223,186) .. (265,205.04) .. controls (224,186) and (187,108) .. (132,80.04) --cycle;

\draw  [fill=uuuuuu, fill opacity=0.5]   (257,45.04) .. controls (275.28,54.57) and (269.91,97.04) .. (257,97.04) .. controls (275.28,106.57) and (271.27,151.73) .. (258.37,151.73) .. controls (276.65,161.26) and (277.91,205.04) .. (265,205.04) .. controls (297.26,192.8) and (282.81,54.57) .. (257,45.04) --cycle;
\draw   (388.85,204.65) .. controls (367.76,204.66) and (350.64,168.64) .. (350.62,124.19) .. controls (350.6,79.75) and (367.68,43.71) .. (388.77,43.7) .. controls (409.86,43.69) and (426.98,79.71) .. (427,124.15) .. controls (427.02,168.6) and (409.94,204.64) .. (388.85,204.65) -- cycle ;

\draw  [fill=uuuuuu, fill opacity=0.5]  (393.85,73.14) .. controls (410.42,79.69) and (421.22,88.97) .. (397.45,99.33) .. controls (418.34,96.6) and (412.32,106.74) .. (401,119) .. controls (429.09,96.63) and (465.31,96) .. (497,90) .. controls (475.4,91.09) and (421.22,77.51) .. (393.85,73.14) -- cycle;
 
\draw  [fill=uuuuuu, fill opacity=0.5]   (487,182) .. controls (468.36,175.05) and (465.1,169.04) .. (491.43,157.45)  .. controls (468.16,160.67) and (477.6,145.51) .. (490,132)  .. controls (460.51,146.61) and (428.47,159.24) .. (392,161) .. controls (408.94,160.37) and (435,165) .. (448,168) .. controls (461,171)  and (473.72,177.88).. (487,182) -- cycle;
\draw [fill=uuuuuu]  (132,80.04) circle (2pt);
\draw [fill=uuuuuu] (137,128.04) circle (2pt);
\draw [fill=uuuuuu] (142,181.04) circle (2pt);
\draw [fill=uuuuuu] (265,205.04) circle (2pt);
\draw [fill=uuuuuu] (257,45.04) circle (2pt);
\draw [fill=uuuuuu] (258.37,151.73) circle (2pt);
\draw [fill=uuuuuu] (257,97.04) circle (2pt);

\draw [fill=uuuuuu]  (393.85,73.14) circle (1.2pt);
\draw [fill=uuuuuu] (397.45,99.33) circle (1.2pt);
\draw [fill=uuuuuu] (401,119) circle (1.2pt);
\draw [fill=uuuuuu] (497,90) circle (1.2pt);
\draw [fill=uuuuuu] (487,182) circle (1.2pt);
\draw [fill=uuuuuu]  (491.43,157.45) circle (1.2pt);
\draw [fill=uuuuuu] (490,132) circle (1.2pt);
\draw [fill=uuuuuu] (392,161) circle (1.2pt);
\draw (378,214) node [anchor=north west][inner sep=0.75pt]   [align=left] {$V_1$};
\draw (485,214) node [anchor=north west][inner sep=0.75pt]   [align=left] {$V_2$};
\draw (115,76) node [anchor=north west][inner sep=0.75pt]   [align=left] {$1$};
\draw (119,119) node [anchor=north west][inner sep=0.75pt]   [align=left] {$2$};
\draw (125,174) node [anchor=north west][inner sep=0.75pt]   [align=left] {$3$};
\draw (250,52) node [anchor=north west][inner sep=0.75pt]   [align=left] {$4$};
\draw (250,102) node [anchor=north west][inner sep=0.75pt]   [align=left] {$5$};
\draw (250,160) node [anchor=north west][inner sep=0.75pt]   [align=left] {$6$};
\draw (255,208) node [anchor=north west][inner sep=0.75pt]   [align=left] {$7$};

\end{tikzpicture}

\caption{The $4$-graph $\mathbb{F}_{4,3}$ and the $4$-graph $B_{4}^{\mathrm{odd}}(n)$.}
\label{fig:F43}
\end{figure}

F\"{u}redi, Mubayi, and Pikhurko~\cite{FMP084Book4page} proved that $\mathrm{EX}(n, \mathbb{F}_{4,3}) \subset \{B_{4}^{\mathrm{odd}}(n,m) \colon m \in [0,n/2]\}$ for large $n$, and moreover,  $\mathbb{F}_{4,3}$ is edge-stable with respect to $B_4^{\mathrm{odd}}$. 
They also showed that edge-stable cannot be replaced by degree-stable (or vertex-extendable). 
However, from Lemma~3.1 in~\cite{FMP084Book4page} one can easily obtain that $\mathbb{F}_{4,3}$ is weakly edge-stable with respect to $B_4^{\mathrm{odd}}$. 
Hence, we obtain the following theorem. 

\begin{theorem}\label{THM:Hypergraph-Book-4-4}
    There exist constants $N_0$ and $c > 0$ such that for all integers $n \ge N_0$ and $t\in [0, c n]$, we have 
    \begin{align*}
        \mathrm{EX}\left(n, (t+1)\mathbb{F}_{4,3}\right) \subset K_{t}^{4} \uproduct \left\{B_{4}^{\mathrm{odd}}(n-t,m) \colon m \in [0, \sqrt{4(n-t)}]\right\}. 
    \end{align*}
\end{theorem}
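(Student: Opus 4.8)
The plan is to check that $F=\mathbb{F}_{4,3}$, paired with the pattern $P=B_{4}^{\mathrm{odd}}=\left(2,\{\{1,2,2,2\},\{1,1,1,2\}\}\right)$, satisfies the two hypotheses of Theorem~\ref{THM:Main-simple} --- boundedness~(a) and smoothness~(b) --- and then to combine that theorem with the known description of the extremal $\mathbb{F}_{4,3}$-free $4$-graphs. The structural inputs are already on record: by F\"uredi, Mubayi, and Pikhurko~\cite{FMP084Book4page} one has $\mathrm{EX}(n,\mathbb{F}_{4,3})\subset\{B_{4}^{\mathrm{odd}}(n,m)\colon m\in[0,n/2]\}$ for large $n$, and every $B_{4}^{\mathrm{odd}}$-construction is $\mathbb{F}_{4,3}$-free (this is the lower-bound construction), so $(\mathbb{F}_{4,3},B_{4}^{\mathrm{odd}})$ is a Tur\'an pair. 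Moreover $P$ is minimal: it has only two parts, and deleting either one removes every $4$-multiset in $E$, so $\lambda(P-i)=0<\tfrac12=\lambda(P)$ for $i\in\{1,2\}$. With this, Theorem~\ref{THM:Turan-pair-smooth} gives that $\mathrm{ex}(n,\mathbb{F}_{4,3})$ is $4\binom{n-1}{2}$-smooth, and since $4\binom{n-1}{2}=O(n^{2})$ lies below $\frac{1-\pi(F)}{8\,v(F)}\binom{n}{3}$ for large $n$ (here $r=4$, $v(F)=7$, $\pi(F)=1/2$), hypothesis~(b) holds.

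For hypothesis~(a) I would apply Theorem~\ref{THM:vertex-extend-bounded}, which needs $\mathbb{F}_{4,3}$ to be edge-stable and weakly vertex-extendable with respect to $B_{4}^{\mathrm{odd}}$. Edge-stability is proved in~\cite{FMP084Book4page}. The weak vertex-extendability is the one item that is not a direct quotation: I would read it off Lemma~3.1 of~\cite{FMP084Book4page}, which controls the link of a single vertex in a near-extremal $\mathbb{F}_{4,3}$-free $4$-graph; reformulated, that lemma says exactly that if $\delta(\mathcal{H})\ge(\tfrac12-\zeta)\binom{n-1}{3}$ and $\mathcal{H}-v$ is a $B_{4}^{\mathrm{odd}}$-subconstruction for some $v$, then $d_{\mathcal{H}}(v)\le(\tfrac12+\delta)\binom{n-1}{3}$. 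One cannot do better: as observed in~\cite{FMP084Book4page}, $\mathbb{F}_{4,3}$ is not degree-stable (a local perturbation of $B_{4}^{\mathrm{odd}}(n,0)$, of the same flavour as the one noted for the expanded triangle $\mathcal{C}^{2r}_{3}$, witnesses this), which is precisely why Theorem~\ref{THM:vertex-extend-bounded} is phrased to accept the weak notion. That theorem then yields a constant $c>0$ for which $\mathbb{F}_{4,3}$ is $\left(c\binom{n-1}{3},\,\frac{1-\pi(F)}{8\,v(F)}\binom{n-1}{3}\right)$-bounded, and using $\binom{n-1}{3}\le\binom{n}{3}$ (and shrinking $c$ if needed) this upgrades to the boundedness demanded in~(a).

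With~(a) and~(b) verified, Theorem~\ref{THM:Main-simple} provides $N_{0}$ and $c_{0}>0$ such that for all $n\ge N_{0}$ and $t\le c_{0}n$,
\[
    \mathrm{EX}\!\left(n,(t+1)\mathbb{F}_{4,3}\right)=K_{t}^{4}\uproduct\mathrm{EX}(n-t,\mathbb{F}_{4,3}).
\]
Feeding in the F\"uredi--Mubayi--Pikhurko description of $\mathrm{EX}(n-t,\mathbb{F}_{4,3})$ gives the claimed inclusion, provided the parameter $m$ is cut down from $[0,(n-t)/2]$ to $[0,\sqrt{4(n-t)}]$. For that I would note that every member of $\mathrm{EX}(N,\mathbb{F}_{4,3})$ attains $\mathrm{ex}(N,\mathbb{F}_{4,3})=\max_{m}|B_{4}^{\mathrm{odd}}(N,m)|$, and that this maximum is attained only for $m<\sqrt{4N}$; this last fact is the elementary optimization of $|B_{4}^{\mathrm{odd}}(N,m)|$ in $m$ carried out in~\cite{KS05a} (the case $2r=4$), the same estimate already used for $\mathcal{C}^{2r}_{3}$ in Theorem~\ref{THM:expanded-triangle}. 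Applying it with $N=n-t$, setting $c:=c_{0}$, and enlarging $N_{0}$ if necessary would finish the argument.

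The step I expect to be the main obstacle is obtaining the weak vertex-extendability of $\mathbb{F}_{4,3}$: since this $4$-graph is genuinely not vertex-extendable, one cannot just quote a degree-stability theorem and invoke Corollary~\ref{CORO:degree-stable-smooth-bounded}, and the needed control on a single vertex's link has to be dug out of the internals of~\cite{FMP084Book4page} and matched against the definition. Everything else --- minimality of $B_{4}^{\mathrm{odd}}$, the bookkeeping between $\binom{n-1}{3}$- and $\binom{n}{3}$-scaled error functions, and the trimming of the parameter $m$ --- is routine.
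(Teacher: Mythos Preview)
Your proposal is correct and follows essentially the same route as the paper: verify that $(\mathbb{F}_{4,3},B_{4}^{\mathrm{odd}})$ is a Tur\'an pair with a minimal pattern, invoke Theorem~\ref{THM:Turan-pair-smooth} for smoothness, extract weak vertex-extendability from Lemma~3.1 of~\cite{FMP084Book4page} together with edge-stability to feed into Theorem~\ref{THM:vertex-extend-bounded} for boundedness, apply Theorem~\ref{THM:Main-simple}, and then trim the range of $m$ via the optimization $m<\sqrt{2rN}$ from~\cite{KS05a} with $2r=4$. This is exactly the scheme the paper sketches in the paragraph preceding the theorem.
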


Let $\mathbb{F}_{3,2}$ denote the $3$-graph with vertex set $\{1,2,3,4,5\}$ and edge set
\begin{align*}
    \{123,124,125,345\}. 
\end{align*}
Recall that $S_{3}(n)$ is the semibipartite $3$-graph on $n$ vertices with the maximum number of edges, i.e. the maximum $S_3:=\left(2, \{1,2,2\}\right)$-construction on $n$ vertices.

\begin{figure}[htbp]
\centering

\tikzset{every picture/.style={line width=1pt}} 

\begin{tikzpicture}[x=0.75pt,y=0.75pt,yscale=-1,xscale=1,line join=round]

\draw   (394.02,181.74) .. controls (377.73,181.66) and (364.64,159.48) .. (364.78,132.2) .. controls (364.91,104.92) and (378.23,82.87) .. (394.52,82.96) .. controls (410.81,83.04) and (423.9,105.22) .. (423.77,132.5) .. controls (423.63,159.78) and (410.31,181.83) .. (394.02,181.74) -- cycle ;
\draw   (505.85,213.61) .. controls (484.76,213.62) and (467.64,177.6) .. (467.62,133.15) .. controls (467.6,88.7) and (484.68,52.66) .. (505.77,52.65) .. controls (526.86,52.64) and (543.98,88.66) .. (544,133.11) .. controls (544.02,177.56) and (526.94,213.6) .. (505.85,213.61) -- cycle ;
\draw   [fill=uuuuuu, fill opacity=0.5]  (386.08,133.02) .. controls (428.2,130.8) and (481.4,129.68) .. (501.35,110.8)  .. controls (482.5,124.13) and (486.94,143.01) .. (499.13,155.23) .. controls (480.29,135.24) and (431.52,137.46) .. (386.08,133.02) --cycle;
\draw   [fill=uuuuuu, fill opacity=0.5]  (158.9,88.86) .. controls (172.88,105.19) and (175.03,132.41) .. (164.27,151.46) .. controls (196.54,109.27) and (220.2,90.22) .. (261.07,63) .. controls (230.95,79.33)  and (195.46,84.78).. (158.9,88.86) -- cycle;

\draw   [fill=uuuuuu, fill opacity=0.5]  (158.9,88.86) .. controls (172.88,105.19) and (175.03,132.41) .. (164.27,151.46) .. controls (197.61,128.33) and (223.43,129.69) .. (265.37,129.69)  .. controls (232.03,126.97) and (194.39,103.83) .. (158.9,88.86) -- cycle;

\draw   [fill=uuuuuu, fill opacity=0.5]  (158.9,88.86) .. controls (172.88,105.19) and (175.03,132.41) .. (164.27,151.46) .. controls (203.64,153.94) and (235.56,179.62) .. (266.44,205.9) .. controls  (252.03,182.83) and (188.2,113.28) .. (158.9,88.86) -- cycle;
\draw  [fill={rgb:red,1;green,0;blue,0}, fill opacity=0.5]  (261.07,63) .. controls (279.35,72.53) and (278.27,129.69) .. (265.37,129.69) .. controls (283.65,139.22) and (279.35,205.9) .. (266.44,205.9) .. controls (298.71,193.65) and (286.88,72.53) .. (261.07,63) -- cycle;
\draw [fill=uuuuuu] (261.07,63) circle (2pt);
\draw [fill=uuuuuu] (265.37,129.69) circle (2pt);
\draw [fill=uuuuuu]  (266.44,205.9) circle (2pt);
\draw [fill=uuuuuu] (158.9,88.86) circle (2pt);
\draw [fill=uuuuuu] (164.27,151.46) circle (2pt);
\draw [fill=uuuuuu] (386.08,133.02) circle (1.2pt);
\draw [fill=uuuuuu] (501.35,110.8) circle (1.2pt);
\draw [fill=uuuuuu] (499.13,155.23)  circle (1.2pt);
\draw (387.16,191.96) node [anchor=north west][inner sep=0.75pt]   [align=left] {$V_1$};
\draw (495.77,220.84) node [anchor=north west][inner sep=0.75pt]   [align=left] {$V_2$};
\draw (145,80) node [anchor=north west][inner sep=0.75pt]   [align=left] {$1$};
\draw (145,144) node [anchor=north west][inner sep=0.75pt]   [align=left] {$2$};
\draw (256,68.79) node [anchor=north west][inner sep=0.75pt]   [align=left] {$3$};
\draw (256,135.48) node [anchor=north west][inner sep=0.75pt]   [align=left] {$4$};
\draw (256,211.69) node [anchor=north west][inner sep=0.75pt]   [align=left] {$5$};
\end{tikzpicture}
\caption{The $3$-graph $\mathbb{F}_{3,2}$ and the semibipartite $3$-graph $S_3(n)$.}
\label{fig:F32}
\end{figure}
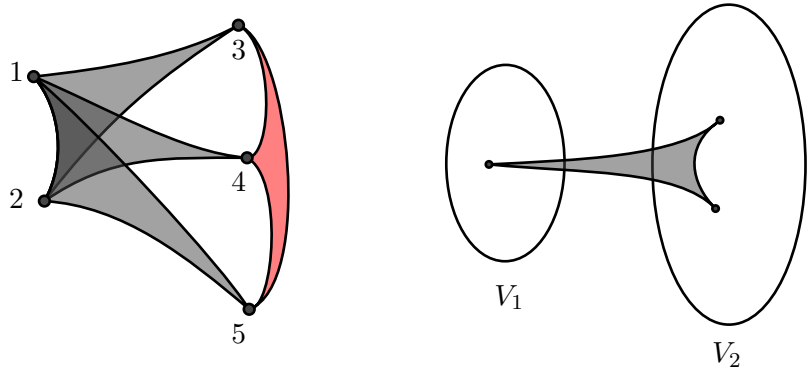

F\"{u}redi, Pikhurko, and Simonovits~\cite{FPS053Book3page} proved that $\mathrm{EX}(n, \mathbb{F}_{3,2}) = \{S_3(n)\}$ for all sufficiently large $n$. 
A construction in their paper ({\cite[Construction~1.2]{FPS053Book3page}}) shows that $\mathbb{F}_{3,2}$ is not vertex-extendable with respect $S_3$. 
But we will present a short proof in Section~\ref{SEC:proof-3book} which shows that $\mathbb{F}_{3,2}$ is weakly vertex-extendable with respect to $S_3$. 
Hence, we obtain the following result. 

\begin{theorem}\label{THM:Hypergraph-Book-3}
    There exist constants $N_0$ and $c > 0$ such that for all integers $n \ge N_0$ and $t\in [0, c n]$, we have 
    \begin{align*}
        \mathrm{EX}\left(n, (t+1)\mathbb{F}_{3,2}\right) = \left\{K_{t}^{r} \uproduct S_{3}(n-t)\right\}. 
    \end{align*}
\end{theorem}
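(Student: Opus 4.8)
The statement is a direct application of Theorem~\ref{THM:Main-simple}: the task reduces to verifying, for $F = \mathbb{F}_{3,2}$ (so $r = 3$, $m = 5$), its two hypotheses (a) and (b). Since $\mathbb{F}_{3,2}$ is \emph{not} degree-stable with respect to $S_3$, we cannot invoke Corollary~\ref{CORO:degree-stable-smooth-bounded}; instead we feed into Theorems~\ref{THM:Turan-pair-smooth} and~\ref{THM:vertex-extend-bounded} three ingredients. First, $(\mathbb{F}_{3,2}, S_3)$ is a Tur\'an pair and $S_3 = (2,\{1,2,2\})$ is a minimal pattern: one checks that $\lambda(S_3 - i) = 0 < 4/9 = \lambda(S_3)$ for $i \in \{1,2\}$, and that a semibipartite $3$-graph contains no $\mathbb{F}_{3,2}$, which together with $\mathrm{EX}(n,\mathbb{F}_{3,2}) = \{S_3(n)\}$ (F\"uredi, Pikhurko, and Simonovits~\cite{FPS053Book3page}) gives the Tur\'an pair property, so $\pi(\mathbb{F}_{3,2}) = \lambda(S_3) = 4/9$. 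Second, $\mathbb{F}_{3,2}$ is edge-stable with respect to $S_3$, again by~\cite{FPS053Book3page}. Third --- the only genuinely new ingredient --- $\mathbb{F}_{3,2}$ is weakly vertex-extendable with respect to $S_3$ (Section~\ref{SEC:proof-3book}); I sketch the argument at the end of this plan.

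Granting these, hypothesis (b) follows from Theorem~\ref{THM:Turan-pair-smooth}: it gives that $\mathrm{ex}(n,\mathbb{F}_{3,2})$ is $4\binom{n-1}{r-2} = 4(n-1)$-smooth, and $4(n-1) \le \tfrac{1-\pi(\mathbb{F}_{3,2})}{8m}\binom{n}{r-1} = \tfrac{1}{72}\binom{n}{2}$ for large $n$. For hypothesis (a), Theorem~\ref{THM:vertex-extend-bounded} (using edge-stability together with weak vertex-extendability) produces a constant $c' > 0$ for which $\mathbb{F}_{3,2}$ is $\bigl(c'\binom{n-1}{2}, \tfrac{1}{72}\binom{n-1}{2}\bigr)$-bounded for large $n$. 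Boundedness is monotone --- shrinking the first function and enlarging the second preserves it --- so with $\binom{n-1}{2} \le \binom{n}{2}$, $\tfrac{1}{72} \le \tfrac{1}{36}$, and $c := c'/2$ (which ensures $c\binom{n}{2} \le c'\binom{n-1}{2}$ for large $n$), we conclude that $\mathbb{F}_{3,2}$ is $\bigl(c\binom{n}{r-1}, \tfrac{1-\pi(\mathbb{F}_{3,2})}{4m}\binom{n}{r-1}\bigr)$-bounded, which is exactly (a). Theorem~\ref{THM:Main-simple} then gives $\mathrm{EX}(n,(t+1)\mathbb{F}_{3,2}) = K_t^3 \uproduct \mathrm{EX}(n-t,\mathbb{F}_{3,2})$ for large $n$ and all $t$ in some range $[0, c_{\mathbb{F}_{3,2}} n]$; since then $n - t$ is large, $\mathrm{EX}(n-t,\mathbb{F}_{3,2}) = \{S_3(n-t)\}$, which yields the claimed identity.

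The heart of the matter, and what I expect to be the main obstacle, is the third ingredient: weak vertex-extendability of $\mathbb{F}_{3,2}$ with respect to $S_3$. One cannot upgrade it to full vertex-extendability, since Construction~1.2 of~\cite{FPS053Book3page} gives an $\mathbb{F}_{3,2}$-free $\mathcal{H}$ with $\mathcal{H} - v$ an $S_3$-subconstruction but $\mathcal{H}$ itself not one; so instead of controlling the structure of $\mathcal{H}$ one must bound the degree of the offending vertex by $\bigl(\pi(\mathbb{F}_{3,2}) + \delta\bigr)\binom{n-1}{2}$ directly. The plan is local and short: write $V(\mathcal{H}) \setminus \{v\} = V_1 \cup V_2$ with $\mathcal{H} - v$ inside the $S_3$-pattern (one vertex in $V_1$ per edge), consider the link graph $L_v$ of $v$ on $V_1 \cup V_2$, and observe that if $d_{\mathcal{H}}(v) = |L_v|$ were much above $\tfrac49\binom{n-1}{2}$ then $L_v$ would contain a vertex $b$ with three neighbours $c,d,e$ spanning an edge $\{c,d,e\}$ of $\mathcal{H} - v$; then $\{v,b\}$ (spine), $c,d,e$ (pages), and $cde$ (back edge) form a copy of $\mathbb{F}_{3,2}$, a contradiction.

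The delicate part is making this counting tight right at the extremal density $\tfrac49$, which is where the hypothesis $\delta(\mathcal{H}) \ge (\pi(\mathbb{F}_{3,2}) - \zeta)\binom{n-1}{2}$ in the definition of weak vertex-extendability must be used: the minimum-degree condition forces both $L_v$ and the already-semibipartite $\mathcal{H}-v$ to be dense across the relevant parts, which is precisely what rules out a degenerate large-degree link avoiding $\mathbb{F}_{3,2}$. Once that local estimate is in hand, everything else in the proof of Theorem~\ref{THM:Hypergraph-Book-3} is the bookkeeping indicated above.
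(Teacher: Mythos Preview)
Your proposal is correct and follows the paper's approach exactly: reduce to Theorem~\ref{THM:Main-simple} via Theorems~\ref{THM:Turan-pair-smooth} and~\ref{THM:vertex-extend-bounded}, citing~\cite{FPS053Book3page} for the Tur\'an pair and edge-stability, and proving weak vertex-extendability as the single new ingredient. The only place your outline is looser than the paper is the sketch of that last step: the one-line ``find $b$ with three $L_v$-neighbours $c,d,e$ forming an edge of $\mathcal{H}-v$'' does not by itself control where those neighbours sit relative to the bipartition $V_1\cup V_2$ (which matters, since edges of $\mathcal{H}-v$ must have profile $(1,2)$); the paper's argument handles this by splitting $L_v=L_1\cup L_2\cup L_{1,2}$, first forcing $|L_2|\ge\frac{\delta}{8}n^2$, then $L_1=\emptyset$, and only then extracting the copy of $\mathbb{F}_{3,2}$---exactly the ``delicate part'' you flagged.
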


\section{Proofs of Theorems~\ref{THM:Main-simple} and~\ref{THM:main-rainbow}}\label{SEC:Proof-Main}
In this section, we prove Theorems~\ref{THM:Main-simple} and~\ref{THM:main-rainbow}.
In fact, we will prove the following more general (but also more technical) version. 

\begin{theorem}\label{THEOREM:main-techincal}
Let $m \ge r \ge 2$ be integers and $F$ be a nondegenerate $r$-graph on $m$ vertices. 
Let $f \colon \mathbb{N}\to \mathbb{R}$ be a nondecreasing function. 
Suppose that for all sufficiently large $n\in \mathbb{N} \colon$ 
\begin{enumerate}[label=(\alph*)]
\item\label{lab:smooth} 
   $\mathrm{ex}(n, F)$ is $\frac{1-\pi(F)}{8m}\binom{n}{r-1}$-smooth, and   
\item\label{lab:bounded} 
 $F$ is $\left(f(n), \frac{1-\pi(F)}{4m} \binom{n}{r-1}\right)$-bounded. 
\end{enumerate}
Then there exists $N_0$ such that the following statements hold for all integers $n, t \in \mathbb{N}$ with 
\begin{align*}
    n\ge N_0, \quad t \le \frac{1-\pi(F)}{64rm^2}n, \quad\text{and}\quad
     2emt\binom{n-2mt}{r-2} \le f(n-2mt). 
\end{align*}
\begin{enumerate}[label=(\roman*)]
\item\label{lab:tech-rainbow} 
   If a collection $\{\mathcal{H}_1, \ldots, \mathcal{H}_{t+1}\}$ of $n$-vertex $r$-graphs on the same vertex set satisfies 
\begin{align*}
    |\mathcal{H}_i| > \binom{n}{r} - \binom{n-t}{r} + \mathrm{ex}(n-t, F) \quad\text{for all } i\in [t+1], 
\end{align*}
then $\{\mathcal{H}_1, \ldots, \mathcal{H}_{t+1}\}$ contains a rainbow $F$-matching.   
\item\label{lab:tech-Turan} 
  We have  $\mathrm{EX}(n, (t+1)F) = K_{t}^r\uproduct \mathrm{EX}(n-t,F)$. 
\end{enumerate}

\end{theorem}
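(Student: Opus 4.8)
The plan is to establish \ref{lab:tech-rainbow} first and then derive \ref{lab:tech-Turan} from it. The lower bound $\mathrm{ex}(n,(t+1)F)\ge\binom{n}{r}-\binom{n-t}{r}+\mathrm{ex}(n-t,F)$, needed for the reverse inclusion in \ref{lab:tech-Turan}, is immediate: in $K_t^r\uproduct\mathcal{G}$ with $\mathcal{G}\in\mathrm{EX}(n-t,F)$ every copy of $F$ meets the set of $t$ apex vertices, so there are at most $t$ pairwise disjoint copies.

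For \ref{lab:tech-rainbow} I would induct on $t$; the base case $t=0$ is the definition of $\mathrm{ex}(n,F)$. For the inductive step, suppose for contradiction that $\{\mathcal{H}_1,\dots,\mathcal{H}_{t+1}\}$ has no rainbow $F$-matching and fix a maximum one; after relabelling it is $F_i\subseteq\mathcal{H}_i[S_i]$, $i\in[s]$, with $s\le t$, where $S_1,\dots,S_s$ are disjoint $m$-sets with union $U$, so $|U|=sm$. By maximality, $\mathcal{H}_j[V\setminus U]$ is $F$-free for every $j\in\{s+1,\dots,t+1\}$. Write $T(n,t,F):=\binom{n}{r}-\binom{n-t}{r}+\mathrm{ex}(n-t,F)$ and recall $\binom{n}{r}-\binom{n-t}{r}=\sum_{i=1}^{t}\binom{n-i}{r-1}$. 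Then $\mathrm{ex}(n-sm,F)\ge|\mathcal{H}_{s+1}[V\setminus U]|\ge|\mathcal{H}_{s+1}|-\bigl(\binom{n}{r}-\binom{n-sm}{r}\bigr)>T(n,t,F)-\binom{n}{r}+\binom{n-sm}{r}$, which rearranges to $\mathrm{ex}(n-sm,F)-\mathrm{ex}(n-t,F)>\binom{n-sm}{r}-\binom{n-t}{r}$; since $\frac{1-\pi(F)}{8m}$-smoothness together with $\pi(F)<1$ makes $\binom{i-1}{r-1}-\delta(i,F)$ strictly positive for all large $i$, this can only hold when $sm>t$.

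The heart of the proof is to upgrade this to $|\mathcal{H}_{s+1}|\le T(n,t,F)$, a contradiction. The naive bound $|\mathcal{H}_{s+1}|\le\mathrm{ex}(n-sm,F)+\binom{n}{r}-\binom{n-sm}{r}$ exceeds $T(n,t,F)$ once $sm\ge t$, so one must \emph{compress} the size-$sm$ witness $U$ into a set of size only $t$ that still meets every copy of $F$ in $\mathcal{H}_{s+1}$. Boundedness enters here: the hypotheses $t\le\frac{1-\pi(F)}{64rm^2}n$ and $2emt\binom{n-2mt}{r-2}\le f(n-2mt)$ are arranged precisely so that $\mathcal{H}_{s+1}[V\setminus U]$ — and every graph obtained from it by adding back one vertex $u\in U$ that lies in no copy of $F$ in $\mathcal{H}_{s+1}$ disjoint from $U\setminus\{u\}$ — has average degree at least $d(\cdot,F)-f(\cdot)$, so by $\bigl(f(\cdot),\tfrac{1-\pi(F)}{4m}\binom{n}{r-1}\bigr)$-boundedness it has maximum degree at most $d(\cdot,F)+\tfrac{1-\pi(F)}{4m}\binom{n}{r-1}$. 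Feeding these degree bounds into a decomposition of the edges of $\mathcal{H}_{s+1}$ meeting $U$ by $|e\cap U|$, and using an exchange argument on the maximum rainbow matching — if some copy of $F$ in $\mathcal{H}_{s+1}$ uses a single vertex $u\in S_i$ of $U$, then replacing $F_i$ by that copy (now placed in $\mathcal{H}_{s+1}$) is another maximum rainbow matching in which $\mathcal{H}_i$ is unused — one bounds the number of "expensive" vertices of $U$ and obtains $|\{e\in\mathcal{H}_{s+1}:e\cap U\ne\emptyset\}|\le\mathrm{ex}(n-t,F)-\mathrm{ex}(n-sm,F)+\binom{n}{r}-\binom{n-t}{r}$, which is exactly the estimate needed. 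I expect this compression — in particular, showing the exchange argument limits the troublesome vertices uniformly over all $s$ with $t/m<s\le t$ — to be the main obstacle; the remaining work is bookkeeping with the smoothness and boundedness inequalities.

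For \ref{lab:tech-Turan}, taking $\mathcal{H}_1=\dots=\mathcal{H}_{t+1}$ in \ref{lab:tech-rainbow} yields $\mathrm{ex}(n,(t+1)F)\le T(n,t,F)$, hence equality with the construction above. For the characterization, given a $(t+1)F$-free $\mathcal{H}$ with $|\mathcal{H}|=T(n,t,F)$, the same compression produces a set $R$ of $t$ vertices with $\mathcal{H}[V\setminus R]$ $F$-free; then $|\mathcal{H}|\le\binom{n}{r}-\binom{n-t}{r}+|\mathcal{H}[V\setminus R]|\le\binom{n}{r}-\binom{n-t}{r}+\mathrm{ex}(n-t,F)$ with equality throughout, which forces $\mathcal{H}$ to contain every $r$-set meeting $R$ and $\mathcal{H}[V\setminus R]\in\mathrm{EX}(n-t,F)$, i.e. $\mathcal{H}=K_t^r\uproduct\mathcal{H}[V\setminus R]\in K_t^r\uproduct\mathrm{EX}(n-t,F)$.
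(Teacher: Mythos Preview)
Your overall architecture is reasonable, and the reduction of \ref{lab:tech-Turan} to \ref{lab:tech-rainbow} is correct. However, the ``compression'' step---which you yourself flag as the main obstacle---is a genuine gap, and the exchange argument you sketch does not close it. Swapping the copy in $\mathcal{H}_i$ for a copy in $\mathcal{H}_{s+1}$ through an ``expensive'' vertex $u\in S_i$ produces another maximum rainbow matching of the \emph{same} size $s$ with a different unused index; it does not reduce the number of expensive vertices, nor does iterating it obviously terminate or yield a bound, since each swap changes the witness set $U$. Nothing in your outline explains why the number of vertices $u\in U$ that lie in a copy of $F$ in $\mathcal{H}_{s+1}$ meeting $U$ only in $u$ should be at most $t$ (or any useful bound), and without that the degree decomposition cannot be summed to the target inequality. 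There is also a more technical wrinkle: to invoke boundedness on $\mathcal{H}_{s+1}[(V\setminus U)\cup\{u\}]$ you need its average degree to be at least $d(\cdot,F)-f(\cdot)$, but after deleting $|U|=sm$ vertices the edge count of $\mathcal{H}_{s+1}$ may drop by roughly $sm\binom{n}{r-1}$, which is of larger order than what $f$ absorbs; this requires the finer analysis that the paper carries out in Claim~\ref{CLAIM:F-avoid-Bi}.

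The paper's proof avoids the compression difficulty by reversing the order of operations. Rather than first fixing a maximum rainbow matching and then trying to locate high-degree vertices inside its support, it \emph{first} extracts a maximal ``heavy-rainbow'' set $L=\{v_1,\dots,v_k\}$ of vertices, each of which has degree above $d(n-t,F)+\tfrac{1-\pi(F)}{2m}\binom{n-t}{r-1}$ in its own $\mathcal{H}_i$. By maximality, every remaining $\mathcal{H}_j$ has small maximum degree on $V\setminus L$, and a direct edge count (Lemma~\ref{LEMMA:Case1-small-maxdeg-rainbow}, essentially your $sm>t$ computation carried to its conclusion) yields rainbow copies for those indices. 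Then, for each heavy $v_i$, boundedness is invoked on $\mathcal{H}_i$ restricted to $V$ minus a small forbidden set (Claim~\ref{CLAIM:F-avoid-Bi}): since $v_i$ still has degree exceeding $d(\cdot,F)+\tfrac{1-\pi(F)}{4m}\binom{\cdot}{r-1}$ after deletion, the restricted graph cannot be $F$-free, producing the remaining copies greedily. The same heavy-vertex set $L$ then plays the role of your set $R$ in part~\ref{lab:tech-Turan}: one shows $|L|\le t$ directly, and if $|L|<t$ the bounded-max-degree lemma already gives $|\mathcal{H}|<T(n,t,F)$, forcing $|L|=t$ and the exact structure. No exchange or compression is needed.
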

\subsection{Preparations}\label{SUBSEC:Proof-Prep}
First, recall the following result due to Katona, Nemetz, and Simonovits~\cite{KNS64}

\begin{proposition}[Katona--Nemetz--Simonovits~\cite{KNS64}]\label{PROP:KNS}
Fix an $r$-graph $F$. 
The ration $\frac{\mathrm{ex}(n,F)}{\binom{n}{r}}$ is nonincreasing in $n$. 
In particular, $\mathrm{ex}(n,F) \ge \pi(F)\binom{n}{r}$ for all $n\in \mathbb{N}$, and
\begin{align*}
    \pi(F) 
    \le \frac{\mathrm{ex}(v(F),F)}{\binom{v(F)}{r}} 
    \le \frac{\binom{v(F)}{r}-1}{\binom{v(F)}{r}}<1.
\end{align*}
\end{proposition}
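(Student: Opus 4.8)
The plan is to carry out the standard averaging argument of Katona, Nemetz, and Simonovits. First I would fix an integer $n > r$ and take an $F$-free $r$-graph $\mathcal{H}$ on $n$ vertices with $|\mathcal{H}| = \mathrm{ex}(n, F)$. For each vertex $v \in V(\mathcal{H})$, the induced subgraph $\mathcal{H} - v$ is an $F$-free $r$-graph on $n - 1$ vertices, hence $|\mathcal{H} - v| \le \mathrm{ex}(n - 1, F)$. I would then double count incidences between the edges of $\mathcal{H}$ and the vertices avoiding them: every edge of $\mathcal{H}$ survives in exactly $n - r$ of the subgraphs $\mathcal{H} - v$, so
\[
    (n - r)\,\mathrm{ex}(n, F) \;=\; \sum_{v \in V(\mathcal{H})} |\mathcal{H} - v| \;\le\; n\,\mathrm{ex}(n - 1, F).
\]
Using $\binom{n}{r} \big/ \binom{n-1}{r} = n / (n - r)$ and dividing, this rearranges to $\mathrm{ex}(n, F)/\binom{n}{r} \le \mathrm{ex}(n - 1, F)/\binom{n - 1}{r}$, which is exactly the claimed monotonicity.

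Next, since the sequence $\left( \mathrm{ex}(n, F)/\binom{n}{r} \right)_{n \ge r}$ is nonincreasing and bounded below by $0$, it converges, and its limit is precisely $\pi(F)$ by definition; being nonincreasing, every term is at least the limit, which yields $\mathrm{ex}(n, F) \ge \pi(F)\binom{n}{r}$ for all $n$. Specializing the monotonicity down to $n = v(F)$ then gives $\pi(F) \le \mathrm{ex}(v(F), F)/\binom{v(F)}{r}$.

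For the last inequality I would observe that the complete $r$-graph on $v(F)$ vertices contains $F$ as a subgraph — all $\binom{v(F)}{r}$ possible $r$-edges are present, so any bijection from $V(F)$ onto that vertex set embeds $F$ — hence every $F$-free $r$-graph on $v(F)$ vertices must omit at least one edge, giving $\mathrm{ex}(v(F), F) \le \binom{v(F)}{r} - 1$. Dividing by $\binom{v(F)}{r}$ completes the chain, with $\frac{\binom{v(F)}{r} - 1}{\binom{v(F)}{r}} < 1$ immediate. I do not anticipate a genuine obstacle here: the only point needing a moment's care is the incidence count — verifying that each $r$-set lies in exactly $n - r$ of the $(n-1)$-subsets of an $n$-set, equivalently that the ratio $\binom{n}{r}/\binom{n-1}{r}$ equals $n/(n-r)$ — together with the mild observation that $\mathcal{H}$ may be taken extremal for the left-hand side while only the trivial bound $\mathrm{ex}(n-1,F)$ is used on each induced piece.
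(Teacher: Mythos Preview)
Your argument is correct and is precisely the classical averaging argument. Note, however, that the paper does not actually supply a proof of this proposition: it is stated with attribution to Katona--Nemetz--Simonovits and the introduction refers to it only as ``a simple averaging argument'' with a citation to~\cite{KNS64}. What you have written is that argument in full, so there is nothing to compare against beyond confirming that your approach matches the one the paper alludes to.
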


Next, we prove two simple inequalities concerning binomials. 

\begin{lemma}\label{LEMMA:Binomal-Inequ}
    Suppose that $m \le n/r -1$. Then 
    \begin{align}\label{equ:binomal-ineq}
    \binom{n}{r}-\binom{n-m}{r}
    =\sum_{i=1}^{r}\binom{m}{i}\binom{n-m}{r-i}
    \le 2m\binom{n-m}{r-1}.
    \end{align}
\end{lemma}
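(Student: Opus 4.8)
The plan is to first turn the difference $\binom{n}{r}-\binom{n-m}{r}$ into the stated sum via the Vandermonde convolution, and then to bound that sum by showing its terms decay geometrically.

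For the identity, I would apply $\binom{a+b}{r}=\sum_{i=0}^{r}\binom{a}{i}\binom{b}{r-i}$ with $a=m$ and $b=n-m$ (using the convention $\binom{x}{y}=0$ when $y>x$); isolating the $i=0$ summand $\binom{m}{0}\binom{n-m}{r}=\binom{n-m}{r}$ gives $\binom{n}{r}-\binom{n-m}{r}=\sum_{i=1}^{r}\binom{m}{i}\binom{n-m}{r-i}$, which is the middle expression in~(\ref{equ:binomal-ineq}).

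For the inequality I would set $a_i:=\binom{m}{i}\binom{n-m}{r-i}$ and compute the ratio of consecutive terms,
\begin{align*}
    \frac{a_{i+1}}{a_i}=\frac{m-i}{i+1}\cdot\frac{r-i}{n-m-r+i+1}\qquad(1\le i\le r-1).
\end{align*}
The hypothesis $m\le n/r-1$ is equivalent to $n\ge r(m+1)$, whence $n-m\ge(r-1)m+r$, so the denominator above is at least $(r-1)m+1$ for $i\ge 1$; combined with $m-i<m$ and $r-i\le r-1$ this gives $a_{i+1}/a_i<\frac{m}{i+1}\cdot\frac{r-1}{(r-1)m}=\frac{1}{i+1}\le\frac12$. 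Hence $a_i\le 2^{-(i-1)}a_1$ for every $i\ge 1$, and summing the geometric series yields $\sum_{i=1}^{r}a_i<2a_1=2m\binom{n-m}{r-1}$, which is the claimed bound. (When $m=0$ the statement is trivial, as both sides vanish.)

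There is essentially no obstacle here; the only thing that needs a little attention is tracking the degenerate cases (for instance summands with $i+1>m$, which vanish) and making sure the hypothesis $m\le n/r-1$ is used precisely where it is needed, namely to push the denominator $n-m-r+i+1$ above $(r-1)m$ so that the ratio falls below $1/2$.
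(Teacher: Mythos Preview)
Your proposal is correct and follows essentially the same approach as the paper: both compute the ratio of consecutive summands $\binom{m}{i}\binom{n-m}{r-i}$, use the hypothesis $m\le n/r-1$ to bound this ratio by $\tfrac{1}{2}$, and then sum the resulting geometric series to obtain the factor~$2$. The only cosmetic differences are that you spell out the Vandermonde identity for the equality and route the ratio bound through $n-m-r+i+1\ge (r-1)m+1$ rather than the paper's equivalent manipulation.
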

\begin{proof}
    For every $i\in [2,r]$ we have 
    \begin{align*}
        \frac{\binom{m}{i} \binom{n-m}{r-i}}{\binom{m}{i-1}\binom{n-m}{r-i+1}} 
        = \frac{m-i+1}{i}\frac{r-i+1}{n-m-r+i}
        \le \frac{(r-1)m}{2(n-m-r)}
        \le \frac{1}{2}, 
    \end{align*}
    where the last inequality follows from the assumption that $m \le n/r -1$.
    Therefore, 
    \begin{align*}
        \sum_{i=1}^{r}\binom{m}{i}\binom{n-m}{r-i}
         \le \sum_{i=1}^{r}\left(\frac{1}{2}\right)^{i-1} m \binom{n-m}{r-1}
         \le 2 m \binom{n-m}{r-1}. 
    \end{align*}
\end{proof}

\begin{lemma}\label{LEMMA:binom-inequ-b}
Suppose that integers $n,b,r \ge 1$ satisfy $b \le \frac{n-r}{r+1}$. Then 
    \begin{align*}
        \binom{n}{r} \le e \binom{n-b}{r}. 
    \end{align*}
\end{lemma}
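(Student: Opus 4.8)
\textbf{Proof plan for Lemma~\ref{LEMMA:binom-inequ-b}.}

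The plan is to estimate the ratio $\binom{n}{r}/\binom{n-b}{r}$ directly as a product of $r$ simple fractions and show it is at most $e$. Writing
\begin{align*}
    \frac{\binom{n}{r}}{\binom{n-b}{r}} = \prod_{i=0}^{r-1}\frac{n-i}{n-b-i},
\end{align*}
I would bound each factor by observing that $\frac{n-i}{n-b-i} = 1 + \frac{b}{n-b-i} \le 1 + \frac{b}{n-b-r+1}$ for every $i\in\{0,\dots,r-1\}$, since the denominator is smallest when $i=r-1$. The hypothesis $b \le \frac{n-r}{r+1}$ is exactly what makes $n-b-r+1$ comfortably positive and, more importantly, forces $\frac{b}{n-b-r+1} \le \frac{1}{r}$: indeed $b(r+1) \le n-r$ rearranges to $br \le n - r - b = (n-b-r+1) - 1 < n-b-r+1$, so $\frac{b}{n-b-r+1} < \frac{1}{r}$.

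With that single inequality in hand, each of the $r$ factors is at most $1+\frac{1}{r}$, so
\begin{align*}
    \frac{\binom{n}{r}}{\binom{n-b}{r}} \le \left(1+\frac{1}{r}\right)^{r} < e,
\end{align*}
using the standard fact that $(1+1/r)^r$ is increasing in $r$ with limit $e$. Rearranging gives $\binom{n}{r} \le e\binom{n-b}{r}$, which is the claim. (One should also note the degenerate edge cases are harmless: if $n-b < r$ the statement is about comparing a positive integer with $e$ times zero, but the hypothesis $b \le \frac{n-r}{r+1}$ together with $b,r\ge 1$ already guarantees $n-b \ge n - \frac{n-r}{r+1} = \frac{rn+r}{r+1} \ge r$, so $\binom{n-b}{r}\ge 1$ and no division-by-zero issue arises.)

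I do not expect any real obstacle here; the only point requiring a little care is verifying that the hypothesis on $b$ translates cleanly into the bound $\frac{b}{n-b-r+1}\le\frac1r$ on the largest of the $r$ factors, and confirming $n-b\ge r$ so the right-hand side is a genuine binomial coefficient rather than zero. Everything else is the elementary inequality $(1+1/r)^r<e$.
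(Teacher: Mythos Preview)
Your proof is correct and follows essentially the same approach as the paper: write the ratio $\binom{n}{r}/\binom{n-b}{r}$ as a product of simple fractions, bound each factor uniformly, and invoke $(1+1/k)^k\le e$. The only cosmetic difference is that the paper groups the product into $b$ factors of the form $\frac{n-i}{n-i-r}$ (for $0\le i\le b-1$) and bounds each by $1+\frac{1}{b}$, whereas you group it into $r$ factors $\frac{n-i}{n-b-i}$ and bound each by $1+\frac{1}{r}$; both groupings are valid rearrangements of the same telescoping ratio, and the hypothesis $b\le\frac{n-r}{r+1}$ is precisely what makes either uniform bound work.
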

\begin{proof}
    For every $i \in [b]$ it follows from $b \le \frac{n-r}{r+1}$ that $\frac{n-i}{n-i-r} = 1+ \frac{r}{n-i-r} \le 1+ \frac{r}{n-b-r} \le 1+ \frac{1}{b}$.
    Therefore, 
    \begin{align*}
        \binom{n}{r} 
        = \prod_{i=0}^{b-1}\frac{n-i}{n-i-r} \binom{n-b}{r}
        \le \left(1+ \frac{1}{b}\right)^b \binom{n-b}{r}
        \le e \binom{n-b}{r}. 
    \end{align*}
\end{proof}

The following lemma says that $d(n,F)$ is smooth for every $F$. 

\begin{lemma}\label{LEMMA:smooth-degree}
    Let $F$ be an $r$-graph. 
    For every $n$ and $m \le  n/r -1$ we have 
    \begin{align*}
        \left| d(n,F) - d(n-m, F) \right| \le 4m \binom{n-m}{r-2}.
    \end{align*}
\end{lemma}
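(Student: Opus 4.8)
The plan is to expand the definition $d(n,F) = r\cdot\mathrm{ex}(n,F)/n$ and relate the difference $d(n,F)-d(n-m,F)$ to the quantities $\mathrm{ex}(n,F)$ and $\mathrm{ex}(n-m,F)$, which in turn differ by a controlled amount because of the monotonicity provided by Proposition~\ref{PROP:KNS}. First I would write
\begin{align*}
    d(n,F) - d(n-m,F)
    = \frac{r\cdot\mathrm{ex}(n,F)}{n} - \frac{r\cdot\mathrm{ex}(n-m,F)}{n-m}.
\end{align*}
The key input is Proposition~\ref{PROP:KNS}, which tells us that $\mathrm{ex}(n,F)/\binom{n}{r}$ is nonincreasing, hence
\begin{align*}
    \frac{\mathrm{ex}(n,F)}{\binom{n}{r}} \le \frac{\mathrm{ex}(n-m,F)}{\binom{n-m}{r}}
    \le \frac{\mathrm{ex}(n,F)}{\binom{n}{r}} + \text{(something small)};
\end{align*}
more usefully, it gives a two-sided comparison of $\mathrm{ex}(n,F)$ and $\mathrm{ex}(n-m,F)$ since $\mathrm{ex}(n,F) \ge \mathrm{ex}(n-m,F)$ trivially (an $F$-free graph on $n-m$ vertices extends to one on $n$ vertices), while the density bound caps how much larger $\mathrm{ex}(n,F)$ can be. Concretely, $\mathrm{ex}(n,F) - \mathrm{ex}(n-m,F) \le \mathrm{ex}(n,F)\bigl(1 - \binom{n-m}{r}/\binom{n}{r}\bigr) = \mathrm{ex}(n,F)\cdot\bigl(\binom{n}{r}-\binom{n-m}{r}\bigr)/\binom{n}{r}$, and one can estimate the right-hand side using Lemma~\ref{LEMMA:Binomal-Inequ} together with $\mathrm{ex}(n,F) \le \binom{n}{r}$.

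Next I would handle the two sources of discrepancy separately: the change in the numerator $\mathrm{ex}(\cdot,F)$ and the change in the denominator ($n$ versus $n-m$). Writing
\begin{align*}
    \frac{r\cdot\mathrm{ex}(n,F)}{n} - \frac{r\cdot\mathrm{ex}(n-m,F)}{n-m}
    = \frac{r\bigl(\mathrm{ex}(n,F) - \mathrm{ex}(n-m,F)\bigr)}{n}
      + r\cdot\mathrm{ex}(n-m,F)\left(\frac{1}{n} - \frac{1}{n-m}\right),
\end{align*}
I can bound the first term by $r\bigl(\binom{n}{r}-\binom{n-m}{r}\bigr)/n \le 2rm\binom{n-m}{r-1}/n$ using Lemma~\ref{LEMMA:Binomal-Inequ}; since $\binom{n-m}{r-1}/n \le \binom{n-m}{r-1}/(n-m+1) \le \binom{n-m}{r-2}/(r-1)$ roughly, this is of order $m\binom{n-m}{r-2}$. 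The second term is $-r\cdot\mathrm{ex}(n-m,F)\cdot m/(n(n-m))$, which in absolute value is at most $r\binom{n-m}{r}\cdot m/(n(n-m)) \le m\binom{n-m}{r-2}$ after simplifying $r\binom{n-m}{r}/(n-m) = \binom{n-m}{r-1}\cdot r/r$-type identities and bounding $\binom{n-m}{r-1}/n$. Finally I would collect the constants and check that the sum of the two contributions is at most $4m\binom{n-m}{r-2}$, which should have slack to spare.

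The only mild obstacle is bookkeeping: getting the binomial manipulations to land exactly at the clean constant $4$ requires being a little careful with the identities $\binom{n}{r}-\binom{n-m}{r} = \sum_{i=1}^r \binom{m}{i}\binom{n-m}{r-i}$ from Lemma~\ref{LEMMA:Binomal-Inequ} and with reducing $\binom{n-m}{r-1}/n$ to a multiple of $\binom{n-m}{r-2}$ — here one uses $\binom{n-m}{r-1} = \binom{n-m}{r-2}\cdot\frac{n-m-r+2}{r-1} \le \binom{n-m}{r-2}\cdot\frac{n-m}{r-1}$ and then $\frac{n-m}{n} \le 1$. None of this is deep; the hypothesis $m \le n/r - 1$ is exactly what is needed to apply Lemma~\ref{LEMMA:Binomal-Inequ} and to keep all the fractions comparable. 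I expect the constant $4$ to be comfortably achievable, with the two terms each contributing at most roughly $2m\binom{n-m}{r-2}$.
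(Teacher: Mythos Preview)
Your approach is essentially the paper's: the same algebraic decomposition
\[
d(n,F)-d(n-m,F)=\frac{r\bigl(\mathrm{ex}(n,F)-\mathrm{ex}(n-m,F)\bigr)}{n}-\frac{rm}{n(n-m)}\,\mathrm{ex}(n-m,F),
\]
the same use of Proposition~\ref{PROP:KNS} to bound $\mathrm{ex}(n,F)-\mathrm{ex}(n-m,F)$, and Lemma~\ref{LEMMA:Binomal-Inequ} to convert $\binom{n}{r}-\binom{n-m}{r}$ into $2m\binom{n-m}{r-1}$.

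One small correction is needed to land on the constant $4$. Your first term is bounded by $\tfrac{2rm}{n}\binom{n-m}{r-1}\le \tfrac{2r}{r-1}\cdot\tfrac{n-m}{n}\,m\binom{n-m}{r-2}\le 4m\binom{n-m}{r-2}$, not $2m\binom{n-m}{r-2}$ (the factor $\tfrac{r}{r-1}$ equals $2$ at $r=2$). So adding the absolute values of the two pieces only gives $5m\binom{n-m}{r-2}$. The paper avoids this by observing that both pieces are nonnegative, so that $|A-B|\le\max(A,B)$; since each of $A$ and $B$ is individually at most $\tfrac{2mr^2}{n(n-m-r+1)}\binom{n-m}{r}\le 4m\binom{n-m}{r-2}$, the bound follows. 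You already recorded the minus sign on the second term, so this is a one-line fix rather than a change of strategy.
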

\begin{proof}
    It follows from Proposition~\ref{PROP:KNS} that $\mathrm{ex}(n,F)/\binom{n}{r} \le \mathrm{ex}(n-m,F)/\binom{n-m}{r}$. 
    Therefore, 
    \begin{align*}
        \mathrm{ex}(n,F) - \mathrm{ex}(n-m,F)
        & \le \frac{\binom{n}{r}}{\binom{n-m}{r}}\mathrm{ex}(n-m,F) - \mathrm{ex}(n-m,F) \\
        & = \frac{\binom{n}{r}-\binom{n-m}{r}}{\binom{n-m}{r}}\mathrm{ex}(n-m,F) \\
        & \overset{\text{Lemma~} \ref{LEMMA:Binomal-Inequ}}\le \frac{2m \binom{n-m}{r-1}}{\binom{n-m}{r}}\mathrm{ex}(n-m,F) 
         = \frac{2mr}{n-m-r+1}\mathrm{ex}(n-m,F). 
    \end{align*}
    Consequently, 
    \begin{align*}
        \left| d(n,F) - d(n-m, F) \right|
        & =\left| \frac{r\cdot \mathrm{ex}(n,F)}{n} - \frac{r\cdot \mathrm{ex}(n-m,F)}{n-m} \right| \\
        & = \left| \frac{r}{n}\left(\mathrm{ex}(n,F)-\mathrm{ex}(n-m,F)\right) - \frac{rm}{n(n-m)}\mathrm{ex}(n-m,F) \right|\\
        & \le \max\left\{ \frac{2mr^2}{n(n-m-r+1)},\frac{rm}{n(n-m)} \right\} \cdot \mathrm{ex}(n-m,F) \\
        & \le \frac{2mr^2}{n(n-m-r+1)} \binom{n-m}{r} 
        \le 4m \binom{n-m}{r-2}. 
    \end{align*}
    This completes the proof of Lemma~\ref{LEMMA:smooth-degree}. 
\end{proof}

The following lemma deals with a simple case of Theorem~\ref{THEOREM:main-techincal} in which the maximum degree of every $r$-graph $\mathcal{H}_i$ is bounded away from $\binom{n-1}{r-1}$.  

\begin{lemma}\label{LEMMA:Case1-small-maxdeg-rainbow}
    Let $F$ be a nondegenerate $r$-graph with $m$ vertices.
    Suppose that $\mathrm{ex}(n,F)$ is $g$-smooth with $g(n) \le \frac{1-\pi(F)}{8m}\binom{n}{r-1}$ for all sufficiently large $n$. 
    Then there exists $N_1$ such that the following holds for all integers $n, t\in \mathbb{N}$ with $n\ge N_1$ and $t \le \frac{1-\pi(F)}{64rm^2} n$. 
    
    Suppose that $\left\{\mathcal{H}_1, \ldots, \mathcal{H}_{t+1}\right\}$ is a collection of $n$-vertex $r$-graphs on the same vertex set $V$ such that 
    \begin{align}
        |\mathcal{H}_i| \ge \mathrm{ex}(n-t,F) +t\binom{n-t}{r-1} \quad\text{and}\quad
        \Delta(\mathcal{H}_i) \le d(n-t,F)+ \frac{1-\pi(F)}{2m} \binom{n-t}{r-1} \notag
    \end{align}
    hold for all $i\in [t+1]$. 
    Then $\left\{\mathcal{H}_1, \ldots, \mathcal{H}_{t+1}\right\}$ contains a rainbow $F$-matching. 
\end{lemma}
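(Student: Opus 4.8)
The plan is to construct the rainbow $F$-matching greedily, extracting one copy of $F$ from one hypergraph at a time. Suppose that for some $1\le i\le t+1$ we have already found pairwise disjoint $m$-sets $S_1,\dots,S_{i-1}$ with $F\subseteq\mathcal{H}_j[S_j]$ for $j<i$, and set $U:=S_1\cup\dots\cup S_{i-1}$, so $|U|=(i-1)m$. It suffices to find a copy of $F$ in $\mathcal{H}_i$ avoiding $U$; taking its vertex set as $S_i$ then continues the induction, and after step $t+1$ the sets $S_1,\dots,S_{t+1}$ form a rainbow $F$-matching. Since $\mathcal{H}_i[V\setminus U]$ has $n-(i-1)m$ vertices, it is not $F$-free — hence contains the desired copy — as soon as $|\mathcal{H}_i[V\setminus U]|>\mathrm{ex}(n-(i-1)m,F)$. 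Deleting $(i-1)m$ vertices removes at most $(i-1)m\,\Delta(\mathcal{H}_i)$ edges, so $|\mathcal{H}_i[V\setminus U]|\ge|\mathcal{H}_i|-(i-1)m\,\Delta(\mathcal{H}_i)$; writing $s:=i-1\le t$ and substituting the two hypotheses, the lemma reduces to proving, for all $0\le s\le t$ and all large $n$,
\[
  \mathrm{ex}(n-t,F)+t\binom{n-t}{r-1}>\mathrm{ex}(n-sm,F)+sm\,d(n-t,F)+\frac{s(1-\pi(F))}{2}\binom{n-t}{r-1}.
\]

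To establish this I would expand $\mathrm{ex}(n-t,F)-\mathrm{ex}(n-sm,F)$ as a telescoping sum of the $|sm-t|$ increments $\delta(j,F)=\mathrm{ex}(j,F)-\mathrm{ex}(j-1,F)$ with $j$ between $n-sm$ and $n-t$. The $g$-smoothness hypothesis gives $|\delta(j,F)-d(j-1,F)|\le g(j)\le\frac{1-\pi(F)}{8m}\binom{n}{r-1}$, and Lemma~\ref{LEMMA:smooth-degree} gives $|d(j-1,F)-d(n-t,F)|\le 4tm\binom{n}{r-2}$ for every such $j$; hence $\mathrm{ex}(n-t,F)-\mathrm{ex}(n-sm,F)=(sm-t)d(n-t,F)+\mathrm{err}$ with $|\mathrm{err}|\le tm\bigl(4tm\binom{n}{r-2}+\frac{1-\pi(F)}{8m}\binom{n}{r-1}\bigr)$. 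After the $sm\,d(n-t,F)$ terms cancel and one uses $s\le t$, it remains to check
\[
  t\left(\binom{n-t}{r-1}-d(n-t,F)\right)>4t^2m^2\binom{n}{r-2}+\frac{t(1-\pi(F))}{8}\binom{n}{r-1}+\frac{t(1-\pi(F))}{2}\binom{n-t}{r-1}.
\]
By Proposition~\ref{PROP:KNS}, $\binom{n-t}{r-1}-d(n-t,F)=\bigl(1-\mathrm{ex}(n-t,F)/\binom{n-t}{r}\bigr)\binom{n-t}{r-1}\ge(1-\pi(F)-\varepsilon)\binom{n-t}{r-1}$ once $n$ is large. Moreover the bound $t\le\frac{1-\pi(F)}{64rm^2}n$ keeps $rt/n$ small, hence forces $\binom{n}{r-1}/\binom{n-t}{r-1}=\exp\!\bigl(O(rt/n)\bigr)$ close to $1$ and $n\binom{n}{r-2}/\binom{n-t}{r-1}$ close to $r-1$; plugging these in, the right-hand side above is at most roughly $(1-\pi(F))\bigl(\tfrac{1}{16}+\tfrac{1}{8}+\tfrac12\bigr)t\binom{n-t}{r-1}$, which is strictly below the left-hand side for $\varepsilon$ (and then $N_1$) chosen small enough.

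Conceptually, the reason this closes is that removing one vertex from $\mathcal{H}_i$ costs at most $\Delta(\mathcal{H}_i)\approx d(n-t,F)+\frac{1-\pi(F)}{2m}\binom{n-t}{r-1}$ edges, while the Tur\'an threshold $\mathrm{ex}(\cdot,F)$ itself also drops by about $d(n-t,F)$ per vertex; so the net ``surcharge'' of each of the at most $tm$ deleted vertices is only about $\frac{1-\pi(F)}{2m}\binom{n-t}{r-1}$, totalling about $\frac{t(1-\pi(F))}{2}\binom{n-t}{r-1}$, i.e.\ half of the surplus $t\bigl(\binom{n-t}{r-1}-d(n-t,F)\bigr)\approx t(1-\pi(F))\binom{n-t}{r-1}$ supplied by the hypothesis on $|\mathcal{H}_i|$. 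The step I expect to be the main obstacle is controlling the error term: it collects contributions both from $g$-smoothness and from the fluctuation of $d(\cdot,F)$ (Lemma~\ref{LEMMA:smooth-degree}), and must be kept genuinely below the (factor-two) slack above across the whole range $0\le s\le t$ — this is exactly why the coefficient in the bound on $t$ is taken so small, with the explicit factor $r$ ensuring the relevant binomial ratios stay near their limiting values.
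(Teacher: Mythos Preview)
Your proof is correct and follows essentially the same approach as the paper's. The paper phrases the argument via maximality (take a maximal $F$-rainbow collection of size $k\le t$, then bound $|\mathcal{H}_{k+1}|$ from above by $\mathrm{ex}(n-mk,F)+mk\,\Delta(\mathcal{H}_{k+1})$ and derive a contradiction), whereas you phrase it as a greedy construction, but the two reduce to the identical inequality and the error analysis --- telescoping $\mathrm{ex}(n-t,F)-\mathrm{ex}(n-sm,F)$ via smoothness and Lemma~\ref{LEMMA:smooth-degree}, then comparing against the surplus $t\bigl(\binom{n-t}{r-1}-d(n-t,F)\bigr)\approx t(1-\pi(F))\binom{n-t}{r-1}$ --- is the same in both.
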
 
\begin{proof}
    Given an integer $k \le t+1$, we say a collection $\mathcal{C} = \{S_1, \ldots, S_{k}\}$ of pairwise disjoint $m$-subsets of $V$ is \textbf{$F$-rainbow} if there exists an injection $f \colon [k]\to [t+1]$ such that $F \subset \mathcal{H}_{f(i)}[S_i]$ for all $i\in [k]$. 

    Fix a maximal collection $\mathcal{C} = \{S_1, \ldots, S_{k}\}$ of pairwise disjoint $m$-subsets of $V$ that is $F$-rainbow. 
    If $k = t+1$, then we are done. So we may assume that $k \le t$. 
    Without loss of generality, we may assume that $F \subset \mathcal{H}_i[S_i]$ for all $i\in [k]$ (i.e. $f$ is the identity map).
    Let $B = \bigcup_{i=1}^{k}S_i$ and let $b = |B| = mk$. 

    Let us count the number of edges in $\mathcal{H}_{k+1}$.
    Observe that every copy of $F$ in $\mathcal{H}_{k+1}$ must contain a vertex from $B$, 
    since otherwise, it would contradict the maximality of $\mathcal{C}$. 
    Therefore, the induced subgraph of $\mathcal{H}_{k+1}$ on $V_0:= V\setminus B$ is $F$-free. 
    Hence, by the maximum degree assumption, we obtain 
\begin{align*}
    |\mathcal{H}_{k+1}|
    & \le |\mathcal{H}_{k+1}[V_0]| 
        + b \left(d(n-t, F) + \frac{1-\pi(F)}{2m} \binom{n-t}{r-1} \right) \\
    & \le \mathrm{ex}(n-b, F) + b \left(d(n-t, F) + \frac{1-\pi(F)}{2m} \binom{n-t}{r-1} \right) \\
    & = \mathrm{ex}(n-t, F) + t \binom{n-t}{r-1} 
    - \left(\Delta_1 + \Delta_2\right), 
\end{align*}
where 
\begin{align*}
    \Delta_1 & := \mathrm{ex}(n-t, F)-\mathrm{ex}(n-b, F) - (b-t)d(n-t, F), \\
    \Delta_2 & := t \left(\binom{n-t}{r-1} - d(n-t, F)\right) - b \frac{1-\pi(F)}{2m} \binom{n-t}{r-1}.  
\end{align*}
Next, we will prove that $\Delta_1 + \Delta_2 >0$, which implies that $|\mathcal{H}_{k+1}|
    < \mathrm{ex}(n-t, F) + t \binom{n-t}{r-1}$ contradicting our assumption. 
    
Since $n-t \ge N_1/2$ is sufficiently large and $\lim_{n\to \infty}\mathrm{ex}(n-t,F)/\binom{n-t}{r} = \pi(F)$, we have 
$\mathrm{ex}(n-t,F) \le \left(\pi(F)+ \frac{1-\pi(F)}{5}\right)\binom{n-t}{r}$, and hence, 
\begin{align*}
    d(n-t, F) = \frac{r\cdot \mathrm{ex}(n-t,F)}{n-t} \le \left(\pi(F)+ \frac{1-\pi(F)}{5}\right)\binom{n-t}{r-1}.  
\end{align*}
Therefore, 
\begin{align*}
    \Delta_2
    & \ge t \left(1 -  \left(\pi(F)+ \frac{1-\pi(F)}{5}\right)\right) \binom{n-t}{r-1}
        - mt \frac{1-\pi(F)}{2m} \binom{n-t}{r-1} \\
    & \ge \frac{1-\pi(F)}{4}\binom{n-t}{r-1} t. 
\end{align*}
On the other hand, 
by Lemma~\ref{LEMMA:smooth-degree}, we have 
$$d(n-t, F) \le d(n-b,F) + 4(b-t)\binom{n-b}{r-2} \le d(n-b, F) + 4mt\binom{n-t}{r-2}.$$
Therefore, it follows from the Smoothness assumption and $g$ is nondecreasing that  
\begin{align*}
    \Delta_1 
     & = \sum_{i=1}^{b-t}\left(\mathrm{ex}(n-b+i, F) - \mathrm{ex}(n-b+i-1, F)\right) - (b-t)d(n-t,F) \\
     \overset{\text{Smoothness}}&{\ge} \sum_{i=0}^{b-t-1}\left(d(n-b+i, F) - g(n-b+i+1)\right) - (b-t)d(n-t,F) \\
    \overset{\text{Nondecreasing}}&{\ge} \sum_{i=0}^{b-t-1}\left(d(n-b+i, F) - d(n-t,F)\right) - (b-t)g(n-t) \\
    \overset{\text{Lemma~\ref{LEMMA:smooth-degree}}}&{\ge} 
                -\sum_{i=0}^{b-t-1}4(b-t-i)\binom{n-b+i}{r-2} - (b-t)g(n-t) \\
    & \ge - 4m^2t^2 \binom{n-t-1}{r-2} - mt \cdot g(n-t)
    = - \frac{4(r-1)m^2t^2}{n-t} \binom{n-t}{r-1} - mt \cdot  g(n-t). 
\end{align*}
Since $t \le \frac{1-\pi(F)}{64rm^2} n$, we obtain 
$\frac{4(r-1)m^2t^2}{n-t} < \frac{1-\pi(F)}{8} t$.  
Together with $g(n-t) \le \frac{1-\pi(F)}{8m}\binom{n-t}{r-1}$, we obatin 
\begin{align*}
    \Delta_1  
    > -\left(\frac{1-\pi(F)}{8} t + m t \frac{1-\pi(F)}{8m}\right) \binom{n-t}{t-1}
    = - \frac{1-\pi(F)}{4} t \binom{n-t}{r-1}. 
\end{align*}
Therefore, $\Delta_1 + \Delta_2 > 0$. 
This finishes the proof of Lemma~\ref{LEMMA:Case1-small-maxdeg-rainbow}. 
\end{proof}

\subsection{Proof of Theorem~\ref{THEOREM:main-techincal}}\label{SUBSEC:proof-main-tech}
We prove Theorem~\ref{THEOREM:main-techincal} in this section. 
Let us prove Part~\ref{lab:tech-rainbow} first. 

\begin{proof}[Proof of Theorem~\ref{THEOREM:main-techincal}~\ref{lab:tech-rainbow}]
Fix a sufficiently large constant $N_0$ and suppose that $n \ge N_0$. 
Let $k \le t+1$. 
We say a collection $L:= \{v_1, \ldots, v_k\}$ of vertices in $V$ is \textbf{heavy-rainbow} if there exists an injection $f \colon [k] \to [t+1]$ such that 
\begin{align*}
    d_{\mathcal{H}_{f(i)}}(v_i) 
    \ge d(n-t, F) + \frac{1-\pi(F)}{2m} \binom{n-t}{r-1} \quad\text{for all } i\in [k]. 
\end{align*}
Fix a maximal collection $L:= \{v_1, \ldots, v_k\}$ of vertices that is heavy-rainbow.
Without loss of generality, we may assume that $f$ (defined above) is the identity map. 

Let $V_0 = V \setminus L$ and $\mathcal{H}'_{j} = \mathcal{H}_{j}[V_0]$ for all $j\in [k+1, t+1]$. 
For every $j\in [k+1, t+1]$
observe that there are at most $\binom{n}{r}-\binom{n-k}{r}$ edges in $\mathcal{H}_j$ that have nonempty intersection with $L$. Hence, 
\begin{align*}
    |\mathcal{H}'_{j}|
    & \ge |\mathcal{H}_j| - \left(\binom{n}{r}-\binom{n-k}{r}\right) \\
    & \ge \mathrm{ex}(n-t, F) + \binom{n}{r}-\binom{n-t}{r} - \left(\binom{n}{r}-\binom{n-k}{r}\right) \\
    & = \mathrm{ex}((n-k)-(t-k), F) + \binom{n-k}{r} - \binom{(n-k)-(t-k)}{r}. 
\end{align*}
On the other hand, it follows from the maximality of $L$ that 
\begin{align*}
    \Delta(\mathcal{H}'_{j})
    \le \Delta(\mathcal{H}_j)
    & \le d(n-t, F) + \frac{1-\pi(F)}{2m} \binom{n-t}{r-1} \\
    & = d((n-k)-(t-k), F) + \frac{1-\pi(F)}{2m} \binom{(n-k)-(t-k)}{r-1} 
\end{align*}
holds for all $j\in [k+1, t+1]$. 
By assumption, $\frac{t-k}{n-k} \le \frac{t}{n} \le \frac{1-\pi(F)}{64rm^2}$ and $n-k \ge n/2$ is sufficiently large,
so it follows from Lemma~\ref{LEMMA:Case1-small-maxdeg-rainbow} that 
there exists a collection $\mathcal{C} = \{S_{k+1}, \ldots, S_{t+1}\}$ of pairwise disjoint $m$-subsets of $V_0$ such that $F \subset \mathcal{H}'_{j}[S_j]$ for all $j\in [k+1, t+1]$. 

Next we will find a collection of rainbow copies of $F$ from $\{\mathcal{H}_1, \ldots, \mathcal{H}_k\}$. 

\begin{claim}\label{CLAIM:F-avoid-Bi}
For every $i\in [k]$ and for every set $B_i \subset V \setminus \{v_i\}$ of size at most $2mt$ there exists a copy of $F$ in $\mathcal{H}_i[V\setminus B_i]$. 
\end{claim}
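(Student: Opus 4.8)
The plan is to argue by contradiction, exploiting that $v_i$ survives the deletion (since $v_i\notin B_i$) while still having enormous degree. Write $W:=V\setminus B_i$, $b:=|B_i|\le 2mt$, $n':=|W|=n-b\ge n-2mt$, and $\mathcal{H}':=\mathcal{H}_i[W]$; assume for contradiction that $\mathcal{H}'$ is $F$-free.

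First I would check that $\mathcal{H}'$ still has average degree high enough to invoke the boundedness hypothesis~\ref{lab:bounded}. Deleting $B_i$ destroys only the $\binom{n}{r}-\binom{n'}{r}$ edges meeting $B_i$, so
\[
  |\mathcal{H}'|\ \ge\ |\mathcal{H}_i|-\Bigl(\binom{n}{r}-\binom{n'}{r}\Bigr)\ >\ \binom{n'}{r}-\binom{n-t}{r}+\mathrm{ex}(n-t,F).
\]
Dividing by $n'/r$ and comparing $\mathrm{ex}(n',F)$ with $\mathrm{ex}(n-t,F)$ term by term — using the $\frac{1-\pi(F)}{8m}\binom{\cdot}{r-1}$-smoothness from~\ref{lab:smooth} together with $\pi(F)\binom{j}{r}\le\mathrm{ex}(j,F)\le\binom{j}{r}$ — shows that $d(\mathcal{H}')$ differs from $d(n',F)$ by at most $O\!\left(mt\binom{n}{r-1}\right)$; it is precisely the hypothesis $2emt\binom{n-2mt}{r-2}\le f(n-2mt)$, with $f$ nondecreasing, that lets this be absorbed into $f(n')$, giving $d(\mathcal{H}')\ge d(n',F)-f(n')$. (Both the case $n'\ge n-t$ and the case $n'<n-t$ must be handled here.) Then~\ref{lab:bounded} yields $\Delta(\mathcal{H}')\le d(n',F)+\frac{1-\pi(F)}{4m}\binom{n'}{r-1}$, so in particular $d_{\mathcal{H}'}(v_i)\le d(n',F)+\frac{1-\pi(F)}{4m}\binom{n'}{r-1}$.

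Next I would bound $d_{\mathcal{H}'}(v_i)$ from below. The edges of $\mathcal{H}_i$ at $v_i$ that disappear are exactly those meeting $B_i$, of which there are at most $\binom{n-1}{r-1}-\binom{n-1-b}{r-1}\le 2mt\binom{n-1}{r-2}$; and since $i\in[k]$, the vertex $v_i$ lies in the maximal heavy-rainbow set (with the injection taken to be the identity), so $d_{\mathcal{H}_i}(v_i)\ge d(n-t,F)+\frac{1-\pi(F)}{2m}\binom{n-t}{r-1}$. Hence
\[
  d_{\mathcal{H}'}(v_i)\ \ge\ d(n-t,F)+\frac{1-\pi(F)}{2m}\binom{n-t}{r-1}-2mt\binom{n-1}{r-2}.
\]
Combining the upper and lower bounds on $d_{\mathcal{H}'}(v_i)$, the net ``gain'' $\frac{1-\pi(F)}{2m}\binom{n-t}{r-1}-\frac{1-\pi(F)}{4m}\binom{n'}{r-1}$ is at least about $\frac{1-\pi(F)}{4m}\binom{n}{r-1}$, while the error terms $2mt\binom{n-1}{r-2}$ and $\lvert d(n-t,F)-d(n',F)\rvert$ — the latter controlled by Lemma~\ref{LEMMA:smooth-degree} — are $O\!\left(mt\binom{n}{r-2}\right)=O\!\left(\tfrac{1-\pi(F)}{rm}\binom{n}{r-1}\right)$ because $t\le\frac{1-\pi(F)}{64rm^2}n$. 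For $n\ge N_0$ large enough this is a contradiction, so $\mathcal{H}_i[V\setminus B_i]$ must contain a copy of $F$.

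I expect the main obstacle to be purely the bookkeeping: the constants $\frac{1-\pi(F)}{64rm^2},\frac{1-\pi(F)}{2m},\frac{1-\pi(F)}{4m},\frac{1-\pi(F)}{8m}$ are calibrated so that each binomial-coefficient comparison goes through with essentially no slack, and in particular the average-degree step is what forces the technical hypothesis on $f$ to be stated in the somewhat awkward form $2emt\binom{n-2mt}{r-2}\le f(n-2mt)$ rather than anything cleaner.
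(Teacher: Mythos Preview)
Your proposal is correct and follows essentially the same line as the paper's proof: assume $F$-freeness, show the average degree of the restricted hypergraph is high enough to trigger the boundedness hypothesis, then derive a contradiction from the surviving large degree of $v_i$ (using Lemma~\ref{LEMMA:smooth-degree} to compare $d(n-t,F)$ with $d(n',F)$). The only notable simplification the paper makes is to assume without loss of generality that $|B_i|=2mt$ exactly (enlarging $B_i$ inside $V\setminus\{v_i\}$ if necessary), which fixes $n'=n-2mt$ and lets the technical hypothesis $2emt\binom{n-2mt}{r-2}\le f(n-2mt)$ be invoked directly, eliminating the case split you flag between $n'\ge n-t$ and $n'<n-t$; the paper also uses Lemma~\ref{LEMMA:binom-inequ-b} rather than smoothness to pass from $\mathrm{ex}(n-t,F)$ to $\mathrm{ex}(n-2mt,F)$ in the average-degree step, but this is cosmetic.
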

\begin{proof}
    Fix $i \in [k]$ and fix a set $B_i \subset V \setminus \{v_i\}$ of size at most $2mt$. We may assume that $|B_i| = 2mt$. 
    Let $V_i = V\setminus B_i$ and  $n_i = |V_i| = n-2mt$. 
    Let $\mathcal{H}_i' = \mathcal{H}_i[V_i]$. 
    Since the number of edges in $\mathcal{H}_i$ containing $v_i$ that have nonempty intersection with $B_i$ is at most $2mt \binom{n-1}{r-2}$, we have 
    \begin{align}\label{equ:Main-max-deg}
        d_{\mathcal{H}_i'}(v_i)
        & \ge d(n-t, F) + \frac{1-\pi(F)}{2m} \binom{n-t}{r-1} - 2mt \binom{n-1}{r-2}  \notag\\
        \overset{\text{Lemma~\ref{LEMMA:smooth-degree}}}&{\ge} 
                d(n-2mt, F) - 2mt\binom{n-2mt}{r-2} + \frac{1-\pi(F)}{2m} \binom{n-t}{r-1} - 2mt \binom{n-1}{r-2} \notag \\
        & > d(n-2mt, F) + \frac{1-\pi(F)}{4m} \binom{n-2mt}{r-1}, 
    \end{align}
    where the last inequality holds because $t \le \frac{1-\pi(F)}{64rm^2}n$ and $n$ is sufficiently large. 
    
    Similarly, we have 
    \begin{align*}
        |\mathcal{H}_i'| 
        \ge |\mathcal{H}_i| - 2mt\binom{n-1}{r-1}
        & > \mathrm{ex}(n-t,F)+ \binom{n}{r}-\binom{n-t}{r}-2mt\binom{n-1}{r-1} \\
         \overset{\text{Lemma~\ref{LEMMA:binom-inequ-b}}}&{\ge}  \mathrm{ex}(n-2mt,F) - 2emt\binom{n-2mt}{r-1}. 
    \end{align*}
    which, by the assumption $f(n-2mt) \ge 2emt\binom{n-2mt}{r-2}$, implies that 
    \begin{align}\label{equ:Main-min-deg}
    d(\mathcal{H}_{i'}) = 
        \frac{r\cdot |\mathcal{H}_i'|}{n-2mt}
        & \ge d(n-2mt,F) - 2emt\binom{n-2mt-1}{r-2} \notag \\
        & > d(n-2mt,F) - f(n-2mt). 
    \end{align}
    It follows from (\ref{equ:Main-max-deg}), (\ref{equ:Main-min-deg}), and the Boundedness assumption that $F \subset \mathcal{H}_{i}'$. 
\end{proof}

Let $B = L \cup S_{k+1}\cup \cdots \cup S_{t+1}$. 
Now we can repeatedly apply Claim~\ref{CLAIM:F-avoid-Bi} to find a collection of  rainbow copies of $F$ as follows. 
First, we let $B_1 = B\setminus \{v_1\}$. 
Since $|B_1| = k-1+m(t+1-k) \le 2mt$, Claim~\ref{CLAIM:F-avoid-Bi} applied to $v_1$, $B_1$, and $\mathcal{H}_1$ yields an $m$-set $S_1 \subset V\setminus B_1$ such that $F \subset \mathcal{H}_1[S_1]$. 
Suppose that we have define $S_1, \ldots, S_i$ for some $i\in [k-1]$ such that $F \subset \mathcal{H}_j[S_j]$ holds for all $j\le i$. 
Then let $B_{i+1} = (B\cup S_1 \cup \cdots \cup S_i) \setminus \{v_{i+1}\}$. 
Since $|B_{i+1}| = k-1+m(t+1-k) + im \le 2 mt$, Claim~\ref{CLAIM:F-avoid-Bi} applied to $v_{i+1}$, $B_{i+1}$, and $\mathcal{H}_{i+1}$ yields an $m$-set $S_{i+1} \subset V\setminus B_{i+1}$ such that $F \subset \mathcal{H}_{i+1}[S_{i+1}]$.
At the end of this process, we obtain a collection $\{S_1, \ldots, S_k\}$ of pairwise disjoint sets such that $F \subset \mathcal{H}_{i}[S_{i}]$ holds for all $i\in [k]$.
Since $S_i \cap S_j = \emptyset$ for all $i\in [k]$ and $j\in [k+1, t+1]$, 
the set $\{S_1, \ldots, S_{t+1}\}$ yields a rainbow $F$-matching. 
\end{proof}

Before proving Part~\ref{lab:tech-Turan} of Theorem~\ref{THEOREM:main-techincal}, we need the simple corollary of Lemma~\ref{LEMMA:Case1-small-maxdeg-rainbow}. 

\begin{lemma}\label{LEMMA:Case1-small-maxdeg}
    Let $F$ be a nondegenerate $r$-graph with $m$ vertices.
    Suppose that $\mathrm{ex}(n,F)$ is $g$-smooth with $g(n) \le \frac{1-\pi(F)}{8m}\binom{n}{r-1}$ for all sufficiently large $n$. 
    Then there exists $N_1$ such that the following holds for all integers $n, t\in \mathbb{N}$ with $n\ge N_1$ and $t \le \frac{1-\pi(F)}{64rm^2} n$. 

    Suppose that $\mathcal{H}$ is an $n$-vertex $r$-graphs with
    \begin{align}
        \Delta(\mathcal{H}) \le d(n-t,F)+ \frac{1-\pi(F)}{2m} \binom{n-t}{r-1}
        \quad\text{and}\quad \nu(F, \mathcal{H})< t+1. \notag
    \end{align}
    Then 
    \begin{align*}
        |\mathcal{H}| < \mathrm{ex}(n-t,F) + t \binom{n-t}{r-1}. 
    \end{align*}
\end{lemma}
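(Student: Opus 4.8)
The plan is to deduce this lemma immediately from Lemma~\ref{LEMMA:Case1-small-maxdeg-rainbow} by taking all the hypergraphs in the collection to be equal. First I would argue by contradiction: suppose the conclusion fails, so that $\mathcal{H}$ is an $n$-vertex $r$-graph with $\nu(F,\mathcal{H}) < t+1$, with $\Delta(\mathcal{H}) \le d(n-t,F) + \frac{1-\pi(F)}{2m}\binom{n-t}{r-1}$, and with
\begin{align*}
    |\mathcal{H}| \ge \mathrm{ex}(n-t,F) + t\binom{n-t}{r-1}.
\end{align*}

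Next I would set $\mathcal{H}_1 = \cdots = \mathcal{H}_{t+1} = \mathcal{H}$ and observe that this collection satisfies exactly the two hypotheses of Lemma~\ref{LEMMA:Case1-small-maxdeg-rainbow}, namely $|\mathcal{H}_i| \ge \mathrm{ex}(n-t,F) + t\binom{n-t}{r-1}$ and $\Delta(\mathcal{H}_i) \le d(n-t,F) + \frac{1-\pi(F)}{2m}\binom{n-t}{r-1}$ for all $i \in [t+1]$. Choosing $N_1$ to be (at least) the threshold supplied by Lemma~\ref{LEMMA:Case1-small-maxdeg-rainbow}, and noting that the range $t \le \frac{1-\pi(F)}{64rm^2}n$ is the same in both statements, that lemma yields a rainbow $F$-matching: pairwise disjoint $m$-sets $S_1,\ldots,S_{t+1}$ with $F \subset \mathcal{H}_{i}[S_i]$ for all $i$.

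Since all the $\mathcal{H}_i$ coincide with $\mathcal{H}$, this is simply a collection of $t+1$ pairwise vertex-disjoint copies of $F$ inside $\mathcal{H}$, so $\nu(F,\mathcal{H}) \ge t+1$, contradicting our assumption. Hence $|\mathcal{H}| < \mathrm{ex}(n-t,F) + t\binom{n-t}{r-1}$, as required. There is essentially no obstacle here: the content is entirely in Lemma~\ref{LEMMA:Case1-small-maxdeg-rainbow}, and the only thing to check is the trivial observation that a rainbow $F$-matching in a constant family $\{\mathcal{H},\ldots,\mathcal{H}\}$ is an ordinary $F$-matching of the same size.
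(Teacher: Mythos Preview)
Your proposal is correct and is precisely the approach the paper takes: the paper introduces this lemma as ``the simple corollary of Lemma~\ref{LEMMA:Case1-small-maxdeg-rainbow}'' and does not spell out any further details, so your reduction via $\mathcal{H}_1=\cdots=\mathcal{H}_{t+1}=\mathcal{H}$ is exactly what is intended.
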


Now we are ready to prove  Part~\ref{lab:tech-Turan}. 

\begin{proof}[Proof of Theorem~\ref{THEOREM:main-techincal}~\ref{lab:tech-Turan}] 
    Let $\mathcal{H}$ be an $n$-vertex $r$-graph with $\mathrm{ex}(n, (t+1)F)$ edges and $\nu(F, \mathcal{H}) < t+1$. 
    Note that Theorem~\ref{THEOREM:main-techincal}~\ref{lab:tech-rainbow} already implies that 
    $\mathrm{ex}(n, (t+1)F) \le \binom{n}{r}-\binom{n-t}{r}+\mathrm{ex}(n-t, F)$.
    So, it suffices to show that $\mathcal{H}$ is isomorphic to $K_{t}^r \uproduct \mathcal{G}$ for some $\mathcal{G} \in \mathrm{EX}(n-t,F)$. 

    Let $V = V(\mathcal{H})$ and define 
    \begin{align*}
        L := \left\{v\in V\colon d_{\mathcal{H}}(v) \ge d(n-t, F) + \frac{1-\pi(F)}{2m} \binom{n-t}{r-1}\right\}. 
    \end{align*}

    A similar argument as in the proof of Claim~\ref{CLAIM:F-avoid-Bi} yields the following claim. 
    \begin{claim}\label{CLAIM:F-avoid-Bi-B}
        For every $v\in L$ and for every set $B\subset V\setminus \{v\}$ of size at most $2mt$ there exists a copy of $F$ in $\mathcal{H}[V\setminus B]$. 
    \end{claim}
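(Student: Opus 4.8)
\textbf{Proof plan for Claim~\ref{CLAIM:F-avoid-Bi-B}.}
The plan is to mimic the argument used for Claim~\ref{CLAIM:F-avoid-Bi}, replacing the rainbow collection $\{\mathcal{H}_1,\dots,\mathcal{H}_{t+1}\}$ by the single $r$-graph $\mathcal{H}$ and using the vertex $v\in L$ in place of one of the heavy vertices $v_i$. Fix $v\in L$ and a set $B\subseteq V\setminus\{v\}$ with $|B|\le 2mt$; we may assume $|B|=2mt$ (otherwise enlarge it arbitrarily inside $V\setminus\{v\}$). Set $V'=V\setminus B$, $n'=|V'|=n-2mt$, and $\mathcal{H}'=\mathcal{H}[V']$. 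The goal is to verify that $\mathcal{H}'$ satisfies the hypotheses of the Boundedness assumption applied at $n'$, together with a large degree at $v$ that exceeds the allowed maximum degree there; this forces $F\subseteq\mathcal{H}'$.

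First I would bound $d_{\mathcal{H}'}(v)$ from below. At most $2mt\binom{n-1}{r-2}$ edges through $v$ meet $B$, so $d_{\mathcal{H}'}(v)\ge d(n-t,F)+\frac{1-\pi(F)}{2m}\binom{n-t}{r-1}-2mt\binom{n-1}{r-2}$ since $v\in L$. Applying Lemma~\ref{LEMMA:smooth-degree} to replace $d(n-t,F)$ by $d(n-2mt,F)$ (at the cost of $2mt\binom{n-2mt}{r-2}$), and using $t\le\frac{1-\pi(F)}{64rm^2}n$ with $n$ large, the two error terms $2mt\binom{n-2mt}{r-2}$ and $2mt\binom{n-1}{r-2}$ are each $o\!\left(\frac1m\binom{n-2mt}{r-1}\right)$, so we obtain $d_{\mathcal{H}'}(v)>d(n-2mt,F)+\frac{1-\pi(F)}{4m}\binom{n-2mt}{r-1}$, exactly as in~\eqref{equ:Main-max-deg}. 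Next I would bound $|\mathcal{H}'|$: deleting $B$ removes at most $2mt\binom{n-1}{r-1}$ edges, so $|\mathcal{H}'|\ge |\mathcal{H}|-2mt\binom{n-1}{r-1}$. Here $|\mathcal{H}|=\mathrm{ex}(n,(t+1)F)$, and by Part~\ref{lab:tech-rainbow} (already proved) we have $\mathrm{ex}(n,(t+1)F)\le\binom{n}{r}-\binom{n-t}{r}+\mathrm{ex}(n-t,F)$ — but for the lower bound we instead note $|\mathcal{H}|\ge\binom{n}{r}-\binom{n-t}{r}+\mathrm{ex}(n-t,F)$ holds as well since $K_t^r\uproduct\mathcal{G}$ with $\mathcal{G}\in\mathrm{EX}(n-t,F)$ is $(t+1)F$-free; combining these, $|\mathcal{H}|=\binom{n}{r}-\binom{n-t}{r}+\mathrm{ex}(n-t,F)$, and then via Lemma~\ref{LEMMA:binom-inequ-b} one gets $|\mathcal{H}'|>\mathrm{ex}(n-2mt,F)-2emt\binom{n-2mt}{r-1}$, hence $d(\mathcal{H}')>d(n-2mt,F)-2emt\binom{n-2mt-1}{r-2}\ge d(n-2mt,F)-f(n-2mt)$ by the assumption $f(n-2mt)\ge 2emt\binom{n-2mt}{r-2}$, just as in~\eqref{equ:Main-min-deg}.

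Finally, if $F\not\subseteq\mathcal{H}'$, then $\mathcal{H}'$ is an $F$-free $r$-graph on $n'=n-2mt$ vertices with $d(\mathcal{H}')\ge d(n',F)-f(n')$, so the Boundedness hypothesis~\ref{lab:bounded} gives $\Delta(\mathcal{H}')\le d(n',F)+\frac{1-\pi(F)}{4m}\binom{n'}{r-1}$, contradicting the degree bound $d_{\mathcal{H}'}(v)>d(n',F)+\frac{1-\pi(F)}{4m}\binom{n'}{r-1}$ established above. Therefore $F\subseteq\mathcal{H}'=\mathcal{H}[V\setminus B]$, which is what we wanted. I do not anticipate a serious obstacle here: the only mild subtlety is that we must invoke the already-established equality $|\mathcal{H}|=\binom{n}{r}-\binom{n-t}{r}+\mathrm{ex}(n-t,F)$ (using both directions) so that the edge count of $\mathcal{H}$ is pinned down exactly before deleting $B$; everything else is a routine repetition of the estimates in Claim~\ref{CLAIM:F-avoid-Bi}.
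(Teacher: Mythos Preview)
Your proposal is correct and follows essentially the same approach as the paper, which simply states that ``a similar argument as in the proof of Claim~\ref{CLAIM:F-avoid-Bi}'' yields the claim. Your only addition is making explicit that $|\mathcal{H}|=\binom{n}{r}-\binom{n-t}{r}+\mathrm{ex}(n-t,F)$ via the already-proved upper bound from Part~\ref{lab:tech-rainbow} together with the obvious lower bound from the construction $K_t^r\uproduct\mathcal{G}$; this is exactly the missing ingredient needed to transplant the edge-count estimate from Claim~\ref{CLAIM:F-avoid-Bi} to the present setting.
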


    Let $\ell=|L|$. We have the following claim for $\ell$. 
    
    \begin{claim}\label{CLAIM:ell<t+1}
        We have $\ell \le t$. 
    \end{claim}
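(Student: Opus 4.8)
The plan is to prove the claim by contradiction: suppose $\ell \ge t+1$ and build $t+1$ pairwise disjoint copies of $F$ inside $\mathcal{H}$, contradicting $\nu(F,\mathcal{H}) < t+1$. This is the non-rainbow analogue of the greedy construction carried out at the end of the proof of Theorem~\ref{THEOREM:main-techincal}~\ref{lab:tech-rainbow}, with Claim~\ref{CLAIM:F-avoid-Bi-B} now playing the role that Claim~\ref{CLAIM:F-avoid-Bi} played there. First I would fix $t+1$ distinct vertices $v_1,\dots,v_{t+1}\in L$; these serve as anchors that keep the hypothesis ``$v\in L$'' of Claim~\ref{CLAIM:F-avoid-Bi-B} available at every step.

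Then I would construct the copies $S_1,\dots,S_{t+1}$ one at a time. Having already produced pairwise disjoint copies $S_1,\dots,S_{i-1}$ of $F$, set $B_i := S_1\cup\cdots\cup S_{i-1}\cup\{v_{i+1},\dots,v_{t+1}\}$ and apply Claim~\ref{CLAIM:F-avoid-Bi-B} to the vertex $v_i$ and the set $B_i$ to obtain a copy $S_i$ of $F$ with $S_i\subseteq V\setminus B_i$; by construction $S_i$ is disjoint from $S_1,\dots,S_{i-1}$. Two routine checks legitimise this. First, $v_i\notin B_i$: indeed $v_i$ differs from each $v_j$ with $j>i$, and for each $j<i$ the set $S_j$ was chosen inside $V\setminus B_j$ where $B_j\supseteq\{v_{j+1},\dots,v_{t+1}\}\ni v_i$, so $v_i\notin S_j$. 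Second, $|B_i| = (i-1)m + (t+1-i) \le tm \le 2mt$, because $m\ge 2$ makes the middle expression increasing in $i$ with maximum $tm$ attained at $i=t+1$.

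After $t+1$ steps the sets $S_1,\dots,S_{t+1}$ are $t+1$ pairwise disjoint copies of $F$ in $\mathcal{H}$, so $\nu(F,\mathcal{H})\ge t+1$, the desired contradiction; hence $\ell\le t$. I do not expect a genuine obstacle here: all of the analytic content (the Boundedness hypothesis together with Lemma~\ref{LEMMA:smooth-degree}) has already been absorbed into Claim~\ref{CLAIM:F-avoid-Bi-B}, and what remains is exactly the disjointness-and-size bookkeeping above. The only point requiring a little care is ensuring that each anchor $v_i$ is still untouched when its turn comes, which is precisely why the not-yet-used anchors are retained inside $B_j$ at every earlier step $j$.
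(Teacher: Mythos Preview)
Your proposal is correct and follows essentially the same approach as the paper: assume $\ell\ge t+1$, fix $t+1$ anchors in $L$, and greedily apply Claim~\ref{CLAIM:F-avoid-Bi-B} while keeping the not-yet-used anchors inside the forbidden set $B_i$ so that each anchor remains available when its turn comes. Your bookkeeping is in fact slightly tidier than the paper's (you explicitly verify $v_i\notin B_i$ and give the sharp bound $|B_i|\le tm$), but the argument is the same.
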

    \begin{proof}
        Suppose to the contrary that $\ell \ge t+1$. By taking a subset of $L$ if necessary, we may assume that $\ell = t+1$. 
        Let us assume that $L = \{v_1, \ldots, v_{t+1}\}$. 
        We will repeatedly apply Claim~\ref{CLAIM:F-avoid-Bi-B} to find a collection $\{S_1, \ldots, S_{t+1}\}$ of pairwise disjoint $m$-sets such that $F\subset \mathcal{H}[S_i]$ for all $i\in [t+1]$ as follows. 
        
        Let $B_1 = L\setminus \{v_1\}$. Since $|B_1| \le 2mt$,  it follows from Claim~\ref{CLAIM:F-avoid-Bi-B} that there exists a set $S_1 \subset V\setminus B$ such that $F\subset \mathcal{H}[S_1]$. 
        Now suppose that we have found pairwise disjoint $m$-sets $S_1, \ldots, S_i$ for some $i \le t$. 
        Let $B_{i+1} = \left(L\cup S_1 \cup \cdots \cup S_i\right) \setminus \{v_i\}$. 
        It is clear that $|B_{i+1}| \le 2mt$. So it follows from Claim~\ref{CLAIM:F-avoid-Bi-B} that there exists a set $S_{i+1} \subset V\setminus B$ such that $F\subset \mathcal{H}[S_{i+1}]$. Repeat this process for $t+1$ times, we find  the collection $\{S_1, \ldots, S_{t+1}\}$ that satisfies the assertion. However, this contradicts the assumption that $\nu(F, \mathcal{H}) < t+1$. 
    \end{proof}

    Let $V_0 = V\setminus L$ and $\mathcal{H}_0 = \mathcal{H}[V_0]$. 
    The following claim follows from 
    a similar argument as in the last paragraph of the proof of Theorem~\ref{THEOREM:main-techincal}. 
    \begin{claim}\label{CLAIM:v(F,H_0)}
        We have $\nu(F, \mathcal{H}_0) < t-\ell+1$. 
    \end{claim}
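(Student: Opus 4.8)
The plan is to argue by contradiction, mirroring the repeated-application-of-Claim~\ref{CLAIM:F-avoid-Bi-B} idea already used at the end of the proof of Theorem~\ref{THEOREM:main-techincal}~\ref{lab:tech-rainbow}. If $\ell = 0$ then $\mathcal{H}_0 = \mathcal{H}$ and the asserted inequality $\nu(F,\mathcal{H}_0) < t+1$ is exactly the standing hypothesis, so I would assume $\ell \ge 1$ and write $L = \{v_1,\dots,v_\ell\}$. Suppose for contradiction that $\nu(F,\mathcal{H}_0) \ge t-\ell+1$ and fix $t-\ell+1$ pairwise vertex-disjoint copies $T_1,\dots,T_{t-\ell+1}$ of $F$ inside $\mathcal{H}_0 = \mathcal{H}[V_0]$; these occupy at most $m(t-\ell+1)$ vertices, all contained in $V_0 = V\setminus L$.

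Next I would greedily append $\ell$ more vertex-disjoint copies of $F$, one for each heavy vertex, using the vertices of $L$ purely as anchors for Claim~\ref{CLAIM:F-avoid-Bi-B}. After $S_1,\dots,S_{i-1}$ have been produced, set $B_i := (L\setminus\{v_i\})\cup\bigcup_{j=1}^{t-\ell+1}V(T_j)\cup\bigcup_{j<i}S_j$ and invoke Claim~\ref{CLAIM:F-avoid-Bi-B} with the anchor $v_i$ to obtain a copy of $F$ in $\mathcal{H}[V\setminus B_i]$, whose vertex set we call $S_i$. By construction $S_i$ is disjoint from all $T_j$ and from $S_1,\dots,S_{i-1}$, so after $\ell$ steps the sets $V(T_1),\dots,V(T_{t-\ell+1}),S_1,\dots,S_\ell$ form $t+1$ pairwise disjoint copies of $F$ in $\mathcal{H}$, contradicting $\nu(F,\mathcal{H}) < t+1$.

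The only points requiring care are two routine bookkeeping checks, both essentially identical to accounting already performed in the excerpt. First, one must verify $v_i\notin B_i$ so that Claim~\ref{CLAIM:F-avoid-Bi-B} applies: this holds because $V(T_j)\subseteq V_0$ avoids $L\ni v_i$, and each earlier $S_j$ was chosen inside $V\setminus B_j\subseteq V\setminus(L\setminus\{v_j\})$ and hence avoids $v_i$ for $j\neq i$. Second, one needs $|B_i|\le 2mt$: here $|B_i|\le(\ell-1)+m(t-\ell+1)+m(i-1)\le(\ell-1)+mt\le 2mt$, using $i\le\ell$ and the bound $\ell\le t$ from Claim~\ref{CLAIM:ell<t+1}. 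I do not expect any genuine obstacle; the one conceptual remark worth highlighting is that the appended copies $S_i$ need not contain their anchors $v_i$---the anchor is used only to meet the hypothesis ``$B\subseteq V\setminus\{v\}$'' of Claim~\ref{CLAIM:F-avoid-Bi-B}.
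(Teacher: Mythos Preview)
Your proposal is correct and follows exactly the approach the paper intends: the paper states that the claim ``follows from a similar argument as in the last paragraph of the proof of Theorem~\ref{THEOREM:main-techincal}'', and your contradiction argument---starting from $t-\ell+1$ disjoint copies in $\mathcal{H}_0$ and then greedily appending $\ell$ further copies via Claim~\ref{CLAIM:F-avoid-Bi-B} anchored at the heavy vertices---is precisely that argument. Your bookkeeping for $v_i\notin B_i$ and $|B_i|\le 2mt$ is accurate.
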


    If $\ell=t$, then Claim~\ref{CLAIM:v(F,H_0)} implies that $\mathcal{H}_0$ is $F$-free.
    Therefore, it follows from 
    \begin{align*}
        |\mathcal{H}_0| 
        \ge |\mathcal{H}| - \left(\binom{n}{r}-\binom{n-t}{r}\right)
        = \mathrm{ex}(n-t, F)
    \end{align*}
    that $\mathcal{H}_0 \in \mathrm{EX}(n-t, F)$ and $d(v) = \binom{n-1}{r-1}$ for all $v\in L$,
    which implies that $\mathcal{H} = K_{t}^r\uproduct \mathcal{G}$ for some $\mathcal{G} \in \mathrm{EX}(n-t, F)$. 

    If $\ell\le t-1$, then it follows from $\Delta(\mathcal{H}_0) \le d(n-t, F) + \frac{1-\pi(F)}{2m} \binom{n-t}{r-1}$, $\nu(F, \mathcal{H}_0) < t-\ell+1$, and Lemma~\ref{LEMMA:Case1-small-maxdeg} that 
    \begin{align*}
        |\mathcal{H}_0| < \mathrm{ex}(n-t, F) + (t-\ell)\binom{n-t}{r-1}. 
    \end{align*}
    Consequently, 
    \begin{align*}
    |\mathcal{H}| 
    \le  |\mathcal{H}_0| + \binom{n}{r}-\binom{n-\ell}{r} 
    & < \mathrm{ex}(n-t, F) + (t-\ell)\binom{n-t}{r-1} + \binom{n}{r}-\binom{n-\ell}{r} \\
    & \le \mathrm{ex}(n-t, F) + \binom{n}{r}-\binom{n-t}{r}, 
    \end{align*}
    a contradiction. 
\end{proof}

\section{Proofs of Theorems~\ref{THM:Turan-pair-smooth} and~\ref{THM:vertex-extend-bounded}}\label{SEC:Proof-applications}
In this section, we prove Theorems~\ref{THM:Turan-pair-smooth} and~\ref{THM:vertex-extend-bounded}. 
Before that, let us introduce some definitions and prove some preliminary results. 
\subsection{Preliminaries}\label{SUBSEC:Pattern-prelim}
%
%
The following fact concerning $\delta(n,F)$ for all hypergraphs $F$. 

\begin{fact}\label{FACT:min-degree-extremal}
Let $F$ be an $r$-graph and $n\ge 1$ be an integer. 
Then every maximum $n$-vertex $F$-free $r$-graph $\mathcal{H}$ satisfies $\delta(\mathcal{H}) \ge \delta(n,F)$. In particular, $d(n,F) \ge \delta(n,F)$. 
\end{fact}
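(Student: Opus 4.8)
The plan is a one-step vertex-deletion argument. First I would fix a maximum $n$-vertex $F$-free $r$-graph $\mathcal{H}$, so that $|\mathcal{H}| = \mathrm{ex}(n,F)$, and suppose for contradiction that some vertex $v \in V(\mathcal{H})$ satisfies $d_{\mathcal{H}}(v) < \delta(n,F) = \mathrm{ex}(n,F) - \mathrm{ex}(n-1,F)$. (The boundary case $n=1$ is vacuous, since with the convention $\mathrm{ex}(0,F)=0$ we have $\delta(1,F)=0 \le \delta(\mathcal{H})$ trivially; so I may assume $n \ge 2$.)

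Next I would observe that $\mathcal{H}-v$ is an $(n-1)$-vertex $r$-graph which is again $F$-free, since every subgraph of an $F$-free $r$-graph is $F$-free; hence by the definition of the Tur\'an number $|\mathcal{H}-v| \le \mathrm{ex}(n-1,F)$. On the other hand, deleting $v$ removes exactly $d_{\mathcal{H}}(v)$ edges, so $|\mathcal{H}-v| = \mathrm{ex}(n,F) - d_{\mathcal{H}}(v) > \mathrm{ex}(n,F) - \delta(n,F) = \mathrm{ex}(n-1,F)$, which contradicts the previous inequality. Therefore no such $v$ exists, i.e. $\delta(\mathcal{H}) \ge \delta(n,F)$.

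For the ``in particular'' clause I would simply note that the average degree of $\mathcal{H}$ equals $d(\mathcal{H}) = r|\mathcal{H}|/n = r\,\mathrm{ex}(n,F)/n = d(n,F)$, and that the average degree of any $r$-graph is at least its minimum degree; combining this with the inequality just proved gives $d(n,F) = d(\mathcal{H}) \ge \delta(\mathcal{H}) \ge \delta(n,F)$.

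There is essentially no genuine obstacle here: the argument is elementary, and the only point requiring a moment's care is the degenerate case $n=1$ together with the implicit convention $\mathrm{ex}(0,F)=0$, which is dispatched in a single line as above.
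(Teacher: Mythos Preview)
Your proof is correct and follows essentially the same vertex-deletion argument as the paper: delete a minimum-degree vertex, use that the resulting $(n-1)$-vertex subgraph is $F$-free to bound its size by $\mathrm{ex}(n-1,F)$, and compare. The only cosmetic differences are that you phrase it by contradiction rather than as a direct chain of inequalities, and you handle the trivial $n=1$ case and the ``in particular'' clause more explicitly than the paper does.
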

\begin{proof}
    Let $v\in V(\mathcal{H})$ be a vertex with minimum degree and let $\mathcal{H}'$ be the induced subgraph of $\mathcal{H}$ on $V(\mathcal{H})\setminus \{v\}$.
    Since $\mathcal{H}'$ is an $(n-1)$-vertex $F$-free $r$-graph, we have 
    $|\mathcal{H}'| \le \mathrm{ex}(n-1, F)$. 
    On the other hand, since $\mathcal{H}$ is a maximum $n$-vertex $F$-free $r$-graph, we have $\mathrm{ex}(n,F) = |\mathcal{H}|$. 
    Therefore, 
    \begin{align*}
        \delta(n,F)
        = \mathrm{ex}(n,F) - \mathrm{ex}(n-1,F)
        \le |\mathcal{H}| - |\mathcal{H}'|
        = d_{\mathcal{H}}(v)
        = \delta(\mathcal{H}), 
    \end{align*}
    which proves Fact~\ref{FACT:min-degree-extremal}. 
\end{proof}

For Tur\'{a}n pairs $(F,P)$ we have the following fact which provides a lower bound for $\delta(n,F)$. 

\begin{fact}\label{FACT:(F,P)-Turan-Pair}
    Suppose that $(F,P)$ is a Tur\'{a}n pair and $\mathcal{H}$ is a maximum $F$-free $r$-graph on  $n-1$ vertices. Then $\delta(n,F) \ge \Delta(\mathcal{H})$. 
    In particular, $\delta(n,F) \ge d(n-1,F)$. 
\end{fact}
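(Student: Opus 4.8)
The plan is to prove the inequality $\delta(n,F)=\mathrm{ex}(n,F)-\mathrm{ex}(n-1,F)\ge \Delta(\mathcal{H})$ by exhibiting an $F$-free $r$-graph on $n$ vertices obtained from $\mathcal{H}$ by adding one new vertex of degree at least $\Delta(\mathcal{H})$; the "in particular" statement then drops out because $\Delta(\mathcal{H})\ge d(\mathcal{H})=d(n-1,F)$.

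First I would invoke the Turán pair hypothesis. Since $\mathcal{H}$ is a maximum $F$-free $r$-graph on $n-1$ vertices, $\mathcal{H}\in\mathrm{EX}(n-1,F)$, so by definition of a Turán pair $\mathcal{H}$ is a $P$-construction: fix $P=(k,E)$ and a partition $V(\mathcal{H})=V_1\cup\cdots\cup V_k$ with $\mathcal{H}=\blow{E}{V_1,\ldots,V_k}$. Pick a vertex $u$ with $d_{\mathcal{H}}(u)=\Delta(\mathcal{H})$, say $u\in V_i$. Now form $\mathcal{H}'$ on $n$ vertices by cloning $u$, i.e. add a fresh vertex $v$ to the part $V_i$ and set $\mathcal{H}':=\blow{E}{V_1,\ldots,V_{i-1},V_i\cup\{v\},V_{i+1},\ldots,V_k}$. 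This is again a $P$-construction, hence $F$-free, so $\mathrm{ex}(n,F)\ge|\mathcal{H}'|$. The one point that needs checking is that $d_{\mathcal{H}'}(v)\ge d_{\mathcal{H}}(u)$: for each edge $e\in\mathcal{H}$ through $u$, the set $(e\setminus\{u\})\cup\{v\}$ has the same profile as $e$ with respect to the new partition, since $u$ and $v$ lie in the same part; hence it is an edge of $\mathcal{H}'$ through $v$, and $e\mapsto(e\setminus\{u\})\cup\{v\}$ is an injection. Because $\mathcal{H}\subseteq\mathcal{H}'$ and no edge of $\mathcal{H}'$ containing $v$ lies in $\mathcal{H}$, we get $|\mathcal{H}'|=|\mathcal{H}|+d_{\mathcal{H}'}(v)\ge \mathrm{ex}(n-1,F)+\Delta(\mathcal{H})$, whence $\delta(n,F)\ge\Delta(\mathcal{H})$.

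For the final clause, $\Delta(\mathcal{H})\ge d(\mathcal{H})=\tfrac{r|\mathcal{H}|}{n-1}=\tfrac{r\,\mathrm{ex}(n-1,F)}{n-1}=d(n-1,F)$, so $\delta(n,F)\ge d(n-1,F)$. There is no genuine obstacle here; the only step requiring care is the profile-preservation claim in the cloning argument, which amounts to the trivial observation that the profile of an $r$-set depends only on how many of its elements fall into each part, so swapping $u$ for a fresh vertex of the same part changes nothing. (As usual, this implicitly uses the Turán pair characterization of extremal constructions in the relevant range of $n$.)
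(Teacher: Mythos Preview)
Your proof is correct and follows essentially the same approach as the paper: clone a maximum-degree vertex of the extremal $P$-construction $\mathcal{H}$ to obtain an $n$-vertex $P$-construction (hence $F$-free), and compare sizes. The only cosmetic difference is that the paper adds to $\mathcal{H}$ precisely the edges $\{u\}\cup S$ for $S$ in the link of the duplicated vertex, whereas you take the full blowup $\blow{E}{V_1,\ldots,V_i\cup\{v\},\ldots,V_k}$ and use the injection $e\mapsto (e\setminus\{u\})\cup\{v\}$ to get $d_{\mathcal{H}'}(v)\ge d_{\mathcal{H}}(u)$; your version is arguably a touch cleaner when $E$ contains multisets with a repeated index $i$, but the argument is the same.
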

\begin{proof}
    First, notice that $|\mathcal{H}| = \mathrm{ex}(n-1, F)$. 
    On the other hand, it follows from the definition of Tur\'{a}n pair that $\mathcal{H}$ is an $(n-1)$-vertex $P$-construction. 
    Let $\tilde{\mathcal{H}}$ be an $n$-vertex $P$-construction obtained from $\mathcal{H}$ by duplicating a vertex $v\in V(\mathcal{H})$ with maximum degree. 
    In other words, $\tilde{\mathcal{H}}$ is obtained from $\mathcal{H}$ by adding a new vertex $u$ and adding all edges in $\left\{\{u\} \cup S \colon S \in L_{\mathcal{H}}(v)\right\}$. 
    It is clear that $\tilde{\mathcal{H}}$ is an $n$-vertex $P$-construction, and hence, $\tilde{\mathcal{H}}$ is $F$-free.  
    So $|\tilde{\mathcal{H}}| \le \mathrm{ex}(n,F)$. 
    It follows that 
    \begin{align*}
        \delta(n,F) = \mathrm{ex}(n,F) - \mathrm{ex}(n-1,F)
        \ge |\tilde{\mathcal{H}}| - |\mathcal{H}| 
        = d_{\mathcal{H}}(v)
        = \Delta(\mathcal{H})
        \ge d(\mathcal{H})
        \ge d(n-1,F), 
    \end{align*}
    which proves Fact~\ref{FACT:(F,P)-Turan-Pair}. 
\end{proof}

The proof for the following fact can be found in~{\cite[Lemma~4.2]{LMR1}} (with some minor modifications). 

\begin{fact}\label{FACT:small-deg-set-Z}
    Let $F$ be an $r$-graph and let $\mathcal{H}$ be an $n$-vertex $F$-free $r$-graph.  If $n$ is large, $\varepsilon >0$ is small, and $|\mathcal{H}| \ge \left(\pi(F) - \varepsilon\right)\binom{n}{r}$, then 
    \begin{enumerate}[label=(\alph*)]
    \item 
    the set 
    \begin{align*}
        Z_{\varepsilon}(\mathcal{H}) := \left\{v\in V(\mathcal{H}) \colon d_{\mathcal{H}}(v) \le \left(\pi(F)-2\varepsilon^{1/2}\right)\binom{n-1}{r-1}\right\}
    \end{align*}
    has size at most $\varepsilon^{1/2}n$, and 
    \item 
    the induced subgraph $\mathcal{H}'$ of $\mathcal{H}$ on $V(\mathcal{H})\setminus Z_{\varepsilon}(\mathcal{H})$ satisfies $\delta(\mathcal{H}') \ge \left(\pi(F)-3\varepsilon^{1/2}\right)\binom{n-1}{r-1}$. 
    \end{enumerate}    
\end{fact}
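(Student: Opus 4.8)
The plan is to establish part~(a) first and then deduce part~(b) from it. Throughout write $\pi:=\pi(F)$, and abbreviate $Z:=Z_{\varepsilon}(\mathcal{H})$, $z:=|Z|$.

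For~(a) I argue by contradiction: suppose $z>\varepsilon^{1/2}n$. Fix a subset $Z'\subseteq Z$ of size $z':=\lceil\varepsilon^{1/2}n/r\rceil$; this is possible since $z'<\varepsilon^{1/2}n<z$ for $n$ large, and it is chosen small enough that $n-z'\ge(1-\varepsilon^{1/2})n$ is still large. Deleting $Z'$ destroys only edges that meet $Z'$, so
\[
\bigl|\mathcal{H}[V(\mathcal{H})\setminus Z']\bigr|\;\ge\;|\mathcal{H}|-\sum_{v\in Z'}d_{\mathcal{H}}(v)\;\ge\;(\pi-\varepsilon)\binom{n}{r}-z'\bigl(\pi-2\varepsilon^{1/2}\bigr)\binom{n-1}{r-1},
\]
using $|\mathcal{H}|\ge(\pi-\varepsilon)\binom{n}{r}$ and the definition of $Z$. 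On the other hand $\mathcal{H}[V(\mathcal{H})\setminus Z']$ is $F$-free on $n-z'$ vertices, so by Proposition~\ref{PROP:KNS} (the ratio $\mathrm{ex}(m,F)/\binom{m}{r}$ is non-increasing and tends to $\pi$) its number of edges is at most $(\pi+o(1))\binom{n-z'}{r}$ for $n$ large. Comparing the two bounds and expanding $\binom{n}{r}-\binom{n-z'}{r}=\sum_{j=n-z'}^{n-1}\binom{j}{r-1}$, one finds that the lower bound strictly exceeds the upper bound: the leading terms cancel, and the surplus coming from the $2\varepsilon^{1/2}$ slack in the threshold dominates the lower-order binomial corrections (this uses $\pi<1$). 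That is the desired contradiction, and it forces $z\le\varepsilon^{1/2}n$.

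For~(b), fix $v\in V(\mathcal{H})\setminus Z$ and let $\mathcal{H}'$ denote the induced subgraph of $\mathcal{H}$ on $V(\mathcal{H})\setminus Z$. Each edge through $v$ that is lost in passing to $\mathcal{H}'$ must contain a vertex of $Z$, and there are at most $\binom{n-1}{r-1}-\binom{n-1-z}{r-1}$ such edges, so
\[
d_{\mathcal{H}'}(v)\;\ge\;d_{\mathcal{H}}(v)-\Bigl(\binom{n-1}{r-1}-\binom{n-1-z}{r-1}\Bigr)\;>\;\bigl(\pi-2\varepsilon^{1/2}\bigr)\binom{n-1}{r-1}-\Bigl(\binom{n-1}{r-1}-\binom{n-1-z}{r-1}\Bigr),
\]
the last step using $v\notin Z$. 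By part~(a) we have $z\le\varepsilon^{1/2}n$, so Lemma~\ref{LEMMA:Binomal-Inequ} (applied with $n-1$ and $z$ in place of $n$ and $m$, valid since $\varepsilon$ is small) gives $\binom{n-1}{r-1}-\binom{n-1-z}{r-1}\le 2z\binom{n-1-z}{r-2}=O(\varepsilon^{1/2})\binom{n-1}{r-1}$. Absorbing this error into the $\varepsilon^{1/2}$ term, for $\varepsilon$ small and $n$ large, yields $d_{\mathcal{H}'}(v)\ge(\pi-3\varepsilon^{1/2})\binom{n-1}{r-1}$, which is the claim.

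The main obstacle is the quantitative comparison in part~(a): one must verify that the lower bound $(\pi-\varepsilon)\binom{n}{r}-z'(\pi-2\varepsilon^{1/2})\binom{n-1}{r-1}$ really does beat the density ceiling $(\pi+o(1))\binom{n-z'}{r}$. This is exactly why the threshold defining $Z$ sits a full $2\varepsilon^{1/2}$ (rather than $\varepsilon$) below $\pi$, why one must retain the second-order term in $\binom{n}{r}-\binom{n-z'}{r}$ instead of the lossy estimate $\binom{n}{r}-\binom{n-z'}{r}\ge z'\binom{n-z'}{r-1}$, and why one deletes only $\Theta(\varepsilon^{1/2}n)$ vertices (a subset of $Z$), so that $n-z'$ stays large enough for the asymptotic density bound to apply. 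Part~(b) is then a routine binomial estimate. Altogether this is essentially the argument of~\cite[Lemma~4.2]{LMR1}, with the explicit constants adjusted.
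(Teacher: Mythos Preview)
The paper does not prove this fact; it only cites~\cite[Lemma~4.2]{LMR1}. Your argument is the standard one and is essentially correct. Part~(a) is fine: deleting a subset $Z'\subseteq Z$ of size $\Theta(\varepsilon^{1/2}n)$ and comparing the resulting edge count against $\mathrm{ex}(n-z',F)$ gives the contradiction, and your choice $z'\approx\varepsilon^{1/2}n/r$ is exactly what makes the surplus term $2z'\varepsilon^{1/2}\binom{n-1}{r-1}\approx 2\varepsilon\binom{n}{r}$ beat the deficit $\varepsilon\binom{n}{r}$.

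There is, however, a genuine slip in part~(b). Your bound
\[
\binom{n-1}{r-1}-\binom{n-1-z}{r-1}\ \le\ 2z\binom{n-1-z}{r-2}\ \le\ 2\varepsilon^{1/2}n\binom{n-2}{r-2}
\]
is of order $(r-1)\varepsilon^{1/2}\binom{n-1}{r-1}$, since $n\binom{n-2}{r-2}=\frac{n(r-1)}{n-1}\binom{n-1}{r-1}$. The implicit constant here is $r$-dependent and does \emph{not} shrink as $\varepsilon\to 0$ or $n\to\infty$, so the sentence ``absorbing this error into the $\varepsilon^{1/2}$ term'' is not justified: for $r\ge 3$ your argument yields only
\[
\delta(\mathcal{H}')\ \ge\ \bigl(\pi(F)-(r+1)\varepsilon^{1/2}\bigr)\binom{n-1}{r-1},
\]
not the stated constant $3$. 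This is harmless for the paper's purposes --- in the only place Fact~\ref{FACT:small-deg-set-Z} is invoked (the proof of Theorem~\ref{THM:vertex-extend-bounded}) any fixed constant multiple of $\varepsilon^{1/2}$ suffices --- but as written your proof does not establish the claim with the constant $3$ when $r\ge 3$. You should either replace $3$ by a constant depending on $r$, or note explicitly that the constant $3$ is achieved only for $r=2$.
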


\subsection{Proofs of Theorems~\ref{THM:Turan-pair-smooth} and~\ref{THM:vertex-extend-bounded}}
We prove  Theorem~\ref{THM:Turan-pair-smooth} first. 

\begin{proof}[Proof of Theorem~\ref{THM:Turan-pair-smooth}]
    Fix an integer $n \ge 1$. Then 
    \begin{align*}
        |\delta(n,F) - d(n-1,F)|
        \overset{\text{Fact~\ref{FACT:(F,P)-Turan-Pair}}}&{=} \delta(n,F) - d(n-1,F) \\
        \overset{\text{Fact~\ref{FACT:min-degree-extremal}}}&{\le} d(n,F) - d(n-1,F)
        \overset{\text{Lemma~\ref{LEMMA:smooth-degree}}}{\le} 4\binom{n-1}{r-2},  
    \end{align*}
    which proves Theorem~\ref{THM:Turan-pair-smooth}. 
\end{proof}

Next we prove Theorem~\ref{THM:vertex-extend-bounded}. 

\begin{proof}[Proof of Theorem~\ref{THM:vertex-extend-bounded}]
    Fix constants $0 < \varepsilon \ll \varepsilon_1 \ll 1$ and let $n \in \mathbb{N}$ be sufficiently large. 
    Suppose to the contrary that there exists an $n$-vertex $F$-free $r$-graph $\mathcal{H}$ with $d(\mathcal{H}) \ge d(n,F) - \varepsilon\binom{n-1}{r-1}$ and $\Delta(\mathcal{H}) \ge d(n,F)+ \frac{1-\pi(F)}{8m}\binom{n-1}{r-1}$. Let $V = V(\mathcal{H})$. 
    Fix a vertex $v\in V$ with $d_{\mathcal{H}}(v) = \Delta(\mathcal{H})$. 
    Let $V_0 = V\setminus \{v\}$ and $\mathcal{H}_0 = \mathcal{H}[V_0]$. 
    Since 
    \begin{align*}
        |\mathcal{H}_0| 
        \ge |\mathcal{H}| -\binom{n-1}{r-1}
        \ge \mathrm{ex}(n,F) - 2\varepsilon\binom{n}{r}, 
    \end{align*}
    it follows from the edge-stability of $F$ that $\mathcal{H}_0$ contains a subgraph $\mathcal{H}_1$ with at least $\mathrm{ex}(n,F) - \varepsilon_1\binom{n}{r} \ge \left(\pi(F)-\varepsilon_1\right)\binom{n}{r}$ edges, 
    and moreover, $\mathcal{H}_1$ is a $P$-subconstruction. 
    
    It follows from Fact~\ref{FACT:small-deg-set-Z} that the set 
    \begin{align*}
        Z := \left\{v\in V \colon d_{\mathcal{H}_1}(v) \le \left(\pi(F)-2\varepsilon_1^{1/2}\right)\binom{n-1}{r-1}\right\}
    \end{align*}
    has size at most $\varepsilon_1^{1/2} n$, and moreover, the $r$-graph $\mathcal{H}_2 := \mathcal{H}_1[V_0\setminus Z]$ satisfies $\delta(\mathcal{H}_2) \ge \left(\pi(F)-3\varepsilon_1^{1/2}\right)\binom{n-1}{r-1}$. 
    Note that $\mathcal{H}_2 \subset \mathcal{H}_1$ is also a $P$-subconstruction. 

    Define $\mathcal{H}_3 := \mathcal{H}_2 \cup \left\{e \in \mathcal{H}[V\setminus Z] \colon v\in e\right\}$. 
    Since $|Z| \le \varepsilon_1^{1/2}n \le \frac{1-\pi(F)}{72m}\frac{n}{r}$, we have 
    \begin{align*}
        d_{\mathcal{H}_3}(v)
        \ge d_{\mathcal{H}}(v) - |Z|\binom{n-2}{r-2}
        & \ge d(n,F)+ \frac{1-\pi(F)}{8m}\binom{n-1}{r-1} - \frac{1-\pi(F)}{72m}\frac{n}{r}\binom{n-2}{r-2} \\
        & \ge d(n,F)+ \frac{1-\pi(F)}{8m}\binom{n-1}{r-1} - \frac{1-\pi(F)}{72m}\binom{n-1}{r-1} \\
        & \ge \left(\pi(F) + \frac{1-\pi(F)}{9m}\right) \binom{n-1}{r-1}. 
    \end{align*}
    Let $n' = |V\setminus Z|$. 
    Note that $\mathcal{H}_3$ is an $F$-free $r$-graph on $n'$ vertices with $\delta(\mathcal{H}_3) \ge \delta(\mathcal{H}_2) \ge \left(\pi(F)-3\varepsilon_1^{1/2}\right)\binom{n-1}{r-1}$, and 
    $v\in V(\mathcal{H}_3)$ is a vertex such that $\mathcal{H}_3 - v = \mathcal{H}_2$ is a $P$-subconstruction. 
    However, this contradicts the weak vertex-extendability of $F$ since $\varepsilon_1$ is sufficiently small and $d_{\mathcal{H}_3}(v) \ge \left(\pi(F) + \frac{1-\pi(F)}{9m}\right) \binom{n-1}{r-1}$. 
\end{proof}

\section{Proof of Theorem~\ref{THM:Hypergraph-Book-3}}\label{SEC:proof-3book}
The edge-stability of $\mathbb{F}_{3,2}$ was already proved in~{\cite[Theorem~2.2]{FPS053Book3page}}, 
so by Theorems~\ref{THM:Main-simple},~\ref{THM:Turan-pair-smooth}, and~\ref{THM:vertex-extend-bounded}, to prove Theorem~\ref{THM:Hypergraph-Book-3} it suffices to prove the following result. 

\begin{theorem}\label{THM:weak-vte-extend-F32}
    The $3$-graph $\mathbb{F}_{3,2}$ is weakly vertex-extendable with respect to the pattern $S_3 :=\left(2, \{1,2,2\}\right)$. 
\end{theorem}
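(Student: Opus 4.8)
Recall that $\pi(\mathbb{F}_{3,2})=\lambda(S_3)=4/9$. Fix $\delta>0$ and let $\mathcal{H}$ be an $\mathbb{F}_{3,2}$-free $3$-graph on $n$ vertices with $\delta(\mathcal{H})\ge(4/9-\zeta)\binom{n-1}{2}$ such that $\mathcal{H}-v$ is a subgraph of a semibipartite $3$-graph on $V_0:=V(\mathcal{H})\setminus\{v\}$; fix a partition $V_0=A\cup B$ for which every edge of $\mathcal{H}-v$ has exactly one vertex in $A$. Writing $L:=L_{\mathcal{H}}(v)$ for the link graph of $v$ on $V_0$, so that $d_{\mathcal{H}}(v)=e(L)$, I split $e(L)=e_{AA}+e_{AB}+e_{BB}$ according to whether an edge of $L$ lies inside $A$, crosses between $A$ and $B$, or lies inside $B$. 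The aim is to show $e(L)\le(4/9+\delta)\binom{n-1}{2}$ once $\zeta$ is small (depending on $\delta$) and $n$ is large; below, $o_\zeta(1)$ denotes a quantity tending to $0$ as $\zeta\to0$ and $n\to\infty$.

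The first step is to pin down the partition. For $a\in A$ the link $L_a$ of $a$ in $\mathcal{H}-v$ is a graph on $B$, so $d_{\mathcal{H}-v}(a)=e(L_a)\le\binom{|B|}{2}$, while $d_{\mathcal{H}-v}(b)\le|A|(|B|-1)$ for $b\in B$; on the other hand $d_{\mathcal{H}-v}(w)\ge\delta(\mathcal{H})-(n-2)\ge(4/9-\zeta-o(1))\binom{n-1}{2}$ for every $w\in V_0$, since $w$ lies in at most $n-2$ edges through $v$. Combining these with the degree hypothesis forces $|B|\ge(2/3-o_\zeta(1))n$ and $|A||B|\ge(2/9-o_\zeta(1))n^2$, and since $|A|+|B|=n-1$ this sandwiches $|A|=(1/3+o_\zeta(1))n$, $|B|=(2/3+o_\zeta(1))n$. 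Then $|\mathcal{H}-v|\ge\frac{n-1}{3}\delta(\mathcal{H}-v)$ is within $o_\zeta(1)n^3$ of its a priori maximum $|A|\binom{|B|}{2}$, so $\mathcal{H}-v$ omits only $o_\zeta(1)n^3$ of the triples with exactly one vertex in $A$. I then extract two consequences: (a) each $L_a$ is complete on $B$ up to $o_\zeta(1)n^2$ missing edges, hence $\alpha^{*}:=\max_{a\in A}\alpha(L_a)=o_\zeta(1)n$; and (b) all but $o_\zeta(1)n^2$ of the pairs inside $B$, and all but $o_\zeta(1)n^2$ of the pairs in $A\times B$, have codegree at least $(1-o_\zeta(1))|A|$, respectively $(1-o_\zeta(1))|B|$, in $\mathcal{H}-v$.

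Next I translate $\mathbb{F}_{3,2}$-freeness into constraints on $L$. Since a copy of $\mathbb{F}_{3,2}$ is exactly a pair whose co-neighbourhood contains an edge, and $\mathcal{H}-v$ is semibipartite, one gets: (I) $N_L(w)$ is independent in $\mathcal{H}-v$ for every $w\in V_0$; (II) $N_{\mathcal{H}-v}(\{b_1,b_2\})\subseteq A$ is independent in $L[A]$ for every edge $b_1b_2$ of $L[B]$; (III) $N_{\mathcal{H}-v}(\{a_0,b_0\})\subseteq B$ is independent in $L[B]$ for every edge $a_0b_0$ of $L$ with $a_0\in A$, $b_0\in B$. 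From (I): any vertex with an $L$-neighbour in $A$ has at most $\alpha^{*}$ $L$-neighbours in $B$. Letting $A'\subseteq A$ be the set of vertices incident to an edge of $L[A]$, we get $e_{AA}+e_{AB}\le\binom{|A'|}{2}+|A'|\alpha^{*}+(|A|-|A'|)|B|$, which is decreasing in $|A'|\in[0,|A|]$ because $|A|+\alpha^{*}<|B|$; hence $e_{AA}+e_{AB}\le|A||B|+o_\zeta(1)n^2=(2/9+o_\zeta(1))n^2$. Partitioning $B$ analogously by whether a vertex touches an $L$-edge with an endpoint in $A$, and using $|A|-|B|/2=o_\zeta(1)n$, gives $e_{AB}+e_{BB}\le(2/9+o_\zeta(1))n^2$. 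Finally, from (II) and (b): if $e_{BB}\ge\varepsilon n^2$ then some edge of $L[B]$ has codegree $(1-o_\zeta(1))|A|$, forcing an independent set of that size in $L[A]$, so $e_{AA}\le o_{\zeta,\varepsilon}(1)n^2$; symmetrically, from (III) and (b), $e_{AB}\ge\varepsilon n^2$ forces $e_{BB}\le o_{\zeta,\varepsilon}(1)n^2$.

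I conclude by a short case analysis with $\varepsilon$ a fixed small absolute constant (say $\varepsilon=1/20$) and $\zeta$ chosen small in terms of $\delta$. If $e_{AB}\ge\varepsilon n^2$, then $e_{BB}=o_{\zeta,\varepsilon}(1)n^2$ and $e(L)=(e_{AA}+e_{AB})+e_{BB}\le(2/9+o_\zeta(1))n^2$. If $e_{AB}<\varepsilon n^2\le e_{BB}$, then $e_{AA}=o_{\zeta,\varepsilon}(1)n^2$ and $e(L)=e_{AA}+(e_{AB}+e_{BB})\le(2/9+o_\zeta(1))n^2$. If $e_{AB}<\varepsilon n^2$ and $e_{BB}<\varepsilon n^2$, then $e(L)\le\binom{|A|}{2}+2\varepsilon n^2=(1/9+4\varepsilon+o_\zeta(1))\binom{n-1}{2}$. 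Since $(2/9)n^2=(4/9+o(1))\binom{n-1}{2}$ and $1/9+4\varepsilon<4/9$, in every case $d_{\mathcal{H}}(v)=e(L)\le(4/9+\delta)\binom{n-1}{2}$ for $\zeta$ small and $n$ large, which is what weak vertex-extendability requires. The step I expect to be the main obstacle is the first one: squeezing the near-balanced partition $|A|\approx n/3$, $|B|\approx2n/3$ and the near-completeness statements (a), (b) out of the minimum-degree hypothesis alone — everything downstream hinges on $|A|-|B|/2$ and the relevant independence numbers all being $o(n)$ — while the remaining estimates are routine double counting.
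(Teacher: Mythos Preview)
Your argument is correct and complete. The route you take is genuinely different from the paper's, so a brief comparison is worthwhile.

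The paper argues by contradiction: assuming $d_{\mathcal H}(v)>(4/9+\delta)\binom{n-1}{2}$, it locates an explicit copy of $\mathbb F_{3,2}$ through a short sequence of claims. First it shows $|L_2|\ge \frac{\delta}{8}n^2$ (otherwise a high-$L$-degree vertex $w\in V_1$ exists, and one finds $u\in N_L(w)\cap V_1$ and a pair $ab\in L_{\mathcal H}(u)$ inside $N_L(w)\cap V_2$), then $L_1=\emptyset$ (otherwise any $uw\in L_1$ together with a random $ab\in L_2$ works, by a union bound), and finally, with $L_1=\emptyset$, a counting argument produces a vertex $w\in V_2$ with many $L$-neighbours in both $V_1$ and $V_2$, from which $\mathbb F_{3,2}$ is read off. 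Thus the paper never bounds $e(L)$ directly; it only ever exhibits the forbidden configuration.

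You instead derive a direct upper bound on $e(L)$. Your constraint~(I), combined with the small independence numbers $\alpha(L_a)$, gives the two ``pairing'' inequalities $e_{AA}+e_{AB}\le (2/9+o_\zeta(1))n^2$ and $e_{AB}+e_{BB}\le (2/9+o_\zeta(1))n^2$; constraints~(II),~(III) combined with the codegree observation~(b) supply the cross-implications ($e_{BB}$ large $\Rightarrow e_{AA}$ small; $e_{AB}$ large $\Rightarrow e_{BB}$ small), and a three-case split finishes. This is a clean counting substitute for the explicit embeddings; it is a little more systematic and avoids the contradiction framing, at the cost of a small extra case analysis. Both approaches rest on the same first step (forcing $|A|\approx n/3$, $|B|\approx 2n/3$ and near-completeness of the semibipartite part from minimum degree alone), which is indeed the only place any real work is needed and which the paper handles by the same calculation, citing \cite{FPS053Book3page}.
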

%
%
\begin{proof}
    Fix $\delta >0$. Let $n$ be sufficiently large and $\zeta>0$ be sufficiently small. 
    Let $\mathcal{H}$ be an $n$-vertex $\mathbb{F}_{3,2}$-free $3$-graph with $\delta(\mathcal{H}) \ge \left(\frac{4}{9}-\zeta\right)\binom{n-1}{2}$. 
    Suppose that $v\in V$ is a vertex such that $\mathcal{H}_0 := \mathcal{H}-v$ is an $S_3$-subconstruction (i.e. semibipartite). 
    It suffices to show that $d_{\mathcal{H}}(v) \le \left(\frac{4}{9}+\delta\right)\binom{n-1}{2}$. 

    Suppose to the contrary that $d_{\mathcal{H}}(v) > \left(\frac{4}{9}+\delta\right)\binom{n-1}{2}$. 
    Let $V_1 \cup V_2$ be a bipartition of $V_0 := V\setminus \{v\}$ such that every edge in $\mathcal{H}_0$ contains exactly one vertex from $V_1$. 
    Since $|\mathcal{H}_0| \ge \frac{3}{n}\delta(\mathcal{H}) \ge \left(\frac{4}{9}-\zeta\right)\binom{n}{3}$, it follows from some simple calculations (see e.g.~{\cite[Theorem~2.2~(ii)]{FPS053Book3page}}) that 
    \begin{align}\label{equ:size-V1-V2-F32}
        \max\left\{\left||V_1|-\frac{n}{3}\right|, \left||V_2|-\frac{2n}{3}\right|\right\}
        \le \zeta^{1/2}n. 
    \end{align}
    Recall that the link of a vertex $u\in V(\mathcal{H})$ is defined as 
    \begin{align*}
        L_\mathcal{H}(u)
        :=\left\{A \in \binom{V(\mathcal{H})}{r-1}\colon A \cup \{u\}\in \mathcal{H}\right\}.
    \end{align*}
    Let $L = L_{\mathcal{H}}(v)$ for simplicity and let 
    \begin{align*}
        L_1:= L \cap \binom{V_1}{2}, \quad 
        L_2:= L \cap \binom{V_2}{2}, \quad\text{and}\quad
        L_{1,2}:= L \cap (V_1\times V_2). 
    \end{align*}
    Here we abuse the use of notation by letting $V_1\times V_2$ denote the edge set of the complete bipartite graph with parts $V_1$ and $V_2$.

\begin{figure}[htbp]
\centering
\subfigure
{
\begin{minipage}[t]{0.45\linewidth}
\centering
\tikzset{every picture/.style={line width=1pt}} 
\begin{tikzpicture}[x=0.75pt,y=0.75pt,yscale=-1,xscale=1,line join=round]

\draw   (383.91,183.99) .. controls (367.62,183.91) and (354.56,155.19) .. (354.74,119.84) .. controls (354.92,84.5) and (368.27,55.91) .. (384.56,55.99) .. controls (400.85,56.08) and (413.91,84.8) .. (413.73,120.14) .. controls (413.55,155.49) and (400.2,184.08) .. (383.91,183.99) -- cycle ;
Shape: Ellipse [id:dp8322898203147031] 
\draw   (495.87,235) .. controls (474.78,235.02) and (457.66,189.95) .. (457.63,134.35) .. controls (457.6,78.75) and (474.68,33.66) .. (495.77,33.65) .. controls (516.86,33.64) and (533.98,78.71) .. (534.01,134.31) .. controls (534.04,189.91) and (516.97,234.99) .. (495.87,235) -- cycle ;
\draw   [fill={rgb:red,1;green,0;blue,0}, fill opacity=0.5]  (503.35,133.8) .. controls (489,151) and (488.94,166.01) .. (501.13,178.23) .. controls (477,157) and (430.92,162.98) .. (384,148) .. controls (434,156) and (475,142) .. (503.35,133.8) --cycle;
\draw  [fill=uuuuuu, fill opacity=0.5]  (378.08,103.02) .. controls (397,137) and (400,179) .. (381,217) .. controls (442,149) and (469,141) ..  (503.35,133.8) .. controls (490.13,134.64) and (418,131) .. (378.08,103.02) -- cycle;

\draw  [fill=uuuuuu, fill opacity=0.5]  (378.08,103.02) .. controls (397,137) and (400,179) .. (381,217) .. controls (421,182) and (470,186) .. (501.13,178.23)  .. controls (470,182) and (413,163) .. (378.08,103.02) -- cycle;

\draw [fill=uuuuuu, fill opacity=0.5]   (378.08,103.02) .. controls (366,123) and (371.81,135.78) .. (384,148) .. controls (358,166) and (363,196) .. (381,217) .. controls (350,199) and (352.08,121.02) .. (378.08,103.02) -- cycle;

\draw [fill=uuuuuu] (378.08,103.02) circle (2pt);
\draw [fill=uuuuuu] (381,217) circle (2pt);
\draw [fill=uuuuuu]  (501.13,178.23) circle (2pt);
\draw [fill=uuuuuu] (503.35,133.8) circle (2pt);
\draw [fill=uuuuuu] (384,148)  circle (2pt);
\draw (375,225) node [anchor=north west][inner sep=0.75pt]   [align=left] {$v$};
\draw (378,155) node [anchor=north west][inner sep=0.75pt]   [align=left] {$u$};
\draw (375,87) node [anchor=north west][inner sep=0.75pt]   [align=left] {$w$};
\draw (503.13,181.23) node [anchor=north west][inner sep=0.75pt]   [align=left] {$a$};
\draw (507,126) node [anchor=north west][inner sep=0.75pt]   [align=left] {$b$};
\end{tikzpicture}
\end{minipage}
}
\subfigure
{
\begin{minipage}[t]{0.45\linewidth}
\centering
\tikzset{every picture/.style={line width=1pt}} 
\begin{tikzpicture}[x=0.75pt,y=0.75pt,yscale=-1,xscale=1,line join=round]

\draw   (156.91,176.99) .. controls (140.62,176.91) and (127.56,148.19) .. (127.74,112.84) .. controls (127.92,77.5) and (141.27,48.91) .. (157.56,48.99) .. controls (173.85,49.08) and (186.91,77.8) .. (186.73,113.14) .. controls (186.55,148.49) and (173.2,177.08) .. (156.91,176.99) -- cycle ;
\draw   (268.87,228) .. controls (247.78,228.02) and (230.66,182.95) .. (230.63,127.35) .. controls (230.6,71.75) and (247.68,26.66) .. (268.77,26.65) .. controls (289.86,26.64) and (306.98,71.71) .. (307.01,127.31) .. controls (307.04,182.91) and (289.97,227.99) .. (268.87,228) -- cycle ;
\draw  [fill=uuuuuu, fill opacity=0.5]  (276.35,126.8) .. controls (262,144) and (261.94,159.01) .. (274.13,171.23) .. controls (267.13,171.23) and (198,111) .. (151.08,96.02) .. controls (205,110) and (244,133) .. (276.35,126.8)  -- cycle;

\draw  [fill=uuuuuu, fill opacity=0.5]  (276.35,126.8) .. controls (262,144) and (261.94,159.01) .. (274.13,171.23)  .. controls (250,150) and (203.92,155.98) .. (157,141) .. controls (207,149) and (248,135) .. (276.35,126.8) --cycle;

\draw  [fill=uuuuuu, fill opacity=0.5]  (276.35,126.8) .. controls (262,144) and (261.94,159.01) .. (274.13,171.23)  .. controls (227,182) and (212,188)  .. (154,210) .. controls (205,176) and (248,159) .. (276.35,126.8) -- cycle;
\draw  [fill={rgb:red,1;green,0;blue,0}, fill opacity=0.5]  (151.08,96.02) .. controls (139,116) and (144.81,128.78) .. (157,141) .. controls (131,159) and (136,189) .. (154,210)  .. controls (123,192)  and (125.08,114.02).. (151.08,96.02);

\draw [fill=uuuuuu] (151.08,96.02) circle (2pt);
\draw [fill=uuuuuu] (157,141) circle (2pt);
\draw [fill=uuuuuu] (154,210) circle (2pt);
\draw [fill=uuuuuu] (276.35,126.8) circle (2pt);
\draw [fill=uuuuuu] (274.13,171.23) circle (2pt);
\draw (150,218) node [anchor=north west][inner sep=0.75pt]   [align=left] {$v$};
\draw (150,148) node [anchor=north west][inner sep=0.75pt]   [align=left] {$u$};
\draw (150,80) node [anchor=north west][inner sep=0.75pt]   [align=left] {$w$};
\draw (276.13,174.23) node [anchor=north west][inner sep=0.75pt]   [align=left] {$a$};
\draw (280,119) node [anchor=north west][inner sep=0.75pt]   [align=left] {$b$};
\end{tikzpicture}
\end{minipage}
}
\caption{Finding $\mathbb{F}_{3,2}$ in Claim~\ref{CLAIM:L2-large} (left) and Claim~\ref{CLAIM:L1-empty} (right).}
\label{fig:F32-weak}
\end{figure}
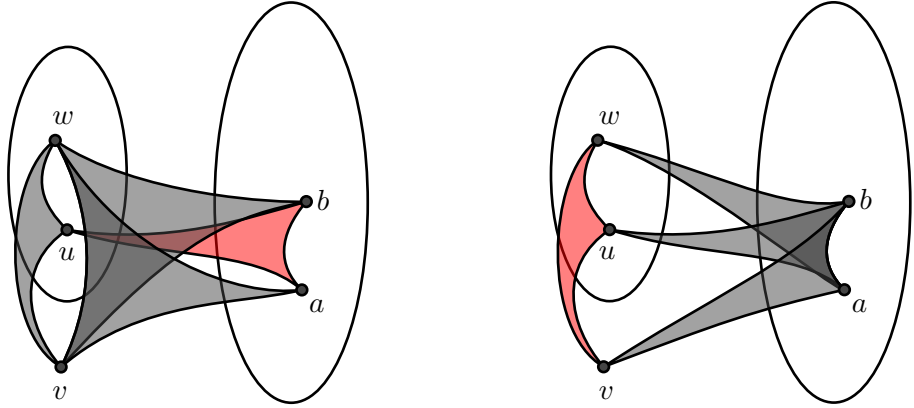
    \begin{claim}\label{CLAIM:L2-large}
        We have $|L_2| \ge \frac{\delta}{8}n^2$. 
    \end{claim}
    \begin{proof}
        Suppose to the contrary that $|L_{2}| \le \delta n^2/8$.
        Then it follows from the inequality
        \begin{align*}
            \sum_{v'\in V_1}d_{L}(v') 
            = 2|L_1|+|L_{1,2}|
            \ge |L| - |L_2|
            \ge \left(\frac{4}{9}+\delta\right)\binom{n-1}{2} - \frac{\delta}{8}n^2
            \ge \left(\frac{2}{9}+\frac{\delta}{4}\right)n^2
        \end{align*}
        that there exists a vertex $w\in V_1$ with 
        \begin{align*}
            d_{L}(w) 
            \ge \frac{\left(\frac{2}{9}+\frac{\delta}{4}\right)n^2}{\left(\frac{1}{3}+\zeta^{1/2}\right)n}
            \ge \left(\frac{2}{3}+\frac{\delta}{8}\right)n. 
        \end{align*}
        Therefore, by (\ref{equ:size-V1-V2-F32}),  we have 
        \begin{align*}
            \min\left\{|N_{L}(w) \cap V_1|, |N_{L}(w) \cap V_2|\right\} \ge \frac{\delta}{16}n. 
        \end{align*}
        Fix a vertex $u\in N_{L}(w) \cap V_1$ and let $V_2' = N_{L}(w) \cap V_2$. 
        Since 
        \begin{align}\label{equ:F32-001}
            \binom{|V_2|}{2} - d_{\mathcal{H}_0}(u) 
            \le \binom{\left(\frac{2}{3}+\zeta^{1/2}\right)n}{2} - \left(\frac{4}{9}-2\zeta\right)\binom{n-1}{2}
            < \binom{\delta n/16}{2}, 
        \end{align}
        there exists an edge $ab\in L_{\mathcal{H}}(u)\cap \binom{V_2'}{2}$. 
        However, this implies that $\mathbb{F}_{3,2} \subset \mathcal{H}[\{v,u,w,a,b\}]$ (see Figure~\ref{fig:F32-weak}), a contradiction.
    \end{proof}

    \begin{claim}\label{CLAIM:L1-empty}
        We have $L_1 = \emptyset$. 
    \end{claim}
    \begin{proof}
    Suppose to the contrary that there exists an edge $uw \in L_1$. 
    Note that $|L_{2}| \ge \delta n^2/8$ from Claim~\ref{CLAIM:L2-large}. 
    Choosing uniformly at random a pair $\{a,b\}$ from $\binom{V_2}{2}$, we obtain   
        \begin{align*}
            \min\left\{\mathbb{P}\left[ab\in L_{\mathcal{H}}(u)\right], \mathbb{P}\left[ab\in L_{\mathcal{H}}(w)\right] \right\}
            \ge \frac{\delta(\mathcal{H}_0)}{\binom{|V_2|}{2}}
             > \frac{\left(\frac{4}{9}-2\zeta\right)\binom{n-1}{2}}{\binom{\left(\frac{2}{3}+\zeta^{1/2}\right)n}{2}} 
             > 1- 10\zeta^{1/2}, 
        \end{align*}
        and 
        \begin{align*}
            \mathbb{P}\left[ab\in L_2\right]
            = \frac{|L_2|}{\binom{|V_2|}{2}}
            > \frac{\delta n^2/8}{\binom{\left(\frac{2}{3}+\zeta^{1/2}\right)n}{2}} 
            > \frac{\delta}{8}. 
        \end{align*}
        So it follows from the Union Bound that 
        \begin{align*}
            \mathbb{P}\left[ab\in L_2 \cap L_{\mathcal{H}}(u) \cap L_{\mathcal{H}}(w)\right]
            > 1 - \left(10\zeta^{1/2}+10\zeta^{1/2}+1-\frac{\delta}{8}\right)
            >0. 
        \end{align*}
        Hence, there exists an edge $ab\in L_2 \cap L_{\mathcal{H}}(u) \cap L_{\mathcal{H}}(w)$. 
        However, this implies that $\mathbb{F}_{3,2} \subset \mathcal{H}[\{v,u,w,a,b\}]$ (see Figure~\ref{fig:F32-weak}), a contradiction. 
    \end{proof}

\begin{figure}[htbp]
\centering
\tikzset{every picture/.style={line width=1pt}} 

\begin{tikzpicture}[x=0.75pt,y=0.75pt,yscale=-1,xscale=1,line join=round]

\draw   (248.91,169.99) .. controls (232.62,169.91) and (219.56,141.19) .. (219.74,105.84) .. controls (219.92,70.5) and (233.27,41.91) .. (249.56,41.99) .. controls (265.85,42.08) and (278.91,70.8) .. (278.73,106.14) .. controls (278.55,141.49) and (265.2,170.08) .. (248.91,169.99) -- cycle ;
\draw   (360.87,221) .. controls (339.78,221.02) and (322.66,175.95) .. (322.63,120.35) .. controls (322.6,64.75) and (339.68,19.66) .. (360.77,19.65) .. controls (381.86,19.64) and (398.98,64.71) .. (399.01,120.31) .. controls (399.04,175.91) and (381.97,220.99) .. (360.87,221) -- cycle ;
\draw  [fill=uuuuuu, fill opacity=0.5]   (239,132) .. controls (294,143) and (331,110) .. (354,88) .. controls (309,138) and (268,178)  .. (260,235) .. controls (261,184) and (271,154) .. (239,132) -- cycle;
\draw   [fill={rgb:red,1;green,0;blue,0}, fill opacity=0.5]  (379,162) .. controls (360,170) and (353,181) .. (352,199) .. controls (333,157) and (278.92,159.98)  .. (239,132) .. controls (278.92,159.98) and (361.65,167.2) .. (379,162) -- cycle;
\draw   [fill=uuuuuu, fill opacity=0.5]   (260,235) .. controls (307,184) and (330,149) .. (354,88) .. controls (347,121) and (331,166)  .. (352,199) .. controls (314,192) and (313,193) .. (260,235) -- cycle;

\draw   [fill=uuuuuu, fill opacity=0.5]   (260,235) .. controls (307,184) and (330,149) .. (354,88) .. controls (355,123) and (356,137) .. (379,162)  .. controls  (342,141) and (313,193) .. (260,235) -- cycle;
\draw [fill=uuuuuu] (379,162) circle (2pt);
\draw [fill=uuuuuu] (352,199) circle (2pt);
\draw [fill=uuuuuu]  (239,132) circle (2pt);
\draw [fill=uuuuuu] (260,235) circle (2pt);
\draw [fill=uuuuuu] (354,88)  circle (2pt);
\draw (255,242) node [anchor=north west][inner sep=0.75pt]   [align=left] {$v$};
\draw (230,114) node [anchor=north west][inner sep=0.75pt]   [align=left] {$u$};
\draw (347,70) node [anchor=north west][inner sep=0.75pt]   [align=left] {$w$};
\draw (359.13,195) node [anchor=north west][inner sep=0.75pt]   [align=left] {$a$};
\draw (377,142) node [anchor=north west][inner sep=0.75pt]   [align=left] {$b$};

\end{tikzpicture}

\caption{Finding $\mathbb{F}_{3,2}$ when $L_1 = \emptyset$.}
\label{fig:F32-weak-b}
\end{figure}
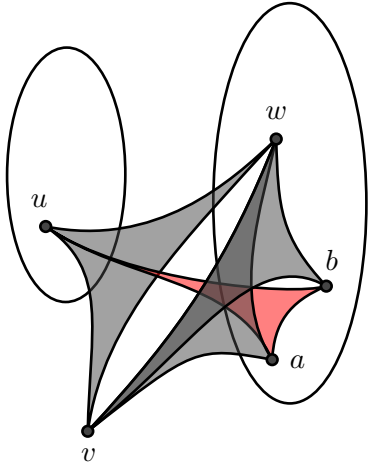

    Let us define 
    \begin{align*}
        U_1:=\left\{v'\in V_2 \colon |N_{L}(v') \cap V_1| \ge \frac{\delta}{16}n\right\}
        \quad\text{and}\quad
        U_2:=\left\{v'\in V_2 \colon |N_{L}(v') \cap V_2| \ge \frac{\delta}{16}n\right\}. 
    \end{align*}
    It follows from  
    \begin{align*}
    \left(\frac{1}{3}+\zeta^{1/2}\right)n |U_1|
        \ge \sum_{v'\in U_1}|N_{L}(v') \cap V_1|
        \ge |L_{1,2}| - \frac{\delta}{16}n |V_2\setminus U_1|
        \ge |L_{1,2}| - \frac{\delta}{16}n^2
    \end{align*}
    and 
    \begin{align*}
    \left(\frac{2}{3}+\zeta^{1/2}\right)n |U_2|
        \ge \sum_{v'\in U_2}|N_{L}(v') \cap V_2|
        \ge 2|L_{2}| - \frac{\delta}{16}n |V_2\setminus U_2|
        \ge 2|L_{2}| - \frac{\delta}{16}n^2  
    \end{align*}
    that 
    \begin{align*}
    |U_1|+|U_2|
    & \ge \frac{|L_{1,2}| - \frac{\delta}{16}n^2}{\left(\frac{1}{3}+\zeta^{1/2}\right)n} 
    + \frac{2|L_{2}| - \frac{\delta}{16}n^2}{\left(\frac{2}{3}+\zeta^{1/2}\right)n} \\
    & \ge \frac{|L_{1,2}| - \frac{\delta}{16}n^2 + |L_{2}| - \frac{\delta}{16}n^2}{\left(\frac{1}{3}+\zeta^{1/2}\right)n} \\
    & = \frac{|L| - \frac{\delta}{8}n^2}{\left(\frac{1}{3}+\zeta^{1/2}\right)n} 
    \ge \frac{\left(\frac{2}{9}+\frac{\delta}{4}\right)n^2 - \frac{\delta}{8}n^2}{\left(\frac{1}{3}+\zeta^{1/2}\right)n} 
     \ge \left(\frac{2}{3}+\frac{\delta}{8}\right)n. 
    \end{align*} 
    So it follows from (\ref{equ:size-V1-V2-F32}) that $|U_1 \cap U_2| \ge |U_1|+|U_2| - |V_2| \ge \frac{\delta}{16}n$. 

    Fix a vertex $w\in U_1 \cap U_2$ and a vertex $u\in N_{L}(w) \cap V_1$.
    Let $V_2' = N_{L}(w) \cap V_2$. 
    Since $|V_2'| \ge \frac{\delta}{16}n$, similar to (\ref{equ:F32-001}), there exists an edge $ab\in L_{\mathcal{H}}(u) \cap \binom{V_2'}{2}$. 
    However, this implies that $\mathbb{F}_{3,2} \subset \mathcal{H}[\{v,u,w,a,b\}]$ (see Figure~\ref{fig:F32-weak-b}), a contradiction.
    This completes the proof of Theorem~\ref{THM:weak-vte-extend-F32}. 
\end{proof}

\section{Concluding remarks}\label{SEC:Remarks}
By a small modification of the proof, 
one can easily extend Theorems~\ref{THM:Main-simple} and~\ref{THM:main-rainbow} to vertex-disjoint union of different hypergraphs as follows (here we omit the statement for the rainbow version). 

\begin{theorem}\label{THM:diff-disjoint-Turan}
    Let $m \ge r \ge 2, k \ge 1$ be integers and let $F_1, \ldots, F_{k}$ be nondegenerate $r$-graphs on at most $m$ vertices. 
    Suppose that there exists a constant $c > 0$ such that for all $i\in [k]$ and large $n \colon$
    \begin{enumerate}[label=(\alph*)]
    \item
        $F_i$ is $\left(c\binom{n}{r-1}, \frac{1-\pi(F)}{4m}\binom{n}{r-1}\right)$-bounded, and 
    \item
        $\mathrm{ex}(n, F_i)$ is $\frac{1-\pi(F)}{8m}\binom{n}{r-1}$-smooth. 
    \end{enumerate}
    Then there exist constant $N_0$ such that for all integers $n \ge N_0$ and $t_1, \ldots, t_k \in \mathbb{N}$ with $t+1:= \sum_{i=1}^{k}t_i \in [0, \varepsilon n]$, where $\varepsilon = \min\left\{\frac{c}{4erm}, \frac{1-\pi(F_1)}{64rm^2}, \ldots, \frac{1-\pi(F_k)}{64rm^2}\right\}$, we have 
    \begin{align*}
        \mathrm{ex}\left(n, \bigsqcup_{i=1}^{k}t_iF_i\right)
        \le \binom{n}{r}-\binom{n-t}{r} + \max_{i\in [k]}\left\{\mathrm{ex}(n-t, F_i)\right\}. 
    \end{align*}
    Moreover, if $\max_{i\in [k]} \mathrm{ex}(n-t, F_i)  = \mathrm{ex}(n, \{F_1, \ldots, F_k\})$, then the inequality above can be replace by equality. 
\end{theorem}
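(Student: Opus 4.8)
The plan is to transcribe the proof of Theorem~\ref{THEOREM:main-techincal} almost verbatim, with $\bigsqcup_{i=1}^{k} t_i F_i$ playing the role of $(t+1)F$ and $t:=\bigl(\sum_{i}t_i\bigr)-1$. Organize $\bigsqcup_i t_i F_i$ into $t+1$ ordered \emph{slots}, slot $j$ being required to receive a copy of $F_{\phi(j)}$, where $\phi\colon[t+1]\to[k]$ takes the value $i$ exactly $t_i$ times; list the slots so that $\pi(F_{\phi(j)})$ is nondecreasing in $j$. The key structural fact that makes this a ``small modification'' is that every ingredient of Section~\ref{SEC:Proof-Main} --- boundedness, smoothness, Lemma~\ref{LEMMA:Case1-small-maxdeg-rainbow}, Claim~\ref{CLAIM:F-avoid-Bi} --- is applied to a \emph{single} hypergraph at a time, and each $F_i$ satisfies the required hypotheses individually.

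First I would establish a multi-species version of Lemma~\ref{LEMMA:Case1-small-maxdeg-rainbow}: if $\mathcal{G}$ has $N$ vertices, $\Delta(\mathcal{G})$ below the appropriate threshold, and $|\mathcal{G}|\ge\max_i\mathrm{ex}(N-s,F_i)+s\binom{N-s}{r-1}$, then $\mathcal{G}$ contains $\bigsqcup_i s_i F_i$ for every prescribed multiplicity vector with $\sum_i s_i=s+1$. The proof is the greedy-packing argument of Lemma~\ref{LEMMA:Case1-small-maxdeg-rainbow} run through the ordered slots: take a maximal partial packing $\{S_1,\dots,S_j\}$ with $j\le s$, note that $\mathcal{G}\bigl[V(\mathcal{G})\setminus\bigcup_\ell S_\ell\bigr]$ is $F_{\phi(j+1)}$-free, and bound $|\mathcal{G}|\le\mathrm{ex}(N-b,F_{\phi(j+1)})+b\,\Delta(\mathcal{G})$ with $b=\bigl|\bigcup_\ell S_\ell\bigr|\le ms$. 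Because the \emph{same} species $F_{\phi(j+1)}$ appears in this upper bound and, via $\max_i\mathrm{ex}(N-s,F_i)\ge\mathrm{ex}(N-s,F_{\phi(j+1)})$, in the lower bound, the $\Delta_1,\Delta_2$ estimates of Lemma~\ref{LEMMA:Case1-small-maxdeg-rainbow} go through unchanged for $F_{\phi(j+1)}$ (using its smoothness, and $b\le ms$ exactly as $b=mk\le mt$ there), yielding a contradiction.

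Then I would run the argument of Theorem~\ref{THEOREM:main-techincal}~\ref{lab:tech-Turan}. Given $\mathcal{H}$ with $|\mathcal{H}|>\binom{n}{r}-\binom{n-t}{r}+\max_i\mathrm{ex}(n-t,F_i)$, let $L$ be a set of ``heavy'' vertices; a species-aware form of Claim~\ref{CLAIM:F-avoid-Bi}, using only the boundedness of the relevant $F_i$ together with $|\mathcal{H}|>\binom{n}{r}-\binom{n-t}{r}+\mathrm{ex}(n-t,F_i)$, shows that a heavy vertex can host a copy of $F_i$ inside $\mathcal{H}[V\setminus B]$ for any $B$ with $v\notin B$ and $|B|\le 2mt$; iterating over the slots as in Claim~\ref{CLAIM:ell<t+1} forces $\ell:=|L|\le t$. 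Setting $\mathcal{H}_0=\mathcal{H}[V\setminus L]$, $N=n-\ell$, $s=t-\ell$ one has $N-s=n-t$, $\Delta(\mathcal{H}_0)$ below the threshold, and $|\mathcal{H}_0|>\binom{N}{r}-\binom{N-s}{r}+\max_i\mathrm{ex}(N-s,F_i)$; a choice of residual multiplicities $(s_i)$ with $s_i\le t_i$ and $\sum_i s_i=s+1$ (the high-density species going into $\mathcal{H}_0$ and the low-density ones onto $L$) lets the multi-species lemma produce $\bigsqcup_i s_i F_i$ inside $\mathcal{H}_0$, while the $\ell$ heavy vertices host $\ell$ further disjoint copies realizing the multiplicities $t_i-s_i$; together this is $\bigsqcup_i t_i F_i\subseteq\mathcal{H}$, proving the inequality. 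For the equality statement, $K^r_t\uproduct\mathcal{G}$ with $\mathcal{G}\in\mathrm{EX}\bigl(n-t,\{F_1,\dots,F_k\}\bigr)$ has the claimed number of edges and is $\bigsqcup_i t_i F_i$-free (in $t+1$ pairwise disjoint copies the $t$ vertices of $K^r_t$ meet at most $t$ of them, so one copy lies inside $\mathcal{G}$, contradicting its $\{F_1,\dots,F_k\}$-freeness), and under the stated hypothesis this matches the upper bound.

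The main obstacle is choosing the heavy-vertex threshold so that it is simultaneously high enough for the species-aware Claim~\ref{CLAIM:F-avoid-Bi} and low enough for the multi-species lemma applied to $\mathcal{H}_0$. A single threshold working for all $F_i$ at once need not exist when the $\pi(F_i)$ differ by more than $\Theta(1/m)$; this is precisely why the slots and the cheap/expensive split are organized by increasing Tur\'an density --- heavy vertices are asked to host only the low-density species and $\mathcal{H}_0$ only the high-density ones, so the interval $\bigl[d(n-t,F_i)+\tfrac{1-\pi(F_i)}{4m}\binom{n-t}{r-1},\,d(n-t,F_{i'})+\tfrac{1-\pi(F_{i'})}{2m}\binom{n-t}{r-1}\bigr]$ one needs to hit is nonempty for every low-density $F_i$ and high-density $F_{i'}$, since then $\pi(F_i)\le\pi(F_{i'})$. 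Checking that the cardinalities also match up (that $|L|$ does not exceed the number of low-density copies, for which one tunes the threshold, exploiting that $|L|$ is monotone in it) and re-verifying every error term of Section~\ref{SEC:Proof-Main} against its worst case over $i\in[k]$ --- which forces $\varepsilon$ to be the minimum appearing in the statement --- is the bulk of the otherwise routine work.
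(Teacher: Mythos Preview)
The paper does not give a proof of this theorem; it only asserts that ``a small modification of the proof'' of Theorems~\ref{THM:Main-simple} and~\ref{THM:main-rainbow} suffices. Your outline is essentially correct and, crucially, you have identified the one genuine new difficulty: when the $\pi(F_i)$ are far apart, no single heavy-vertex threshold can be simultaneously low enough for the multi-species Lemma~\ref{LEMMA:Case1-small-maxdeg-rainbow} (applied in $\mathcal{H}_0$ to low-density species) and high enough for the species-aware Claim~\ref{CLAIM:F-avoid-Bi} (forcing heavy vertices to host high-density species). Your density-ordering idea is the right remedy.

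Where your write-up is shaky is the ``tuning'' step. With a single threshold $\tau$ you must arrange that $|L(\tau)|$ equals the number $\ell$ of low-density slots \emph{and} that $\tau$ lies in the interval $[g(\pi(F_{\phi(\ell)})),\,h(\pi(F_{\phi(\ell+1)}))]$ determined by that same $\ell$; the monotonicity you cite does not by itself rule out that $|L(\tau_\ell)|-\ell$ jumps over $0$ as $\ell$ varies, and when it does the leftover low-density slots land in $\mathcal{H}_0$ where your max-degree bound is too weak for them. This can be repaired, but there is a cleaner route that is almost certainly what the paper has in mind, since they explicitly flag a rainbow version: carry the \emph{slot-dependent} threshold directly into the definition of the heavy set. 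That is, set $\tau_j:=d(n-t,F_{\phi(j)})+\tfrac{1-\pi(F_{\phi(j)})}{2m}\binom{n-t}{r-1}$ for each slot $j$, and let $L$ together with an injection $f\colon L\to[t+1]$ be a \emph{maximum} ``heavy-rainbow'' set, i.e.\ a maximum matching in the bipartite graph where $v$ is joined to $j$ iff $d_{\mathcal{H}}(v)\ge\tau_j$. Then for each matched pair $(v,f(v))$ the heavy vertex satisfies the hypothesis of the species-aware Claim~\ref{CLAIM:F-avoid-Bi} for exactly the species $F_{\phi(f(v))}$ it must host (since $h(x)>g(x)$ with $\Theta(1/m)$ slack); and for every \emph{unmatched} slot $j^\ast$, maximality of the matching forces $d_{\mathcal{H}}(v)<\tau_{j^\ast}$ for all $v\notin L$, so $\Delta(\mathcal{H}_0)<\tau_{j^\ast}$ is precisely the max-degree bound the greedy step of your multi-species lemma needs when it stalls at slot $j^\ast$. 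No tuning or cardinality bookkeeping is required. The bound $|L|\le t$ and the final equality statement go through exactly as you wrote.
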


Recall that Allen, B\"{o}ttcher, Hladk\'{y}, and Piguet~\cite{ABHP15} determined, for large $n$, the value of $\mathrm{ex}(n, (t+1)K_3)$ for all $t\le n/3$. 
Considering that the situation is already very complicated for $K_3$, 
the following question seems very hard in general. 

\begin{problem}\label{PROB:Corradi-Hajnal-general}
Let $r \ge 2$ be an integer and $F$ be a nondegenerate $r$-graph with $m$ vertices. 
For large $n$ determine $\mathrm{ex}(n, (t+1)F)$ for all $t \le n/m$. 
\end{problem}

A first step towards a full understanding of Problem~\ref{PROB:Corradi-Hajnal-general} would be determining the regime of $t$ in which members in
$K_{t}^r \uproduct \mathrm{EX}(n-t, F)$ are extremal. 
Here we propose the following question, which seems feasible for many hypergraphs (including graphs). 

\begin{problem}\label{PROB:Corradi-Hajnal-general-phase-1}
Let $r \ge 2$ be an integer and $F$ be an $r$-graph with $m$ vertices. 
For large $n$ determine the maximum value of $s(n, F)$ such that 
\begin{align*}
    \mathrm{ex}(n, (t+1)F) 
    = \binom{n}{r}-\binom{n-t}{r} +\mathrm{ex}(n-t, F)
\end{align*}
holds for all $t \in [0, s(n,F)]$. 
\end{problem}

Understanding the asymptotic behavior of $s(n, F)$ would be also very interesting. 

\begin{problem}\label{PROB:Corradi-Hajnal-general-phase-1-asymp}
Let $r \ge 2$ be an integer and $F$ be an $r$-graph with $m$ vertices. 
Let $s(n, F)$ be the same as in Problem~\ref{PROB:Corradi-Hajnal-general-phase-1}. 
Determine the value of $\liminf_{n\to \infty} \frac{s(n,F)}{n}$. 
\end{problem}

Note that the result of Allen, B\"{o}ttcher, Hladk\'{y}, and Piguet~\cite{ABHP15} implies that $s(n, K_3) = \frac{2n-6}{9}$ for large $n$. In particular, $\lim_{n\to \infty} \frac{s(n,K_3)}{n} = \frac{2}{9}$. 

It would be also interesting to consider extensions of the density Corr\'{a}di--Hajnal Theorem to degenerate hypergraphs such as complete $r$-partite $r$-graphs and even cycles. 
The behavior for degenerate hypergraphs seems very different from nondegenerate hypergraphs, and we refer the reader to e.g.~{\cite[Theorem~1.3]{LMH23}} for related results on even cycles.  

\bibliographystyle{abbrv}
\bibliography{vertexdisjoint}
\end{document}